\newtheorem{theorem}{Theorem}[section]
\newtheorem{lemma}[theorem]{Lemma}
\newtheorem{proposition}[theorem]{Proposition}
\newtheorem{corollary}[theorem]{Corollary}
\theoremstyle{definition}
\newtheorem{example}[theorem]{Example}
\newtheorem{construction}[theorem]{Construction}
\newtheorem{remark}[theorem]{Remark}
\theoremstyle{remark}
\numberwithin{equation}{section}
\def\quot{/\!\!/}
\def\bangle#1{\langle #1 \rangle}
\def\sei{\mathrel{\mathop:}=}
\def\KK{{\mathbb K}}
\def\ZZ{{\mathbb Z}}
\def\QQ{{\mathbb Q}}
\def\PP{{\mathbb P}}
\def\Eff{{\rm Eff}}
\def\Mov{{\rm Mov}}
\def\Ample{{\rm Ample}}
\def\SAmple{{\rm SAmple}}
\def\BPF{{\rm BPF}}
\def\cov{{\rm cov}}
\def\Cl{\operatorname{Cl}}
\def\Pic{\operatorname{Pic}}
\def\Spec{{\rm Spec}}
\def\Proj{{\rm Proj}}
\def\conv{{\rm conv}}
\def\cone{{\rm cone}}
\def\lin{{\rm lin}}
\def\pr{{\rm pr}}
\def\cov{{\rm cov}}
\def\relint{{\rm relint}}
\def\tm{{\tau^-}}
\def\tp{{\tau^+}}
\subjclass[2010]{14J10,14J45,14C20}
\begin{document}
\title[On intrinsic quadrics]%
{On intrinsic quadrics}
\author[A.~Fahrner]{Anne Fahrner}
\address{Mathematisches Institut, Universit\"at T\"ubingen,
Auf der Morgenstelle 10, 72076 T\"ubingen, Germany}
\thanks{Supported by the Carl-Zeiss-Stiftung}
\email{fahrner@math.uni-tuebingen.de}
\author[J.~Hausen]{J\"urgen Hausen}
\address{Mathematisches Institut, Universit\"at T\"ubingen,
Auf der Morgenstelle 10, 72076 T\"ubingen, Germany}
\email{juergen.hausen@uni-tuebingen.de}

\begin{abstract}
An intrinsic quadric is a normal projective
variety with a Cox ring defined by a single 
quadratic relation.
We provide explicit descriptions of these 
varieties in the smooth case for 
small Picard numbers.
As applications we figure out 
in this setting the Fano examples 
and (affirmatively) test Fujita's 
freeness conjecture.
\end{abstract}

\maketitle

\section{Introduction}

Intrinsic quadrics have been introduced 
in~\cite{BeHa:2007} 
as an example class of normal, projective algebraic 
varieties that are accessible by elementary 
combinatorial methods similar to toric varieties.
Recall that the normal projective toric 
varieties~$X$ are characterized by the property 
that their divisor class group $\Cl(X)$ is finitely
generated and their Cox~ring 
$$ 
\mathcal{R}(X)  
\ = \ 
\bigoplus_{\Cl(X)} \Gamma(X,\mathcal{O}_X(D))
$$
is a polynomial ring. An \emph{intrinsic quadric} 
is by definition a normal projective variety~$X$ 
with finitely generated divisor class 
group $\Cl(X)$ and a finitely generated Cox ring 
admitting homogeneous generators such that the 
associated ideal of relations is generated 
by a single, purely quadratic polynomial.  
In that sense, studying intrinsic quadrics
is a quite controled step beyond toric geometry.
Some well known non-toric examples are 
the usual smooth quadrics $X \subseteq \PP_n$ 
for $n \ge 4$ and several cubic surfaces 
in $\PP_3$.
We refer to~\cite{Bo2} for a sample use
of intrinsic quadrics as a testing class.

In the present article, we take a closer 
look at smooth intrinsic quadrics 
of small Picard number, but arbitrarily 
high dimension.
For toric varieties, the analogous idea 
has been pursued 
by Kleinschmidt~\cite{Kl} in Picard number two 
and by Batyrev~\cite{Ba} in Picard number three.
Moreover, in~\cite{FaHaNi}, we described 
all smooth, rational varieties of Picard 
number two that come with a
torus action of complexity one.
Similarly to the toric setting, where 
the restriction of being smooth of 
Picard number one allows just the 
projective spaces, the situation 
turns out to be simple for intrinsic 
quadrics: in Picard number 
one, we only find the classical smooth 
quadrics $X \subseteq \PP_n$, 
see Proposition~\ref{prop:picardnumber1}.
In Picard number two, we obtain a 
considerably larger class. 
The first main result of the paper provides
a full description of these varieties~$X$ 
in terms of their $\Cl(X)$-graded Cox ring and 
the semiample cone $\tau_X \subseteq \Cl_\QQ(X)$;
this collection of data indeed fixes $X$,
see Section~\ref{sec:quadrbasics} for a brief reminder
and~\cite{ArDeHaLa} for more background.

\begin{theorem}
\label{thm:smoothrhoX2}
Let $X$ be a smooth intrinsic quadric of 
Picard number $\rho(X)=2$.
Then $X$ has divisor class group 
$\Cl(X) \cong \ZZ^2$ 
and, with suitable integers $n,m \in \ZZ_{\ge 0}$,
the Cox ring of $X$ is given by
\begin{eqnarray*}
\mathcal{R}(X) 
& \cong & 
\KK[T_1,\ldots,T_n,S_1, \ldots, S_m] / \bangle{g},
\\[1ex]
g  
& = & 
\begin{cases}
T_1T_2 + \ldots + T_{n-1}T_n, & n \text{ even},
\\
T_1T_2 + \ldots + T_{n-2}T_{n-1} + T_n^2, & n \text{ odd}.
\end{cases}
\end{eqnarray*}
The possible constellations for the $\Cl(X)$-grading 
of $\mathcal{R}(X)$
and the semiample cone $\tau_X \subseteq \Cl_\QQ(X)$ are listed below;
we distinguish four types and write
$w_i \sei \deg(T_i)$ and $u_j \sei \deg(S_j)$ for the 
$\Cl(X)$-degrees.

\medskip
\noindent
\emph{Type~1:}
Fix $\alpha \in \ZZ_{\ge 0}$.
We have $n \ge 5$ and $m \ge 2$.
Moreover, $w_1 = \ldots = w_n = (1,0)$
and $u_j = (a_j,1)$ with 
$0 = a_1 \le a_2 \le \ldots \le a_m = \alpha$ holds.

\begin{center}
    \begin{tikzpicture}[scale=0.6]
\draw[thin] (0,0)--(5,1);
\path[fill=gray!60!] (0,0)--(5,0)--(5,1)--(0,0);
\path[fill, color=black] (3.7,0.4) circle (0.0ex)  node[right]{\small $\tau_X$};
\draw (-1,0)--(6,0);
\draw (0,-1)--(0,2);
\path[fill, color=black] (1,0) circle (0.5ex)  node[below]{\small $w_i$};
\path[fill, color=black] (0,1) circle (0.5ex)  node[left]{\small $(0,1)$}; 
\path[fill, color=black] (1,1) circle (0.5ex)  node[]{};

\path[fill, color=black] (2.1,1) circle (0.3ex)  node[]{};
\path[fill, color=black] (2.5,1) circle (0.3ex)  node[]{};
\path[fill, color=black] (2.9,1) circle (0.3ex)  node[]{};

\path[fill, color=black] (4,1) circle (0.5ex)  node[]{};
\path[fill, color=black] (5,1) circle (0.5ex)  node[right]{\small $(\alpha,1)$};
\end{tikzpicture}   
\end{center}
Here, $X$ is the projectivization 
$\PP(\mathcal{O}_Y(a_1) \oplus \mathcal{O}_Y (a_2)
\oplus\ldots\oplus
\mathcal{O}_Y(a_m))$
of the split vector bundle 
defined by $a_1,\ldots,a_m$ 
over the smooth quadric 
$Y = V(g) \subseteq \PP_{n-1}$.

\medskip
\noindent
\emph{Type~2:} 
Fix $\alpha \in \ZZ_{\ge 0}$.
We have $n \ge 5$ and $m \ge 2$.
Moreover, $u_1 = \ldots = u_m = (1,0)$
holds and we have $w_i = (a_i,1)$ 
with $0 \le a_i \le \alpha$
for $i = 1,\ldots,n$ such that  
\begin{enumerate}
\item
$w_1=(0,1)$ and $w_2=(\alpha,1)$, 
\item
$w_i+w_{i+1} = (\alpha, 2)$ for all odd $i < n$ and 
$2w_n = (\alpha, 2)$, if $n$ is odd.
\end{enumerate}
\begin{center}
    \begin{tikzpicture}[scale=0.6]
\draw[thin] (0,0)--(5,1);
\path[fill=gray!60!] (0,0)--(5,0)--(5,1)--(0,0);
\path[fill, color=black] (3.7,0.4) circle (0.0ex)  node[right]{\small $\tau_X$};
\draw (-1,0)--(6,0);
\draw (0,-1)--(0,2);
\path[fill, color=black] (1,0) circle (0.5ex)  node[below]{\small $u_j$};
\path[fill, color=black] (0,1) circle (0.5ex)  node[left]{\small $w_1=(0,1)$};
\path[fill, color=black] (1,1) circle (0.5ex)  node[]{};

\path[fill, color=black] (2.1,1) circle (0.3ex)  node[]{};
\path[fill, color=black] (2.5,1) circle (0.3ex)  node[]{};
\path[fill, color=black] (2.9,1) circle (0.3ex)  node[]{};

\path[fill, color=black] (4,1) circle (0.5ex)  node[]{};
\path[fill, color=black] (5,1) circle (0.5ex)  node[right]{\small $(\alpha,1)=w_2$};
\end{tikzpicture}   
\end{center}
Here, $X$ admits a locally trivial fibration 
$X \to \PP_{m-1}$ with fibers isomorphic to
the smooth quadric~$V(g) \subseteq \PP_{n-1}$.

\medskip
\noindent
{\em Type~3:} 
We have $n \ge 5$ and $m \ge 1$.
Moreover, $u_1 = \ldots = u_m = (1,0)$
holds and the $w_i$ satisfy
\begin{enumerate}
\item
$w_1 = (0,1)$ and $w_2 = (2,1)$,
\item
$w_2 = \ldots = w_{n-1} = (1,1)$.
\end{enumerate}
\begin{center}
    \begin{tikzpicture}[scale=0.6]
\draw (-1,0)--(6,0);
\draw (0,-1)--(0,2.5);
\draw[thin] (0,0)--(5,2.5);
\draw[thin] (0,0)--(2.5,2.5);
\path[fill=gray!60!] (0,0)--(2.5,2.5)--(5,2.5)--(0,0);
\path[fill, color=black] (1.9,1.7) circle (0.0ex)  node[right]{\small $\tau_X$};
\path[fill, color=black] (1,0) circle (0.5ex)  node[below]{\small $u_j$};
\path[fill, color=black] (2,1) circle (0.5ex)  node[below]{\small $w_2$};
\path[fill, color=black] (0,1) circle (0.5ex)  node[left]{\small $w_1$};
\path[fill, color=black] (1,1.7) circle (0ex)  node[above]{\small 
$\begin{array}{c}
w_i,\\[-.2em]
\scriptstyle{i \ge 3}
\end{array}$};
\draw[->] (1,1.8) -- (1,1.2);
\path[fill, color=black] (1,1) circle (0.5ex)  node[]{};
\path[fill, color=black] (2,1) circle (0.5ex)  node[]{};
\end{tikzpicture}   
\end{center}
Here, $X$ is the blowing-up of the projective space $\PP_{n+m-3}$ 
centered at the smooth quadric 
$V(g-T_1T_2, S_1,\ldots,S_m) \subseteq \PP_{n+m-3}$.

\medskip
\noindent
\emph{Type~4:} 
Fix $0 \le a \le \alpha \in \ZZ$.
We have $n \ge 6$ with $n$ even 
and $m \ge 0$. 
Moreover, $u_j = (a_j,1)$ holds
with $0 \le a_j \le \alpha$
and
\begin{enumerate}
\item
$w_1 = w_3 = \ldots = w_{n-1} = (1,0)$,
\item
$w_2 = w_4 = \ldots = w_n = (a,1)$,
\item
the vectors $(\alpha,1)$ and $(0,1)$ occur 
among $w_1,\ldots,w_n,u_1,\ldots,u_m$.
\end{enumerate}
\begin{center}
\begin{tikzpicture}[scale=0.6]
\draw[thin] (0,0)--(7,1);
\path[fill=gray!60!] (0,0)--(7,0)--(7,1)--(0,0);
\path[fill, color=black] (5.7,0.4) circle (0.0ex)  node[right]{\small $\tau_X$};
\draw (-1,0)--(8,0);
\draw (0,-1)--(0,2);
\path[fill, color=black] (1,0) circle (0.5ex)  node[below]{\small $\begin{array}{c}
w_i,\\[-.2em]
\scriptstyle{i \text{ odd}}
\end{array}$};
\path[fill, color=black] (4,1.55) circle (0ex)  node[above]{\small 
$\begin{array}{c}
w_i,\\[-.2em]
\scriptstyle{i \text{ even}}
\end{array}$};
\draw[->] (4,1.7) -- (4,1.2);
\path[fill, color=black] (0,1) circle (0.5ex)  node[left]{\small $(0,1)$};
\path[fill, color=black] (1,1) circle (0.5ex)  node[]{};

\path[fill, color=black] (2.1,1) circle (0.3ex)  node[]{};
\path[fill, color=black] (2.5,1) circle (0.3ex)  node[]{};
\path[fill, color=black] (2.9,1) circle (0.3ex)  node[]{};

\path[fill, color=black] (4,1) circle (0.5ex)  node[]{};

\path[fill, color=black] (5.1,1) circle (0.3ex)  node[]{};
\path[fill, color=black] (5.5,1) circle (0.3ex)  node[]{};
\path[fill, color=black] (5.9,1) circle (0.3ex)  node[]{};

\path[fill, color=black] (7,1) circle (0.5ex)  node[right]{\small $(\alpha,1)$};
\end{tikzpicture}   
\end{center}
Here, $X$ admits a locally trivial fibration 
$X \to \PP_{n/2-1}$
with fibers isomorphic to the projective space~$\PP_{n/2+m-2}$.

\medskip
\noindent
Conversely, each of the above constellations in Types~1
to~4 defines a smooth intrinsic quadric of Picard number~2.
\end{theorem}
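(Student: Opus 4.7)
The plan is to decompose the proof into three stages: bring the quadratic relation into standard form, classify the admissible $\Cl(X)$-graded structures together with their semiample cones, and identify the resulting varieties geometrically.

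First I would take an arbitrary presentation of the Cox ring by homogeneous generators with a single quadratic relation, and partition the generators into those appearing in the relation (the $T_i$) and those that do not (the $S_j$). The quadratic form decomposes according to bidegrees under the $\Cl(X)$-grading, and within each bidegree the diagonalization of quadratic forms over an algebraically closed field can be performed by a change of variables that mixes only generators of the same degree, hence is compatible with the grading. Smoothness of $X$ rules out degenerate variables in the relation, leaving the asserted hyperbolic-plane normal form, with a single square $T_n^2$ precisely when $n$ is odd. Writing $w_i \sei \deg(T_i)$ and $u_j \sei \deg(S_j)$, homogeneity of $g$ forces $w_{2k-1}+w_{2k}=\mu$ for every pair and $2w_n=\mu$ in the odd case, where $\mu=\deg(g)$. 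Smoothness together with $\rho(X)=2$ also rules out torsion in $\Cl(X)$, so $\Cl(X) \cong \ZZ^2$.

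Next I would run a case analysis on the configuration of degrees inside $\Cl_\QQ(X) \cong \QQ^2$. The paired $w_i$ all lie on an affine line, so the effective cone is spanned by at most three rays and the semiample cone $\tau_X$ is a two-dimensional GIT chamber. The smoothness criterion from the Cox construction recalled in Section~\ref{sec:quadrbasics} requires that the ray generators on the two edges of $\tau_X$ complete to a $\ZZ$-basis of $\Cl(X)$. I would split the analysis according to whether all $w_i$ coincide, which forces them to equal $\mu/2$ and the $u_j$ to sit on a parallel line, giving Type~1, or whether the $w_i$ split into genuinely different degrees. In the latter case the effective cone has a $u_j$-ray as one edge, and the $w_i$-ray structure combined with the primitivity requirement pins down either the linearly spread arrangement of Type~2, the balanced alternation $(1,0),(a,1)$ of Type~4, or the exceptional low-rank blow-up configuration of Type~3. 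The main obstacle will be verifying that this list is exhaustive and non-overlapping; the key leverage is that $\rho(X)=2$ leaves only one projective degree of freedom for the pairing line, and the smoothness conditions on the maximal cells of $\tau_X$ rigidify the remaining combinatorics, including the bounds $n\ge 5$ respectively $n\ge 6$ in the different types.

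For the converse direction I would, for each type, exhibit the claimed geometric model and check that its $\Cl(X)$-graded Cox ring and semiample cone match the data. Type~1 is realized as the split $\PP^{m-1}$-bundle over the smooth quadric $V(g) \subseteq \PP_{n-1}$ by reading the $\mathcal{O}_Y(a_j)$ summands off the $u_j$; Type~2 is the mirror construction producing a quadric bundle over $\PP_{m-1}$; Type~3 is identified as the blow-up centered at $V(g-T_1T_2, S_1,\ldots,S_m) \subseteq \PP_{n+m-3}$ by contracting the divisor $\{T_1=0\}$; and Type~4 is a projective bundle over $\PP_{n/2-1}$ arising from the bipartition of the $w_i$ into two coincident groups. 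In each case the identification is verified by computing sections of a suitable semiample class and comparing with the standard Cox ring of the claimed geometric model, so the construction is inverse to the classification.
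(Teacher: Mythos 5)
Your overall strategy coincides with the paper's: graded normal form for the quadric, then a combinatorial case analysis of the degree configuration in $\Cl_\QQ(X)\cong\QQ^2$ driven by the smoothness criterion, then geometric identification of each type. However, the proposal has a genuine gap: the classification step, which is the entire content of the theorem, is only announced, and the organizing principle you propose for it is partly wrong. You acknowledge that ``the main obstacle will be verifying that this list is exhaustive and non-overlapping'' but offer no mechanism for doing so. Concretely: (a) your dichotomy ``all $w_i$ coincide (Type~1) versus the $w_i$ split, in which case the effective cone has a $u_j$-ray as one edge'' fails already for Type~4 with $m=0$, where there are no $S_j$ at all and both edges of $\Eff(X)$ are spanned by $w_i$'s; (b) you never isolate the structural fact that all $\deg(S_j)$ must lie on one side of the ample cone --- in the paper this follows because a face $\cone(e_i,e_j)$ with both indices free would be $X$-relevant yet violate condition~(b) of Proposition~\ref{prop:smooth}(v) --- and without it the case analysis does not reduce to the three configurations the paper works through; (c) the precise numerical outputs (e.g.\ $\mu=(1,1)$ forcing $n$ even in Type~4, the value $w_2=(2,1)$ in Type~3, the exclusion of $w_{i+1}=(-1,1)$ via the face $\gamma_{4,i}$) all come from determinant conditions on specific $X$-relevant two-faces, none of which appear in your sketch.

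Two further points need repair. First, ``smoothness together with $\rho(X)=2$ rules out torsion in $\Cl(X)$'' is not free: the paper's Proposition~\ref{prop:PicXtorsfree} proves it by exhibiting a two-dimensional $X$-relevant face, which itself requires a three-case analysis on the positions of the $S_j$-degrees and the elimination of a pathological $q=0$ configuration; you would need to supply this. Second, your description of the normal form (``diagonalization ... that mixes only generators of the same degree'') is inaccurate: the cross-terms $T_iT_j$ with $\deg T_i\ne\deg T_j$ cannot be diagonalized into squares, and the reduction to hyperbolic pairs $T_1T_2+\ldots$ is a graded Gaussian elimination across the paired degree blocks $w$ and $\mu-w$ as in Proposition~\ref{prop:quaddiag}; the conclusion (hyperbolic pairs plus at most one square once $\Cl(X)$ is torsion free) is right, but the stated reason is not. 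The converse direction and the geometric identifications in your third stage match the paper's Part~II and are fine as a plan.
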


We say that an intrinsic quadric is~\emph{full} 
if all generators of its Cox ring show up
in the relation.
The full intrinsic quadrics of 
Theorem~\ref{thm:smoothrhoX2}
are precisely the cases of Type~4 with $m=0$
and hence $\alpha=0$;
they have already been found in~\cite{BeHa:2007}
under the additional hypothesis of 
a torsion free divisor class group.
Moreover, the cases $n = 5$ and $n = 6$ in Types~1
to~4 of Theorem~\ref{thm:smoothrhoX2} are precisely 
the smooth intrinsic quadrics allowing a torus 
action of complexity one and thus represent exactly 
the overlap with~\cite{FaHaNi}.

Recall that a normal projective variety $X$ 
is Fano if it admits an ample anticanonical 
divisor.
More generally, $X$ is called almost Fano if 
it has a numerically effective anticanonical divisor; 
we say that $X$ is truly almost Fano if it is 
almost Fano but not Fano.
Theorem~\ref{thm:smoothrhoX2} gives us in 
every dimension the (almost) Fano smooth 
intrinsic quadrics of Picard number two.

\begin{corollary}
\label{cor:FanosAlmFanos}
In the notation of Theorem~\ref{thm:smoothrhoX2},
the (truly almost) Fano varieties among the 
smooth intrinsic quadrics $X$ of Picard 
number two are characterized by the following 
conditions.
\begin{center}
\renewcommand{\arraystretch}{1.8} 
\begin{tabular}{c|c|c}
Type
& 
Fano 
& 
truly almost Fano
\\
\hline
1
&
$m\alpha < n-2 +a_1 + \ldots + a_m$
&
$m\alpha = n-2 +a_1 + \ldots + a_m$
\\
\hline
2
&
$\frac{n-2}{2}\alpha < m$
&
$\frac{n-2}{2}\alpha = m$
\\
\hline
3
&
$n-2>m$
&
$n-2=m$
\\
\hline
4
&
$
\begin{array}{ll}
m \alpha < \frac{n-2}{2} + a_1 + \ldots + a_m
\\[-2ex]
\text{and } w_2=(\alpha,1)
\end{array}
$
&
$
\begin{array}{ll}
m \alpha = \frac{n-2}{2} + a_1 + \ldots + a_m
\\[-2ex]
\text{and } w_2=(\alpha,1)
\end{array}
$
\\
\hline
4
&
&
$
\begin{array}{ll}
u_1=\ldots= u_m=(1,1)
\\[-2ex]
\text{and } w_2=(0,1)
\end{array}
$
\end{tabular}
\end{center}
\end{corollary}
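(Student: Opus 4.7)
The plan is to reduce the assertion to explicit inequalities on $\Cl_\QQ(X) = \QQ^2$ for each Type, using two facts that are available once Theorem~\ref{thm:smoothrhoX2} is in hand. First, since $\mathcal{R}(X)$ is a hypersurface Cox ring presented by the quadric $g$, the anticanonical class is given by the adjunction-type formula
$$
-K_X \ = \ w_1 + \cdots + w_n + u_1 + \cdots + u_m - \deg(g) \ \in \ \Cl(X).
$$
Second, $X$ is a Mori dream space, so $\tau_X$ coincides with the nef cone, and $X$ is Fano iff $-K_X$ lies in the relative interior of $\tau_X$, while $X$ is truly almost Fano iff $-K_X$ lies on the boundary of $\tau_X$. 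The idea is thus to compute $\deg(g)$ and $-K_X$ explicitly in each Type and compare with the rays bounding $\tau_X$ as pictured in Theorem~\ref{thm:smoothrhoX2}.

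The $\deg(g)$ computations are immediate from the list of weights: in Type~1 one has $\deg(g)=(2,0)$, in Type~2 every monomial of $g$ has degree $(\alpha,2)$, in Type~3 every monomial has degree $(2,2)$, and in Type~4 every monomial $T_iT_{i+1}$ with $i$ odd has degree $(a+1,1)$. Summing the $w_i$ and $u_j$ using the pairing constraints of the respective Type (for instance, in Type~2 the pairing $w_i+w_{i+1}=(\alpha,2)$ yields $\sum w_i = (n\alpha/2,\,n)$) then produces closed expressions for $-K_X$, namely
$$
\bigl(n-2+\textstyle\sum a_j,\ m\bigr),\quad
\bigl(\tfrac{n-2}{2}\alpha + m,\ n-2\bigr),\quad
(n+m-2,\ n-2),\quad
\bigl(\tfrac{(a+1)(n-2)}{2}+\textstyle\sum a_j,\ \tfrac{n}{2}+m-1\bigr)
$$
in Types~1 through~4 respectively. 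Translating $-K_X \in \tau_X$ and $-K_X \in \tau_X^\circ$ into inequalities via the two bounding rays read off the pictures — $(1,0)$ and $(\alpha,1)$ for Types~1, 2, 4, and $(1,1)$, $(2,1)$ for Type~3 — yields the conditions listed in the first three rows of the table by a short computation.

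The technical core of the argument, and in my view the main obstacle, is the completeness of the classification for Type~4, since here both $a$ and $\alpha$ vary independently and condition~(iii) of Theorem~\ref{thm:smoothrhoX2} forces additional constraints on the $u_j$. I would handle it by case splitting according to where the extremal weights $(0,1)$ and $(\alpha,1)$ occur: (a) if $w_2=(\alpha,1)$, i.e.\ $a=\alpha$, a direct substitution shows that the interior condition $x>\alpha y$ evaluated at $-K_X$ reduces exactly to $m\alpha < \frac{n-2}{2} + \sum a_j$, giving the first Type~4 row; (b) if $w_2 = (0,1)$ with $\alpha > 0$, the bound $\sum a_j \le m\alpha$ forces any almost-Fano instance to satisfy $\alpha = 1$ and $a_j=1$ for every $j$, which yields the second Type~4 row and in which case equality in $x=\alpha y$ is automatic, so the variety is truly almost Fano; (c) in the remaining possibility $0<a<\alpha$ one shows by the same inequality manipulation, using $\sum a_j\le m\alpha$ and the fact that some $u_j$ must equal $(0,1)$, that $-K_X$ cannot lie in $\tau_X$ at all. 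This case analysis exhausts the possibilities consistent with~(iii), thereby completing the corollary.
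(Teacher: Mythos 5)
Your proposal is correct and follows essentially the same route as the paper: one computes $-\mathcal{K}_X$ explicitly in each Type and tests membership in $\tau_X^\circ$ resp.\ $\partial\tau_X$, your adjunction expression $w_1+\cdots+w_n+u_1+\cdots+u_m-\deg(g)$ being identical (via the homogeneity relations $w_i+w_{i+1}=\deg(g)$, resp.\ $2w_n=\deg(g)$) to the formula $\tfrac{n-2}{2}\deg(g)+u_1+\cdots+u_m$ that the paper obtains from Proposition~\ref{prop:antican}. Your Type~4 case split on the position of $(0,1)$ and $(\alpha,1)$ is a correct and slightly more detailed account of the step the paper compresses into its closing remark.
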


Note that in Theorem~\ref{thm:smoothrhoX2},
the variety $X$ is of dimension $n+m-3$.
Thus, the above table provides us in particular
for every dimension with the numbers of 
(almost) Fano smooth intrinsic quadrics
of Picard number two.
The overlap with the classification of
smooth Fano threefolds by Mori and Mukai
consists of the threefold Type~3 with 
$n=5$, $m=1$, and the threefold of 
Type~4 with $n=6$, $m=0$, 
which occur as No.~2.30 and No.~2.32 
in~\cite{MoMu}, respectively.

For a Fano, not necessarily 
smooth, full intrinsic quadric $X$, 
we see in Proposition~\ref{prop:fullFano}
that its Picard number is bounded 
by $\rho(X) \le 3$.
Moreover, if $X$ is smooth, then we
can further show $\rho(X) \le 2$
and arrive at the following.

\begin{theorem}
\label{thm:fullFanos}
Let $X$ be a Fano smooth full intrinsic quadric.
Then $X$ is of Picard number $\rho(X) \le 2$ 
and
\begin{enumerate}
\item
if $\rho(X) = 1$ holds, then $X$ is isomorphic 
to the smooth projective quadric 
$V(T_0^2 + \ldots + T_n^2) \subseteq \PP_n$,
where $n \ge 4$,
\item
if $\rho(X) = 2$ holds, then $X$ 
is isomorphic to 
$V(T_0S_0 + \ldots + T_nS_n) \subseteq \PP_n \times \PP_n$,
the flag variety 
of type $(1,n-1,1)$, where $n \ge 2$.
\end{enumerate}
\end{theorem}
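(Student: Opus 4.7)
The plan is to split by Picard number; Proposition~\ref{prop:fullFano} gives $\rho(X) \le 3$ for every Fano full intrinsic quadric, so only $\rho(X) \in \{1,2,3\}$ need treatment. For $\rho(X) = 1$, Proposition~\ref{prop:picardnumber1} identifies $X$ as a classical smooth projective quadric, with Cox ring $\KK[T_0,\ldots,T_n]/\bangle{q}$ and every generator of degree one. Fullness is then automatic, and smoothness of $X$ forces $q$ to have maximal rank; diagonalizing $q$ by a linear change of coordinates over the algebraically closed base field yields the normal form $q = T_0^2 + \ldots + T_n^2$. The bound $n \ge 4$ comes from the fact that a smooth quadric of dimension at most two has Picard number greater than one, which contradicts the hypothesis.

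For $\rho(X) = 2$, I apply Theorem~\ref{thm:smoothrhoX2}. In Types~1, 2, 3, the classification forces $m \ge 1$, hence generators $S_j$ which do not appear in $g$; these configurations are therefore not full. Type~4 is full only when $m = 0$, and then the requirement that $(0,1)$ and $(\alpha,1)$ both occur among the Cox ring degrees collapses to the $w_i$ alone. Since the odd-indexed $w_i$ all equal $(1,0)$, both values must coincide with the common even-indexed value $(a,1)$, forcing $a = 0 = \alpha$, so $w_{2i-1} = (1,0)$ and $w_{2i} = (0,1)$. Writing the $n \ge 6$ variables as $k+1$ pairs and relabelling them as $(T_i, S_i)$, the Cox ring becomes $\KK[T_0,\ldots,T_k, S_0,\ldots,S_k]/\bangle{T_0 S_0 + \ldots + T_k S_k}$ with the standard biprojective grading, realising $X$ as $V(T_0 S_0 + \ldots + T_k S_k) \subseteq \PP_k \times \PP_k$ with $k \ge 2$, which is the flag variety of type $(1, k-1, 1)$. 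The Fano property is immediate from Corollary~\ref{cor:FanosAlmFanos}, whose Type~4 inequality reduces here to $0 < (n-2)/2$ and holds for $n \ge 6$.

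The remaining and hardest step is to exclude $\rho(X) = 3$ under smoothness. My approach is to reuse the finer information from the proof of Proposition~\ref{prop:fullFano}: at $\rho = 3$, fullness together with the Fano inequality on $-K_X = \sum_i \deg T_i - \deg g$ restricts the possible $\Cl(X)$-gradings on $\KK[T_1,\ldots,T_N]/\bangle{g}$ to a short list of shapes, each determined by how the generators pair up into monomials of $g$ and by the location of the effective cone. For each candidate shape I would then apply the standard smoothness test for a variety presented by a Cox ring with a single relation: every $X$-relevant face of the grading cone must be regular after accounting for the derivative of $g$, which amounts to checking that each toric affine chart in the quotient presentation is smooth. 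The expectation, to be verified case by case, is that every $\rho = 3$ shape either produces a singular chart or forces an extra generator missing from $g$, so smoothness and fullness cannot coexist at Picard number three. This combinatorial case traversal, rather than any new conceptual ingredient, is where the main work lies.
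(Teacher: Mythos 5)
Your handling of Picard numbers one and two is correct and essentially the paper's: Proposition~\ref{prop:picardnumber1} settles $\rho(X)=1$, and for $\rho(X)=2$ the only full constellations in Theorem~\ref{thm:smoothrhoX2} are Type~4 with $m=0$, which forces $a=\alpha=0$ and yields $V(T_0S_0+\ldots+T_nS_n)\subseteq\PP_n\times\PP_n$. The genuine gap is the exclusion of $\rho(X)=3$, which is the substantive content of the theorem and which you do not carry out: you describe a case traversal whose conclusion is announced as an ``expectation, to be verified case by case.'' That is a plan, not a proof, and the plan is aimed at the wrong target. You predict that every $\rho=3$ shape ``either produces a singular chart or forces an extra generator missing from $g$, so smoothness and fullness cannot coexist at Picard number three'' --- but Theorem~\ref{thm:fullpic3} exhibits smooth full intrinsic quadrics of Picard number three for every even $n\ge 8$. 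Smoothness and fullness coexist perfectly well at $\rho(X)=3$; what fails there is the Fano property, and an argument organized around ruling out smooth charts cannot succeed.

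The paper's actual exclusion is short once a normal form is established. Lemma~\ref{lem:firstsixweights} (resting on Lemmas~\ref{lem:123renumb135}--\ref{lem:nosmootharea1}) shows that for a locally factorial full standard intrinsic quadric of Picard number three one may choose coordinates on $\Cl(X)=\ZZ^3$ with $w_1=(1,0,0)$, $w_4=(d_1,0,1)$, $w_6=(d_1,1,0)$, $\deg(g)=(d_1,1,1)$, $d_1\ge 0$, and with the ample cone contained in $Q(\gamma_{1,4,6})^\circ=\cone(w_1,w_4,w_6)^\circ$. By Proposition~\ref{prop:antican} with $t=m=0$, the anticanonical class equals $\tfrac{q-2}{2}\deg(g)$, a positive multiple of $(d_1,1,1)$; writing $(d_1,1,1)=aw_1+bw_4+cw_6$ forces $b=c=1$ and $a=-d_1\le 0$, so $\deg(g)$ is not a strictly positive combination of $w_1,w_4,w_6$ and hence $-\mathcal{K}_X$ is never ample. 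Your proposal gestures at the right ingredient (the anticanonical class of a full quadric is a multiple of $\deg(g)$) but never isolates the decisive point that this multiple is forced out of the ample cone by the normal form; without that, or some substitute for the lemmas producing the normal form, the theorem is not proved.
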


We use our results for testing Fujita's freeness
conjecture, which says that for any smooth 
projective variety $X$ with canonical 
divisor $C_X$, the divisor $C_X + sD$ 
is base point free provided that 
$D$ is ample and $s \ge \dim(X) + 1$ 
holds, see~\cite{Fu}.
This statement is known to hold
for varieties with torus 
action of complexity at most 
one~\cites{fujino,AlIl}
and in general up to dimension 
five~\cites{Re,EL1,Kaw,yezhu2}.
Corollary~\ref{cor:bpfquadricssatrhoX2}
verifies Fujita's freeness
conjecture for smooth intrinsic quadrics 
of Picard number at most two.

We turn to Picard number three.
Recall that smooth toric varieties of Picard number 
three have been described by Batyrev in~\cite{Ba}
in terms of primitive collections. 
In the setting of intrinsic quadrics, we obtain 
a complete picture in the full case.

\begin{theorem}
\label{thm:fullpic3}
Let $X$ be a full smooth intrinsic quadric of 
Picard number three.
Then $X$ has divisor class group $\Cl(X) = \ZZ^3$ 
and, with a suitable even integer $n \ge 8$,
the Cox ring of $X$ is given by
$$
\mathcal{R}(X) 
\ \cong \
\KK[T_1,\ldots,T_n] / \bangle{g}, 
\qquad\qquad 
g 
\ \sei \ 
T_1T_2 + \ldots + T_{n-1}T_n.
$$
The possible constellations for the $\Cl(X)$-gradings 
and the ample cone of $X$ are the following.
There is an integer $a\ge 0$ such that the degree of 
the relation is $\mu \sei \deg(g) = (1,a,1)$
and we have
$$
w_ 1 = w_3 = (0,1,0), 
\quad
w_2 = w_4 = (1,a-1,1),
$$
$$
w_5= (0,0,1),
\quad
w_6= (1,a,0),
\quad
w_7= (1,0,0),
\quad
w_8= (0,a,1).
$$
For all odd $9 \le i < n$, the generator
degrees $w_i$, $w_{i+1}$ coincide either with~$w_1,w_2$
or are located on the line segments~$\conv(w_5,w_8)$
and~$\conv(w_6,w_7)$.

\begin{center}
\begin{tikzpicture}[scale=1.3]


\coordinate (o) at (0,0);
\coordinate (w1) at (-0.5,-0.5);
\coordinate (s) at (-.5,-1.25); 
\coordinate (w2) at   ($(o)!.8!(s)$);
\coordinate (w5) at  (0,1);
\coordinate (w6) at (-.5,-2.5);
\coordinate (w7) at (-2.5,-1.25);
\coordinate (w8) at (1,0);
\coordinate (h1) at (-2.5,-2.5);
\coordinate (h2) at (-2,-1);
\coordinate (h3) at (-.75,-2);
\coordinate (t) at (-1.25,-2.25);

\draw[domain=-2.5:2,variable=\x] plot ({\x},{0});
\draw[domain=0:1,variable=\x] plot ({\x},{\x});
\draw[domain=-3:-2.5,variable=\x] plot ({\x},{\x});
\draw[domain=-.75:2.5,variable=\x] plot ({0},{\x});

\draw[dashed] (w5)--(w7);
\draw[dashed] (w6)--(w8);

\fill[gray!50,opacity=0.3] (o) -- (w7) -- (h1) -- cycle; 
\fill[gray!50,opacity=0.3] (o) -- (h1) -- (w6) -- cycle; 
\fill[gray!50,opacity=0.3] (o) -- (w6) -- (w7) -- cycle; 

\fill[gray!50,opacity=0.3] (o) -- (w6) -- (s) -- cycle; 
\fill[gray!50,opacity=0.3] (o) -- (h1) -- (s) -- cycle; 
\fill[gray!50,opacity=0.3] (o) -- (h1) -- (w6) -- cycle; 

\draw[gray!60] (o)--(s);
\draw[gray!60] (o)--(w6);
\draw[gray!60] (o)--(w7);
\draw[gray!60] (h1)--(w6)--(w7)--cycle;
\draw[gray!60] (h1)--(w6)--(s)--cycle;

\coordinate (c) at
(intersection cs: first line={(w7) -- (w6)},
                  second line={(h1) -- (s)});

\draw[thick,black!100,opacity=0.3] (o)--(c);
\draw[thick,black!100,opacity=0.3] (o)--(h1);
\draw[thick,black!100,opacity=0.3] (o)--(w6);
\draw[thick,black!100,opacity=100,rounded corners=0.1pt] (h1)--(w6)--(c)--cycle;
\path[fill, color=black] (-1.5,-2.3) circle (0ex) node[]{\tiny{ $\tau_X$}};

\path[fill, color=gray!80] (w1) circle (0.3ex);
\path[fill, color=black] (w1) circle (0.0ex)  node[left]{\tiny $w_1 \!$};
\path[fill, color=black] (w2) circle (0.3ex) node[left]{\tiny $w_2 \!$};
\path[fill, color=black] (w5) circle (0.3ex) node[left]{\tiny $w_5$};
\path[fill, color=black] (w6) circle (0.3ex) node[below]{\tiny $\qquad w_6$};
\path[fill, color=black] (w7) circle (0.3ex) node[left]{\tiny $w_8$};
\path[fill, color=black] (w8) circle (0.3ex) node[below]{\tiny $\qquad w_7$};
\end{tikzpicture}   
\end{center}

\noindent
Moreover, as indicated above, the semiample cone  
$\tau_X \subseteq \Cl(X)_{\QQ}$ 
of $X$ is given as the intersection of two cones:
$$ 
\tau_X
\ = \ 
\cone(w_1,w_2,w_6) 
\cap 
\cone(w_1,w_6,w_8). 
$$
Conversely, each of the above constellations defines a 
full smooth intrinsic quadric~$X$ of Picard number~3.
Moreover, each such~$X$ 
admits a locally trivial fibration with fibers 
a projective space onto a smooth projective toric 
variety of Picard number~two.
\end{theorem}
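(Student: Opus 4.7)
The plan is to first bring the relation $g$ into a normal form, then constrain the $\Cl(X)$-grading combinatorially via the smoothness criterion for Cox rings, and finally verify the converse together with the ample cone and fibration statements.

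\emph{Normal form and class group.} Since $X$ is full, every $T_i$ appears in $g$, and a $\Cl(X)$-homogeneous change of generators brings $g$ into the shape $T_1T_2+\ldots+T_{n-1}T_n$ with $n$ even. I would show that no summand $T_i^2$ can occur in Picard number three: such a term would impose $2w_i=\mu$, which together with the four or more pair equations $w_{2j-1}+w_{2j}=\mu$ and smoothness at the $T_i$-axis orbit closure forces the generator degrees to span only a rank two sublattice of $\Cl_\QQ(X)$, contradicting $\rho(X)=3$. Since $X$ is a smooth Mori dream space, $\Cl(X)$ is free of rank $\rho(X)=3$.

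\emph{Classification of the grading.} Each pair $(w_{2i-1},w_{2i})$ sums to $\mu=\deg(g)\in\Cl(X)$. By the smoothness criterion for Cox rings of Mori dream spaces, for every relevant face $\gamma_0$ of the effective cone meeting $\tau_X$, the weights $w_i$ indexed outside $\gamma_0$ form a $\ZZ$-basis of $\Cl(X)$, hence a triple in the rank three case. Combined with the pair condition and the requirement that $\Eff(X)$ be pointed and $\tau_X$ full-dimensional, an enumeration of how at least four such pairs can be arranged in $\QQ^3$ leads to the stated normal form: there is $a\ge 0$ with $\mu=(1,a,1)$, a pair of multiplicity at least two equal to $((0,1,0),(1,a-1,1))$, and the two exceptional pairs $(w_5,w_6)$ and $(w_7,w_8)$ in the positions displayed. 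Any further pair $(w_i,w_{i+1})$ with $i\ge 9$ must either repeat $(w_1,w_2)$ or satisfy $w_i\in\conv(w_5,w_8)$ and $w_{i+1}=\mu-w_i\in\conv(w_6,w_7)$; these are precisely the locations that do not split an existing GIT chamber, an event that would raise the Picard number.

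\emph{Converse, ample cone, and fibration.} For each listed constellation I would check the smoothness criterion directly: the relevant faces correspond to the two full-dimensional chambers visible in the figure, and the complementary triples of $w_i$ always form a $\ZZ$-basis of $\ZZ^3$. The semiample cone is then read off as $\cone(w_1,w_2,w_6)\cap\cone(w_1,w_6,w_8)$. Finally, projecting $\Cl(X)\to\ZZ^2$ along a suitable weight (one that identifies the degrees of an exceptional pair) exhibits $X$ as a locally trivial fibration over a smooth projective toric variety of Picard number two, with projective-space fibers because after the projection the relation $g$ splits off a linear factor in the fiber variables. The main obstacle will be the combinatorial enumeration in the classification step: ruling out the various accidental configurations compatible with both $\rho(X)=3$ and the smoothness criterion requires a patient case analysis of the bunch of orbit cones attached to the Cox ring data.
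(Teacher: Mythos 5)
Your overall strategy (normal form, constraints from the smoothness criterion, case analysis, then the converse and the fibration) is the same as the paper's, but two of the specific mechanisms you invoke are incorrect, and the step that constitutes essentially all of the mathematical content is left unexecuted. First, your exclusion of square terms does not work as stated: the relations $2w_i=\mu$ and $w_{2j-1}+w_{2j}=\mu$ do \emph{not} force the weights to span a rank-two sublattice (take $\mu=(2,0,0)$, $w_i=(1,0,0)$, $w_1=(0,1,0)$, $w_2=(2,-1,0)$, $w_3=(0,0,1)$, $w_4=(2,0,-1)$). The paper's route is different: for Fano $X$ one uses that the face $\gamma_{1,2,q+1}$ is $X$-relevant and its image is only two-dimensional, so $\Pic(X)$ sits in a rank-two subgroup (Proposition~\ref{prop:fullFano}); in the general smooth case one first pins down $\deg(g)=(d_1,1,1)$ in suitable coordinates (Lemma~\ref{lem:firstsixweights}), after which $2w=\deg(g)$ has no integral solution. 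Second, your reason for confining the pairs $(w_i,w_{i+1})$, $i\ge 9$, to the stated positions --- that other positions would ``split an existing GIT chamber, an event that would raise the Picard number'' --- is false: refining the GIT fan does not change $\rho(X)$ at all. The actual exclusions come from determinant conditions imposed by Proposition~\ref{prop:smooth}: placing a weight in a forbidden region produces $X$-relevant triples or quadruples whose determinants must equal $1$ but compute to $-1$, $0$, $2$, or worse (this is the content of Lemmas~\ref{lem:nosmootharea1} and~\ref{lem:nosmootharea2} and of Cases~(1)--(4) in the paper's proof).

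Beyond these two errors, the ``enumeration of how at least four such pairs can be arranged in $\QQ^3$'' and the analysis of how the admissible cases may be combined is precisely the theorem; you acknowledge it as ``the main obstacle'' but do not carry it out, so the proposal does not yet contain a proof of the normal form. Two smaller points: the smoothness criterion you quote is stated with the wrong convention --- in the bunched-ring formalism used here, for an $X$-relevant face $\gamma_0\preceq\QQ_{\ge0}^{n}$ it is the weights $Q(e_i)$ with $e_i\in\gamma_0$ (not those outside) that must generate $\Cl(X)$; and torsion-freeness of $\Cl(X)$ is not automatic from smoothness of a Mori dream space but is itself derived from that criterion applied to a three-dimensional $X$-relevant face. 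Your sketch of the converse and of the fibration via a split of the grading is consistent with the paper's Part~II and would go through once the normal form is established.
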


Again, we use this description 
to verify Fujita's freeness
conjecture also for full smooth intrinsic 
quadrics of Picard number three,
see Corollary~\ref{cor:fujitapic3full}.
As soon as we leave the full case, 
the situation in Picard number 
three becomes much more ample;
complete descriptions in the 
dimensions three and four have 
been elaborated in~\cite{Fa}.

\tableofcontents

\section{Basics on intrinsic quadrics}
\label{sec:quadrbasics}

We first discuss purely quadratic 
polynomials in general and present
a graded normal form in
Proposition~\ref{prop:quaddiag}.
Then we provide a quick guide  
to the general combinatorial 
theory of~\cite{ArDeHaLa}*{Chap.~3}
adapted to 
the sample class of intrinsic quadrics. 
This allows us in particular to encode 
and read off the necessary geometric 
properties.

Throughout the whole article, we work over 
an algebraically closed field $\KK$ of 
characteristic zero.
A grading of a $\KK$-algebra $R$ by a finitely 
generated abelian group $K$ is a direct sum
decomposition 
$$ 
R \ = \ \bigoplus_{w \in K} R_w
$$
into vector subspaces $R_w \subseteq R$ 
being compatible with multiplication in the 
sense that $R_wR_{w'} \subseteq R_{w+w'}$ 
holds for all $w,w' \in K$. 
A homomorphism of graded algebras $R = \oplus_K R_w$ 
and $S = \oplus_L S_u$ is a pair $(\psi,F)$ 
consisting of an algebra homomorphism 
$\psi \colon R \to S$ and a group homomorphism 
$F \colon K \to L$ such that one always has 
$\psi(R_w) \subseteq S_{F(w)}$.
In this situation, we speak of a graded homomorphism 
if $K = L$ holds and $F$ is the identity map.

\begin{proposition}
\label{prop:quaddiag}
Let $K$ be a finitely generated abelian group, 
consider a $K$-grading on the polynomial ring
$\KK[T_1,\ldots,T_s]$ such that the 
variables $T_1,\ldots,T_s$ and the following 
quadratic polynomial are $K$-homogeneous:
$$ 
g  
\ = \ 
\sum_{1 \le i \le j \le s} a_{ij} T_iT_j
\ \in \
\KK[T_1,\ldots,T_s].
$$
Then there are a linear 
automorphism 
$\psi \colon \lin(T_1,\ldots,T_s) \to \lin(T_1,\ldots,T_s)$ 
inducing a graded automorphism  
$\Psi \colon \KK[T_1,\ldots,T_s] \to \KK[T_1,\ldots,T_s]$ 
and non-negative integers 
$q,t$ with $q+t \le s$ such that
$$ 
\Psi(g) 
\ = \ 
g_{q,t}
\ \sei \
T_1T_2 + \ldots + T_{q-1}T_q + T_{q+1}^2 + \ldots + T_{q+t}^2
$$ 
and $\deg(T_{q+k}) \ne \deg(T_{q+l})$ holds for all  
$0 < k < l < t$.
In this setting, $s - q - t$ is the dimension of 
the singular locus of $V(g_{q,t}) \subseteq \KK^s$
and $t$ counts the $u \in K$ with $2u = \deg(g_{q,t})$ 
such that the number of $T_i$ of degree $u$ 
showing up in $g_{q,t}$ is odd.
\end{proposition}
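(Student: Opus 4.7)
The plan is to split $g$ according to the $K$-grading on the linear span $V:=\lin(T_1,\ldots,T_s)$, normalize each graded piece by a graded linear automorphism $\psi$ of $V$, and finally collapse pairs of squares of equal degree into cross terms. Write $\mu:=\deg(g)$ and $V_w$ for the subspace spanned by the $T_i$ with $\deg(T_i)=w$. Homogeneity of $g$ forces every occurring monomial $T_iT_j$ to satisfy $\deg(T_i)+\deg(T_j)=\mu$, so $g$ decomposes along the unordered pairs $\{w,\mu-w\}\subseteq K$: for $2w\ne\mu$ the corresponding piece is a bilinear pairing $V_w\times V_{\mu-w}\to\KK$, and for $2w=\mu$ it is a symmetric quadratic form on $V_w$. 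Any graded linear automorphism of $V$ splits as $\bigoplus_w\psi_w$ with $\psi_w\in\GL(V_w)$ and extends uniquely to a graded algebra automorphism $\Psi$ of $\KK[T_1,\ldots,T_s]$, so normalizations carried out on individual pieces automatically assemble into a single $\Psi$.

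For each unordered pair with $2w\ne\mu$, simultaneous basis changes in $V_w$ and $V_{\mu-w}$ (ordinary Gaussian elimination on the pairing matrix) bring the corresponding piece into the form $T_{i_1}T_{j_1}+\ldots+T_{i_r}T_{j_r}$, with $r$ the rank of the pairing; the remaining variables drop out of $g$. For each $w$ with $2w=\mu$, the restriction of $g$ to $V_w$ is a symmetric quadratic form, diagonalizable over the algebraically closed characteristic-zero field $\KK$ to $T_{k_1}^2+\ldots+T_{k_r}^2$. Pairs of these squares are then converted into cross terms via $T_a^2+T_b^2=(T_a+\sqrt{-1}\,T_b)(T_a-\sqrt{-1}\,T_b)$, the new variables remaining in $V_w$; thus each self-dual degree class contributes $\lfloor r/2\rfloor$ cross terms and at most one residual square, the latter occurring exactly when $r$ is odd. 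Reindexing the variables so that the cross-term variables come first and the surviving squares follow yields $\Psi(g)=g_{q,t}$, and since each self-dual degree class leaves at most one residual square, the degrees of $T_{q+1},\ldots,T_{q+t}$ are pairwise distinct.

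For the final assertions, the Jacobian of $g_{q,t}$ vanishes precisely on $V(T_1,\ldots,T_{q+t})$, so the singular locus of $V(g_{q,t})\subseteq\KK^s$ has dimension $s-q-t$. For the characterization of $t$, note that the variables of degree $u$ with $2u=\mu$ appearing in $g_{q,t}$ either come in cross pairs $T_{2i-1}T_{2i}$ with both endpoints of degree $u$ (contributing an even number) or as the single residual square $T_{q+i}^2$ (contributing one); the total is therefore odd precisely when such a residual square is present, so $t$ equals the number of $u\in K$ with $2u=\mu$ and an odd count of degree-$u$ variables in $g_{q,t}$. The main subtlety throughout is ensuring that every reduction step operates inside a single graded piece $V_w$, respectively inside a single pair $V_w\oplus V_{\mu-w}$ acting independently on each factor, so that the cumulative transformation $\psi$ is truly $K$-graded; once this is verified the proof reduces to standard linear algebra over an algebraically closed field of characteristic zero.
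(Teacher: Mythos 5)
Your proof is correct and follows essentially the same route as the paper: decompose $g$ along the graded pieces determined by the pairs $\{w,\mu-w\}$, normalize the bilinear pairings for the non-self-dual pairs, and diagonalize and then pair up squares within each self-dual degree class, leaving at most one residual square per class. The only differences are cosmetic — you invoke the standard rank/diagonalization normal forms where the paper writes out the elimination automorphisms explicitly, and you additionally spell out the (easy) supplementary claims about the singular locus and the count $t$, which the paper leaves to the reader.
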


\begin{proof}
Suitably renumbering the variables,
we may assume that $T_1, \ldots, T_r$
are precisely the variables that show 
up in $g$.
Let $w_1, \ldots, w_n \in K$ be the degrees 
of $T_1, \ldots, T_r$;
we impose $w_k \ne w_l$ for $k \ne l$ here.
Moreover, set $\mu \sei \deg(g) \in K$.
Further suitable renumbering of variables 
yields
$$ 
w_1 + w_2 = \ldots = w_{m}+w_{m+1} = \mu,
\qquad\qquad
2w_{m+2} = \ldots = 2w_n = \mu
$$
with a unique odd number $-1 \le m < n$.
Some of the variables $T_1, \ldots, T_s$ may 
share the same degree and we have 
$$ 
V 
\ \sei \ 
\lin(T_1,\ldots,T_s)
\ = \ 
V_1 \oplus \ldots \oplus V_n \oplus V_0,
$$
where $V_k$ is the linear subspace generated
by all $T_i$, $1 \le i \le r$, of degree $w_k$,
and $V_0$ is the linear subspace generated by 
the variables $T_{r+1}, \ldots, T_s$.
Suitably renumbering the $T_i$ again, we 
achieve
$$ 
T_1, \ldots, T_{d_1} \in V_1,
\quad
\ldots, 
\quad
T_{d_{n-1}+1}, \ldots, T_{d_n} \in V_n,
\quad
T_{d_n+1}, \ldots, T_s \in V_0.
$$

The idea is to build up $\psi$ stepwise
from appropriate endomorphisms $V \to V$.
First, consider variables $T_i \in V_1$ and 
$T_j \in V_2$ with $\alpha_{ij} \ne 0$.
Define a linear automorphism 
\begin{eqnarray*}
\psi_{ij} \colon V \ \to \ V,
\qquad
T_j \ \mapsto \ a_{ij}^{-1}T_j - a_{ij}^{-1}\sum_{k \ne j} a_{ik}T_k,
\quad
T_l \ \mapsto \ T_l \text{ for } l \ne j.
\end{eqnarray*}
Then $\psi_{ij}$ respects the direct 
sum decomposition of $V$ and restricts 
to the identity on all components different 
from $V_2$.
Moreover, $\psi_{ij}$ extends to an 
automorphism~$\Psi_{ij}$ of the $K$-graded 
algebra $\KK[T_1, \ldots, T_s]$ and we have 
$$ 
\Psi_{ij}(g) 
\ = \ 
\left(T_i+ \sum_{k \ne i} a_{ij}^{-1}a_{kj}T_k \right)T_j 
+ 
\sum_{k \ne i, \ l \ne j} \tilde{a}_{kl}T_kT_l
$$
with some $\tilde{a}_{kl}\in\KK$.
Now define a linear automorphism 
\begin{eqnarray*}
\psi_{ji} \colon V \ \to \ V,
\qquad
T_i \ \mapsto \ T_i - a_{ij}^{-1} \sum_{k \ne i}  a_{kj}T_k,
\quad
T_l \ \mapsto \ T_l \text{ for } l \ne i.
\end{eqnarray*}
Similarly as before, $\psi_{ji}$ respects the 
direct sum decomposition of $V$ and restricts 
to the identity on all components different 
from $V_1$.
Again, $\psi_{ji}$ extends to an 
automorphism~$\Psi_{ji}$ of the $K$-graded 
algebra $\KK[T_1, \ldots, T_s]$.
This time we have 
$$ 
\Psi_{ji}(\Psi_{ij}(g)) 
\ = \ 
T_iT_j + \sum_{k \ne i, \ l \ne j} \tilde{a}_{kl}T_kT_l.
$$
Thus, a suitable composition of the 
automorphisms $\Psi_{ji} \circ \Psi_{ij}$ 
turns $g$ into the desired form with respect 
to the variables from $V_1$ and $V_2$.
Proceeding similarly, we can settle all 
other pairs $V_{l}$ and $V_{l+1}$ for 
$l = 3, 5, \ldots, m$.

On each subspace $V_k$ for $k > m+1$, 
the variables all have the same $K$-degree
and, if a variable of a given monomial of $g$ 
belongs to $V_k$, then all variables of 
this monomial belong to $V_k$.
Thus, we may treat the part $q_k$ of $q$ built
from variables of $V_k$ separately.  
The usual diagonalization procedure 
for the Gram matrix of $q_k$ leads to 
a presentation of $q_k$ as a sum of squares.
If the number $c_k$ of these squares is even,
then we turn the whole $q_k$ into a sum of 
terms $T_iT_j$ with $i \ne j$.
Otherwise, we turn $q_k$ into a  
sum of $T_iT_j$ with $i \ne j$ 
plus one single square.
\end{proof}

We call $g_{q,t} \in \KK[T_1,\ldots,T_s]$ 
as in Proposition~\ref{prop:quaddiag} a 
\emph{standard $K$-homogeneous quadratic polynomial}.
As the supplement of the proposition shows, 
a given standard $K$-homogeneous quadratic 
polynomial $g_{q,t}$
can be transformed via an automorphism 
of graded algebras into another one,
say $g_{q',t'}$, if and only if $q=q'$ 
and $t=t'$ hold.
If for some $g_{q,t}$ the sum $q+t$ is odd, 
then we must have $t \ge 1$.
Let us briefly discuss what happens if 
$t > 1$ holds.

\begin{remark}
Let $g_{q,t} \in \KK[T_1,\ldots,T_s]$ 
be a standard $K$-homogeneous quadratic
polynomial with $t > 1$.
Then, for any two $1 \le i < j \le t$,
twice the degree of $T_{q+i}$ as well as 
twice the degree of $T_{q+j}$ equal the 
degree of $g_{q,t}$ and thus we have
$$ 
2(\deg(T_{q+i}) - \deg(T_{q+j})) \ = \ 0 \ \in \ K.
$$
In particular, the number $t$ is bounded by 
the order of the subgroup $K_2 \subseteq K$ 
consisting of all elements annihilated by 
multiplication with $2$.    
Here is a concrete example: Take 
$$
K \ = \ \ZZ/2\ZZ \times \ZZ/2\ZZ,
\qquad\qquad
g \ = \ T_1^2 + T_2^2 \ \in \ \KK[T_1,T_2].
$$ 
Define a $K$-grading on $\KK[T_1,T_2]$ 
by  setting
$\deg(T_1) \sei (\bar{1},\bar{0})$ and 
$\deg(T_2) \sei (\bar{0},\bar{1})$.
Then $g = g_{0,2}$ is a standard $K$-homogeneous 
quadratic polynomial in $\KK[T_1,T_2]$.
\end{remark}

We turn to the construction of intrinsic 
quadrics. 
Recall that every Mori dream space, that means, 
every irreducible, normal, projective variety 
$X$ with finitely generated divisor class 
group $\Cl(X)$ and finitely generated Cox ring 
$$ 
\mathcal{R}(X)
\ = \ 
\bigoplus_{\Cl(X)} \Gamma(X,\mathcal{O}_X(D))
$$
can be retrieved from $\mathcal{R}(X)$ as follows.
The above grading defines an action 
of the quasitorus $H = \Spec \; \KK[\Cl(X)]$ on the 
total coordinate space 
$\overline{X} = \Spec \, \mathcal{R}(X)$.
If $u \in \Cl(X)$ is any ample 
class of $X$, then the associated set 
of semistable points is  
$$
\overline{X}^{ss}(u)
\ = \ 
\{
x \in \overline{X}; \; 
f(x) \ne 0 
\text{ for some } 
f \in \mathcal{R}(X)_{nu}, 
\text{ where } n > 0
\}
\ \subseteq  \
\overline{X}.
$$
This is an open $H$-invariant set and the 
variety $X$ is obtained as the associated 
geometric invariant theory quotient 
$X = \overline{X}^{ss}(u) \quot H$.
We refer to~\cite{ArDeHaLa} for more 
background.

Reversing the picture just drawn, we can produce 
all Mori dream spaces from suitable finitely 
generated, normal, integral, $K$-graded $\KK$-algebras
$$ 
R \ = \ \bigoplus_{w \in K} R_w,
$$
where ``suitable'' characterizes the Cox rings
among these algebras.
Let us briefly recall from~\cite{ArDeHaLa}
what that means.
First, $R$ has to be \emph{$K$-factorial} in the sense 
that we have unique factorization in the multiplicative 
monoid $R_\times \subseteq R$ of non-zero homogeneous 
elements of $R$; 
for instance, $R$ can be a unique factorization domain 
in the classical sense.
For the further conditions, fix any system 
$f_1, \ldots, f_s$  of pairwise non-associated $K$-prime,
i.e., prime in $R_\times$, generators of 
$R$ and consider the (convex, polyhedral) cones
$$ 
\kappa_0 
\ \sei \ 
\cone(\deg(f_1), \ldots, \deg(f_s)),
\qquad
\kappa_1
\ \sei \ 
\bigcap_{i = 1}^s \cone(\deg(f_j); \ j \ne i)
$$
in the rational vector space $K_\QQ  =K \otimes_\ZZ \QQ$ 
associated with $K$.
Then we ask the $K$-grading to be \emph{pointed} 
in the sense that $R_0 = \KK$ holds and the 
\emph{weight cone} $\kappa_0$ contains no lines.
Moreover, the $K$-grading must be \emph{almost free}
in the sense that any $s-1$ of the $\deg(f_i)$ 
generate $K$ as a group.
Finally, the \emph{moving cone} $\kappa_1$ has to be 
of full dimension in $K_\QQ$.

\begin{example}
\label{ex:quadkfact}
Let a finitely generated abelian group $K$ and 
a pointed, almost free $K$-grading of the polynomial 
ring $\KK[T_1, \ldots, T_s]$
be given such that all variables~$T_i$ are $K$-homogeneous
and the moving cone is of full dimension in $K_\QQ$.
Moreover, let $g_{q,t} \in \KK[T_1, \ldots, T_s]$ be
a standard $K$-homogeneous quadratic polynomial
and consider the 
factor algebra $R  \sei  \KK[T_1, \ldots, T_s] / \bangle{g_{q,t}}$
with its induced $K$-grading.
\begin{enumerate}
\item
If $q+t \ge 5$ holds, then $R$ is a unique factorization 
domain and the $K$-grading of $R$ is factorial.
\item
For $q+t < 5$, the ring is normal, integral with
factorial $K$-grading if and only if $K = \ZZ^s / M$ 
and $\deg(T_i) = e_i + M$ hold, where $M$ is the row space 
of an $r \times s$ matrix with $r < s$ of the 
following shape
\begin{eqnarray*}
q = 0, \ t = 4:
& & 
{
\small
\left[
\begin{array}{rrrrrrr}
-2 & 2 & 0 & 0  & 0 & \ldots & 0
\\
-2 & 0 & 2 & 0  & 0 & \ldots & 0
\\
-2 & 0 & 0 & 2  & 0 & \ldots & 0
\\
d_1 & d_2 & d_3 & d_4 & d_1' & \ldots & d_m' 
\end{array}
\right],
}
\\[1ex]
q=2, \ t=2:
& &
{
\small
\left[
\begin{array}{rrrrrrr}
-1 & -1  & 2 & 0 & 0  & \ldots & 0
\\
-1 & -1 & 0 & 2 & 0  & \ldots & 0
\\
d_1 & d_2 & d_3 & d_4 & d_1' & \ldots & d_m' 
\end{array}
\right],
}
\\[1ex]
q=0, \ t=3:
& & 
{
\small
\left[
\begin{array}{rrrrrr}
-2 & 2 & 0 & 0  & \ldots & 0
\\
-2 & 0 & 2 & 0  & \ldots & 0
\\
d_1 & d_2 & d_3 & d_1' & \ldots & d_m' 
\end{array}
\right].
}
\end{eqnarray*}
\end{enumerate}
In Case~(ii), the conditions ``almost free'', ``pointed'' 
and ``full-dimensional moving cone'' on the $K$-grading 
mean that the columns of the listed matrices are pairwise 
different primitive lattice points in $\ZZ^r$ generating
$\QQ^r$ as a cone.
For the last two cases, the statement on $K$-factoriality
follows from the results of~\cite{HaHe} and for the first 
one, a proof in a more general framework will be presented
elsewhere.
\end{example}

We are ready for explicitly constructing 
intrinsic quadrics.
The notation introduced in the subsequent two 
constructions will be used thoughout the whole 
article.

\begin{construction}[Standard intrinsic quadrics]
\label{constr:intquad}
Consider a pointed $K$-grading of the polynomial ring 
$\KK[T_1, \ldots, T_n,S_1,\ldots,S_m]$,
where $K$ denotes a finitely generated abelian 
group
and where all variables $T_i$ and $S_j$ 
are $K$-homogeneous,
any $n+m-1$ of their degrees generate $K$ as a 
group and the moving cone is of full dimension 
in $K_\QQ$.
Moreover, let 
$$
g_{q,t} \ \in \ \KK[T_1, \ldots, T_n,S_1,\ldots, S_m]
$$ 
be a standard $K$-homogeneous quadratic polynomial 
with $3 \le q+t = n$;
thus, by choice of notation, $g_{q,t}$ depends 
precisely on the variables $T_1,\ldots, T_n$.
Assume that the $K$-grading is factorial,
that means that
Condition~\ref{ex:quadkfact}~(i) or~(ii) 
is satisfied.
Take any $u \in K$ from the relative interior 
of the moving cone.
Then we obtain a commutative diagram
$$ 
\xymatrix{
V(g_{q,t}) 
\ar@{}[r]|=
&
{\overline{X}}
\ar@{}[r]|\subseteq
\ar@{}[d]|{\rotatebox[origin=c]{90}{$\scriptstyle\subseteq$}}
&
{\overline{Z}}
\ar@{}[r]|=
\ar@{}[d]|{\rotatebox[origin=c]{90}{$\scriptstyle\subseteq$}}
&
{\KK^{n+m}}
\\
&
{\overline{X}}^{ss}(u)
\ar@{}[r]|\subseteq
\ar[d]_{\quot H}
&
{\overline{Z}}^{ss}(u)
\ar[d]^{\quot H}
&
\\
&
X
\ar[r]
&
Z
}
$$
where $H = \Spec \, \KK[K]$ is the quasitorus 
corresponding to $K$, the downwards arrows 
are the GIT-quotients defined by $u$ and 
the bottom horizontal arrow is a closed
embedding.
Moreover, $X$ and $Z$ are normal projective 
varieties and we have 
$$ 
\dim(X) +1 
\ = \  
\dim(Z)
\ = \ 
n+m - \dim(K_\QQ),
\qquad 
\Cl(X) \ = \ \Cl(Z) \ = \ K
$$
for the respective dimensions and 
divisor class groups. 
Moreover, $Z$ is a toric variety and 
we call $X = X(q,t,m,u)$ a \emph{standard intrinsic quadric}.
The Cox ring of $X$ is given as $K$-graded factor 
algebra
$$
\mathcal{R}(X) 
\ = \
\KK[T_1, \ldots, T_n,S_1,\ldots,S_m] / \bangle{g_{q,t}}.
$$ 
By a \emph{full intrinsic quadric}, we mean an
intrinsic quadric with a defining quadratic 
polynomial $g$ such that the normal form of $g$ 
is $g_{q,t} \in \KK[T_1,\ldots,T_n]$ with $n = q+t$,
that means that there are no free variables~$S_j$.
\end{construction}

By definition, intrinsic quadrics are normal 
projective varieties $X$ admitting a presentation
of the Cox ring by $\Cl(X)$-homogeneous generators
such that the ideal of relations is generated 
by single, purely quadratic, 
$\Cl(X)$-homogeneous polynomial.
As an immediate consequence of 
Proposition~\ref{prop:quaddiag} we obtain
the following.

\begin{proposition}
\label{prop:iq2siq}
Every intrinsic quadric is isomorphic to a 
standard intrinsic quadric.
\end{proposition}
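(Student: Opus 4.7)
The plan is to take an arbitrary intrinsic quadric $X$, normalize its defining quadratic relation via Proposition~\ref{prop:quaddiag}, and then recognize the resulting data as exactly the input needed for Construction~\ref{constr:intquad}.

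Concretely, fix a presentation $\mathcal{R}(X) = \KK[T_1,\ldots,T_N]/\bangle{g}$ as in the definition of intrinsic quadric, with $g$ a single $\Cl(X)$-homogeneous quadratic polynomial in $\Cl(X)$-homogeneous generators. By the general theory of Cox rings of Mori dream spaces recalled from~\cite{ArDeHaLa}, the induced $\Cl(X)$-grading on $\KK[T_1,\ldots,T_N]/\bangle{g}$ is pointed (since $R_0 = \KK$ and the weight cone contains no lines), factorial in the sense demanded in Construction~\ref{constr:intquad}, almost free (any $N-1$ generator degrees generate $\Cl(X)$ as a group, since the $T_i$ descend to $\Cl(X)$-prime generators), and the moving cone $\kappa_1$ is of full dimension in $\Cl(X)_\QQ$. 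All of these properties are intrinsic to the grading, so they persist under any graded automorphism of $\KK[T_1,\ldots,T_N]$.

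Next, I would apply Proposition~\ref{prop:quaddiag} to $g$: it yields a graded automorphism $\Psi$ of $\KK[T_1,\ldots,T_N]$ with $\Psi(g) = g_{q,t}$ in standard form, where $g_{q,t}$ involves only the first $n \sei q+t$ of the variables. Passing to the quotient, $\Psi$ induces a graded isomorphism
\[
\mathcal{R}(X) \ \cong \ \KK[T_1,\ldots,T_n,S_1,\ldots,S_m]/\bangle{g_{q,t}},
\qquad m \sei N-n,
\]
where we rename the $m$ variables not occurring in $g_{q,t}$ as $S_1,\ldots,S_m$. The grading properties above are inherited by this new presentation. It remains to check $3 \le n = q+t$: since $\mathcal{R}(X)$ is an integral domain, $g_{q,t}$ is irreducible, which rules out $(q,t) \in \{(0,0),(0,1),(0,2),(2,0)\}$ over the algebraically closed field $\KK$; all remaining cases satisfy $q+t \ge 3$.

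Finally, choose $u \in \Cl(X)_\QQ$ to be the class of an ample divisor of $X$; this class lies in the relative interior of the moving cone. All hypotheses of Construction~\ref{constr:intquad} are now in force, and it produces a standard intrinsic quadric $X(q,t,m,u)$ whose Cox ring and ample class agree with those of $X$. Since a Mori dream space is recovered from its Cox ring together with the choice of an ample class via the GIT quotient indicated in the excerpt, we conclude $X \cong X(q,t,m,u)$. The only genuine work in this argument is the normalization of $g$, which is already isolated in Proposition~\ref{prop:quaddiag}; the main potential obstacle is the bookkeeping required to transfer pointedness, almost freeness, and $K$-factoriality across the graded isomorphism $\Psi$, but since these are properties of the grading alone they survive automatically.
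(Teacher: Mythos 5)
Your proposal is correct and follows exactly the route the paper intends: the paper states this proposition as an immediate consequence of Proposition~\ref{prop:quaddiag}, i.e.\ one normalizes the defining quadric by a graded automorphism and then observes that the resulting data (together with an ample class in the relative interior of the moving cone) is precisely the input of Construction~\ref{constr:intquad}. Your writeup merely makes explicit the routine verifications (persistence of the grading conditions under the graded automorphism, $q+t\ge 3$ via irreducibility, recovery of $X$ from its Cox ring and ample class) that the paper leaves implicit.
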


Note that in Construction,
the set of semistable points 
${\overline{Z}}^{ss}(u)$ is an open toric 
subvariety of ${\overline{Z}} = \KK^{n+m}$
and the quotient map
$\pi \colon {\overline{Z}}^{ss}(u) \to Z$ 
for the action of $H$ is a toric morphism;
in fact this is the usual quotient 
presentation of the toric variety $Z$
from~\cite{Cox}.
Cutting down the orbit decomposition
from the ambient toric variety $Z$ to $X$
yields a decomposition of $X$ into locally 
closed subvarieties which we call
the \emph{pieces} of $X$.
We need to identify these pieces 
explicitly.

\begin{construction}
\label{constr:stratif}
Notation as in Construction~\ref{constr:intquad}.
The  degree homomorphism $Q \colon \ZZ^{n+m} \to K$ sending 
the $i$-th canonical basis vector $e_i \in \ZZ^{n+m}$
to the weight $\deg(T_i) \in K$ gives rise to a pair of mutually
dual exact sequences of abelian groups
$$ 
\xymatrix{
0 
\ar[r]
&
L
\ar[r]
&
{\ZZ^{n+m}}
\ar[r]^{P}
&
{\ZZ^r}
&
\\
0 
\ar@{<-}[r]
&
K
\ar@{<-}[r]_{\!\!\! Q}
&
{\ZZ^{n+m}}
\ar@{<-}[r]_{ \textcolor{white}{sss} P^*}
&
{\ZZ^r}
\ar@{<-}[r]
&
0
}
$$
For every face $\gamma_0 \preceq \gamma$
of the positive orthant $\gamma = \QQ_{\ge 0}^{n+m}$,
denote by 
$\overline{Z}(\gamma_0) \subseteq \overline{Z}$
the set of all points $z \in \overline{Z}$ 
having coordinates $z_i \ne 0$ if $e_i \in \gamma_0$ 
and $z_i = 0$ otherwise.
This sets up a bijection  
$$
\{\text{faces of } \gamma\}
\ \to \
\{\text{toric orbits of } \overline{Z} \},
\qquad
\gamma_0 
\ \mapsto \
\overline{Z}(\gamma_0).
$$
A face $\gamma_0 \preceq \gamma$ 
is called \emph{$Z$-relevant}, if the cone 
$Q(\gamma_0) \subseteq K_\QQ$  
contains $u$ in its relative interior.
The set of semistable points 
$\overline{Z}^{ss}(u)$ is the union
of all toric orbits $\overline{Z}(\gamma_1)$,
where $\gamma_0 \preceq \gamma_1$ 
with a $Z$-relevant $\gamma_0 \preceq \gamma$.
Via the quotient map
$\pi \colon {\overline{Z}}^{ss}(u) \to Z$,
we obtain a bijection
$$
\{Z \text{-relevant faces of } \gamma\}
\ \to \
\{\text{toric orbits of } Z\},
\qquad
\gamma_0 
\ \mapsto \
Z(\gamma_0) \sei \pi(\overline{Z}(\gamma_0)).
$$
We say that $\gamma_0 \preceq \gamma$ is an
\emph{$\overline{X}$-face} if
$\overline{X}(\gamma_0) \sei \overline{X} \cap \overline{Z}(\gamma_0)$
is non-empty and we call it \emph{$X$-relevant}
if in addition $\gamma_0$ is $Z$-relevant.
The $X$-relevant faces of $\gamma$
correspond to the toric orbits of $Z$ intersecting~$X$ 
non-trivially. This leads to a bijection
$$
\{X \text{-relevant faces of } \gamma\}
\ \to \
\{\text{pieces of } X\},
\qquad
\gamma_0 \ \mapsto \ X(\gamma_0) \sei X \cap Z(\gamma_0).
$$
The \emph{covering collection} of $X$ is the set 
$\cov(X)$ of all minimal $X$-relevant faces of $\gamma$. 
The union over all affine toric charts 
$Z_{\gamma_0} \subseteq Z$,
where $\gamma_0$ stems from the covering collection,
is the \emph{minimal toric ambient variety} of $X$; 
it is the minimal open toric subvariety of $Z$
containing $X$ as a closed subvariety. 
\end{construction}

\begin{remark}
\label{rem:Xfaces}
Due to the specific form of $g_{q,t} \in \KK[T_1,\ldots,T_n,S_1, \ldots S_m]$, 
we can explicitly 
describe the faces $\gamma_0 \preceq \gamma$ defining 
a non-empty set
$\overline{X}(\gamma_0) = \overline{X} \cap \overline{Z}(\gamma_0)$.
For any sequence $1 \le i_1 < \ldots < i_k \le n+m$, 
we denote
$$ 
\gamma_{i_1,\ldots,i_k}
\ \sei \ 
\cone(e_{i_1}, \ldots, e_{i_k}) 
\ \preceq \
\gamma.
$$ 
This gives us all the faces of the orthant 
$\gamma = \QQ_{\ge 0}^{n+m}$.
We consider the following four basic types of 
faces:
\begin{enumerate}
\item
$\gamma_{i,i+1,j,j+1}$ with $1 \le i < j < q$ odd,
\item
$\gamma_{i,i+1,j}$ with $1 \le i < q$ odd and $q+1 \le j \le q+t$,
\item
$\gamma_{i,j}$ with $q+1 \le i < j \le q+t$,
\item
$\gamma_{i_1,\ldots,i_k,q+t+1, \ldots, q+t+m}$,
where $i_1 \in \{1,2\}, \ i_2 \in \{3,4\}, \ldots, 
\ i_k \in \{q-1,q\}$ with $k=q/2$. 
\end{enumerate}
Then, $\gamma_0 \preceq \gamma$ is an $\overline{X}$-face, 
i.e.~the set  $\overline{X}(\gamma_0)$ in non-empty,
if and only if one of the following holds
\begin{itemize}
\item 
$\tau \preceq \gamma_0$ with a face $\tau \preceq \gamma$ 
of type~(i), type~(ii) or type~(iii).
\item 
$\gamma_0 \preceq \tau$ with a face $\tau \preceq \gamma$ 
of type~(iv).
\end{itemize}
\end{remark}

A point $x \in X$ of a variety is \emph{factorial} 
if the local ring $\mathcal{O}_{X,x}$ is a unique 
factorization domain.
A variety $X$ is called \emph{locally factorial} 
if all its points are factorial; this is equivalent 
to the property that every Weil divisor of $X$ is Cartier.
We say that a standard intrinsic 
quadric~$X$ arising from Construction~\ref{prop:quaddiag}
is \emph{quasismooth} if $\overline{X}^{ss}(u)$
is smooth; this implies that $X$ has at most abelian 
quotient singularities.

\begin{proposition}
\label{prop:smooth}
Let $X = X(q,t,m,u)$ be a standard intrinsic quadric
arising from Construction~\ref{prop:quaddiag}.
\begin{enumerate}
\item
Let $\gamma_m \sei \cone(e_{q+t+1}, \ldots, e_{q+t+m}) \preceq \gamma$. 
Then the singular locus of the total coordinate 
space $\overline{X} = V(g_{q,t})$ is given by
$$ 
\overline{X}^{\mathrm{sing}}
\ = \ 
V(T_1,\ldots,T_{q+t})
\ = \ 
\bigcup_{\gamma_0 \preceq \gamma_m} \overline{X}(\gamma_0)
\ \subseteq \
\overline{X}. 
$$ 
\item 
The variety $X$ is quasismooth
if and only if every $X$-relevant face 
$\gamma_0 \preceq \gamma$
contains some $e_i$ with $1 \le i \le q+t$.
\item
The piece $X(\gamma_0)$ associated with 
an $X$-relevant 
$\gamma_0 \preceq \gamma$ consists of 
locally factorial points of $X$ 
if and only if $Q(\lin_\QQ(\gamma_0) \cap \ZZ^{n+m})$ 
generates $K$ as a group.
\item
The variety $X$ is locally factorial if and only if 
for every $\gamma_0 \in \cov(X)$, the image
$Q(\lin_\QQ(\gamma_0)\cap \ZZ^{n+m})$ generates $K$ as a group. 
\item
The piece $X(\gamma_0)$ associated with an $X$-relevant 
$\gamma_0 \preceq \gamma$ consists of 
smooth points of $X$ if and only if the 
following two statements hold: 
\begin{enumerate}
\item
$Q(\lin_\QQ(\gamma_0) \cap \ZZ^{n+m})$ generates $K$ as a group,
\item
$e_i \in \gamma_0$ holds for some $1 \le i \le q+t$.
\end{enumerate}
\item
The variety $X$ is smooth if and only if it is quasismooth
and for every $\gamma_0 \in \cov(X)$, the image
$Q(\lin_\QQ(\gamma_0) \cap \ZZ^{n+m})$
generates $K$ as a group. 
\end{enumerate}
\end{proposition}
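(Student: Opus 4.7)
The plan is to prove the six parts in the order given, with (i) and (iii) carrying the real content and the others following by combination.

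For (i), I would compute the partial derivatives of $g_{q,t}$ directly: the non-zero ones are $\partial_{2i-1} g_{q,t} = T_{2i}$ and $\partial_{2i} g_{q,t} = T_{2i-1}$ for $1 \le 2i \le q$, and $\partial_{q+j} g_{q,t} = 2 T_{q+j}$ for $1 \le j \le t$. Hence the Jacobian ideal equals $\langle T_1,\ldots,T_{q+t}\rangle$, and since $g_{q,t}$ is homogeneous of degree two it vanishes wherever all its partials do. This yields $\overline{X}^{\mathrm{sing}} = V(T_1,\ldots,T_{q+t})$; via the orbit-face bijection of Construction~\ref{constr:stratif} this locus equals the union of the $\overline{Z}(\gamma_0)$ for $\gamma_0 \preceq \gamma_m$, and for such $\gamma_0$ one has $\overline{Z}(\gamma_0) \subseteq \overline{X}$ so that $\overline{X}(\gamma_0) = \overline{Z}(\gamma_0)$. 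For (ii), I combine (i) with the description $\overline{X}^{ss}(u) = \bigcup \overline{X}(\gamma_1)$, the union taken over $\gamma_1 \succeq \gamma_0$ with $\gamma_0$ a $Z$-relevant face. Then $\overline{X}^{ss}(u) \cap \overline{X}^{\mathrm{sing}}$ is non-empty iff some $\overline{Z}(\gamma_1) \subseteq \overline{X}^{\mathrm{sing}}$ (i.e.\ $\gamma_1 \preceq \gamma_m$) lies in $\overline{Z}^{ss}(u)$; since any $\gamma_0 \preceq \gamma_m$ is automatically an $\overline{X}$-face, $Z$-relevance and $X$-relevance coincide for such $\gamma_0$, so this reduces to: some $X$-relevant face is $\preceq \gamma_m$, i.e.\ contains no $e_i$ with $i \le q+t$. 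Negating gives (ii).

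For (iii) and (iv), I invoke the standard local structure result for Mori dream spaces from~\cite{ArDeHaLa}*{Chap.~3}: the local divisor class group of $X$ at any point of the piece $X(\gamma_0)$ is $K / Q(\lin_\QQ(\gamma_0) \cap \ZZ^{n+m})$, and a normal variety is locally factorial at a point precisely when its local class group vanishes. This is exactly (iii). For (iv), observe that $\gamma_0 \preceq \gamma_1$ implies $Q(\lin_\QQ(\gamma_0) \cap \ZZ^{n+m}) \subseteq Q(\lin_\QQ(\gamma_1) \cap \ZZ^{n+m})$; hence the criterion in (iii) is monotone with respect to the face order, so it suffices to verify it on the minimal $X$-relevant faces, i.e.\ on $\cov(X)$.

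For (v), a point $x \in X(\gamma_0)$ is smooth in $X$ iff its preimage $\bar{x} \in \overline{X}(\gamma_0)$ is a smooth point of $\overline{X}^{ss}(u)$ and the $H$-stabilizer $H_{\bar{x}}$ is trivial. By (i), smoothness of $\overline{X}$ at $\bar{x}$ is exactly condition~(b), and a direct computation of the stabilizer gives $H_{\bar{x}} = \Spec \KK[K / Q(\lin_\QQ(\gamma_0) \cap \ZZ^{n+m})]$, so its triviality is condition~(a). Finally (vi) is the global combination: $X$ is smooth iff (a) and (b) of (v) hold for every $X$-relevant face; (b) for all such is precisely quasismoothness by (ii), and (a) for all such is equivalent to (a) on $\cov(X)$ by the monotonicity argument from (iv). The main obstacle will be justifying the local class group formula underlying (iii): it is the only step appealing to external machinery, and some care is required because $\overline{X}$ itself is singular along $V(T_1,\ldots,T_{q+t})$, so one must check that the general Mori-dream-space arguments from~\cite{ArDeHaLa} apply at points sitting in that locus.
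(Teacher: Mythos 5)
Your proposal is correct and follows essentially the same route as the paper, which simply observes that (i) is a direct Jacobian computation and refers parts (ii)--(vi) to the general local structure results for Mori dream spaces in \cite{ArDeHaLa}*{Cor.~3.3.1.8 and Prop.~3.3.1.10} (local class group $\cong K/Q(\lin_\QQ(\gamma_0)\cap\ZZ^{n+m})$, factoriality $\Leftrightarrow$ its vanishing, smoothness $\Leftrightarrow$ factoriality plus smoothness upstairs); your concern about the singular locus of $\overline{X}$ is resolved because those results only require $\overline{X}$ to be the spectrum of a $K$-factorial, normal algebra, not a smooth one. The only point to phrase with care is your criterion in (v): ``smooth iff $\bar x$ smooth and $H_{\bar x}$ trivial'' is not a valid principle for arbitrary quasitorus quotients (e.g.\ $\KK/\{\pm 1\}$), but here the almost-freeness of the grading makes triviality of $H_{\bar x}$ coincide with vanishing of the local class group, which is exactly the hypothesis under which the cited smoothness criterion holds.
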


\begin{proof}
The first statement is obvious and the remaining ones 
are the adapted versions 
of~\cite{ArDeHaLa}*{Cor.~3.3.1.8 and Prop.~3.3.1.10}. 
\end{proof}

The following three statements are proven 
in more generality in~\cite{ArDeHaLa}*{Sec.~3.3}.
Below, we denote for a convex, polyhedral cone 
$\sigma$ in a rational vector space $V$, its
relative interior by $\sigma^{\circ}$.

\begin{proposition}
\label{prop:picard}
Let $X = X(q,t,m,u)$ be a standard intrinsic quadric
arising from Construction~\ref{constr:intquad}.
Then the Picard group of $X$ is given as
$$ 
\Pic(X) 
\ = \ 
\bigcap_{\gamma_0 \in \cov(X)} Q(\lin_\ZZ(\gamma_0)\cap E)
\ \subseteq \ 
K
\ = \ 
\Cl(X). 
$$
\end{proposition}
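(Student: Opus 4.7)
The plan is to follow the standard Mori dream space recipe: use the open affine cover of $X$ provided by the covering collection to reduce the Cartier condition on a Weil divisor to a local triviality check, and then identify each local kernel with the image $Q(\lin_\ZZ(\gamma_0) \cap E)$.

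First, I would note that by Construction~\ref{constr:stratif}, the affine toric charts $Z_{\gamma_0}$ for $\gamma_0 \in \cov(X)$ cover the minimal toric ambient variety of $X$; intersecting with $X$, the affine opens $X_{\gamma_0} \sei X \cap Z_{\gamma_0}$ form an open cover of $X$. A class $w \in K = \Cl(X)$ therefore lies in $\Pic(X)$ if and only if its image in $\Cl(X_{\gamma_0})$ vanishes for every $\gamma_0 \in \cov(X)$. This reduces the global question to a collection of local ones, one for each minimal $X$-relevant face.

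Next, I would identify the kernel of the restriction map $\Cl(X) \to \Cl(X_{\gamma_0})$ with $Q(\lin_\ZZ(\gamma_0) \cap E)$. On the ambient toric side, $\Cl(Z) = K$ is generated by the classes of the toric prime divisors $D_1,\dots,D_{n+m}$ corresponding to the rays $e_1,\dots,e_{n+m}$ of $\gamma$, and on the affine chart $Z_{\gamma_0}$ precisely those $D_i$ with $e_i \in \gamma_0$ become principal (via the monomial $T^{e_i}$, which is a unit on $Z_{\gamma_0}$). Standard toric geometry then yields $\Cl(Z_{\gamma_0}) = K / Q(\lin_\ZZ(\gamma_0) \cap E)$. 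Pulling back along the closed embedding $X \hookrightarrow Z$, which induces the identity on class groups via the Cox-ring construction, the same quotient description carries over to $X_{\gamma_0}$: the restrictions $D_i|_X = \div(T_i)$ and $D_{n+j}|_X = \div(S_j)$ remain the natural generators of $\Cl(X)$, and a monomial unit on $Z_{\gamma_0}$ restricts to a unit on $X_{\gamma_0}$ with the same homogeneous degree. Intersecting the kernels over $\gamma_0 \in \cov(X)$ then yields the claimed formula.

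The main obstacle lies in Step~2, namely verifying that passing from the toric chart $Z_{\gamma_0}$ to the hypersurface chart $X_{\gamma_0}$ does not alter the kernel: one has to rule out the possibility that new principal homogeneous divisors appear on $X_{\gamma_0}$ beyond those pulled back from $Z_{\gamma_0}$, and that no $D_i|_X$ becomes a non-reduced or reducible divisor. The first point relies on the fact that $\mathcal{R}(X)$ is $K$-factorial and that the units of its relevant localizations are $K$-homogeneous monomials in the $T_i, S_j$; the second follows from the $K$-primality of the generators in the factorial $K$-graded ring $\mathcal{R}(X)$, guaranteed by the hypotheses of Construction~\ref{constr:intquad} together with Example~\ref{ex:quadkfact}. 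With these two facts in hand, the argument is precisely that of~\cite{ArDeHaLa}*{Prop.~3.3.1.5}, specialised to the intrinsic quadric setting.
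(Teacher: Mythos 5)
Your proposal is correct and coincides with the paper's treatment: the paper offers no independent argument for Proposition~\ref{prop:picard} but simply invokes the general result of \cite{ArDeHaLa}*{Sec.~3.3}, and your sketch is precisely that argument (local class groups on the charts $X\cap Z_{\gamma_0}$, $\gamma_0\in\cov(X)$, computed as $K/Q(\lin_\ZZ(\gamma_0)\cap E)$ via the $K$-homogeneous monomial units and $K$-primality of the generators). You also correctly isolate the only points requiring the specific hypotheses of Construction~\ref{constr:intquad}, so nothing is missing.
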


\begin{proposition}
\label{prop:ample}
Let $X = X(q,t,m,u)$ be a standard intrinsic quadric
arising from Construction~\ref{constr:intquad}.
Then the cones of effective, movable, 
semiample and ample divisor classes 
of $X$ in $\Cl_\QQ(X) = K_\QQ$ are given as
$$ 
\Eff(X) \ = \ Q(\gamma),
\qquad
\Mov(X) 
\ = \ 
\bigcap_{\gamma_0 \preceq \gamma \text{ facet }} Q(\gamma_0),
$$
$$ 
\SAmple(X) 
\ = \ 
\bigcap_{\gamma_0 \in \cov(X)} Q(\gamma_0),
\qquad
\Ample(X) 
\ = \ 
\bigcap_{\gamma_0 \in \cov(X)} Q(\gamma_0)^{\circ}.
$$
Moreover, for every $u' \in \Ample(X)$ we have 
$\overline{X}^{ss}(u) = \overline{X}^{ss}(u')$ 
for the sets of semistable points and thus 
$X = X(q,t,m,u')$.
\end{proposition}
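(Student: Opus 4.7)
The plan is to obtain these descriptions as specializations of the general Cox-ring / GIT framework for Mori dream spaces in~\cite{ArDeHaLa}*{Sec.~3.3}, once one verifies that $X = X(q,t,m,u)$ satisfies the required hypotheses. By Construction~\ref{constr:intquad}, $X$ arises as the GIT quotient $\overline{X}^{ss}(u) \quot H$, its Cox ring is $\mathcal{R}(X) = \KK[T_1,\ldots,T_n,S_1,\ldots,S_m]/\bangle{g_{q,t}}$ with $K$-prime homogeneous generators $T_i, S_j$, and $K = \Cl(X)$. Hence the Cox-ring descriptions of the standard cones from \cite{ArDeHaLa} apply directly.

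For the effective cone, every $K$-homogeneous element of $\mathcal{R}(X)$ is a linear combination of monomials in the $T_i, S_j$ of the given degree, so $\Eff(X)$ is the rational cone spanned by the weights $\deg(T_i), \deg(S_j)$, which is precisely $Q(\gamma)$. For the movable cone, a class $w$ is movable iff no $K$-prime generator divides every homogeneous section of degree $w$; equivalently, for each generator there must exist a monomial representative of degree $w$ avoiding it. Dropping the $i$-th variable corresponds to passing to the facet $\gamma_0 \preceq \gamma$ opposite to $e_i$, which yields the stated intersection over facets.

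For the semiample and ample cones I would use the GIT picture together with Remark~\ref{rem:Xfaces}: a class $w$ is semiample iff for every $x \in \overline{X}^{ss}(u)$ some positive multiple of $w$ admits a section not vanishing at $x$. Via the quotient presentation of the toric ambient variety $Z$ and the bijection between toric orbits and faces of $\gamma$, this translates into the condition that $w$ lies in $Q(\gamma_0)$ for every minimal $X$-relevant face $\gamma_0 \in \cov(X)$; replacing each cone by its relative interior yields $\Ample(X)$. The last assertion, $\overline{X}^{ss}(u) = \overline{X}^{ss}(u')$ for $u' \in \Ample(X)$, then follows from the GIT chamber decomposition of $\Mov(X)$: the set of $Z$-relevant faces, and hence of $X$-relevant faces and of the semistable locus, is constant on any GIT chamber, and $\Ample(X)$ is precisely such a chamber.

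The main technical obstacle is the careful bookkeeping that translates the ambient toric picture into statements about $X$: one must verify that the general Cox-ring descriptions of $\Eff, \Mov, \SAmple, \Ample$ from \cite{ArDeHaLa} specialize correctly under the specific face combinatorics coming from $g_{q,t}$, and that the $X$-relevance condition (rather than mere $Z$-relevance) is what produces the covering collection $\cov(X)$ appearing in the last two cones.
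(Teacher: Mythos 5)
Your proposal is correct and matches the paper's treatment: the paper proves this proposition simply by citing the general Mori dream space results of \cite{ArDeHaLa}*{Sec.~3.3}, which is exactly the specialization you carry out. Your additional sketches of why each formula holds (weight cone for $\Eff$, facet intersection for $\Mov$, GIT chambers and the covering collection for $\SAmple$ and $\Ample$) are accurate summaries of those general results.
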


\begin{proposition}
\label{prop:Qfact}
Let $X = X(q,t,m,u)$ be a standard intrinsic quadric
arising from Construction~\ref{constr:intquad}.
Then the following statements are equivalent.
\begin{enumerate}
\item
$X$ is $\QQ$-factorial.
\item
For every $X$-relevant $\gamma_0 \preceq \gamma$
the image $Q(\gamma_0)$ is of full dimension in 
$K_\QQ$.
\item
The semiample cone $\SAmple(X)$ 
is of full dimension in 
$K_\QQ$.
\end{enumerate}
\end{proposition}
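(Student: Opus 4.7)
The plan is to deduce the three equivalences from Propositions~\ref{prop:picard} and~\ref{prop:ample}, after first observing that condition~(ii) only needs to be checked on the covering collection. Indeed, by minimality in the definition of $\cov(X)$, every $X$-relevant face $\gamma_0'$ contains some $\gamma_0 \in \cov(X)$, so $Q(\gamma_0') \supseteq Q(\gamma_0)$ inherits full dimension. Thus (ii) is equivalent to the reduced condition~(ii$'$): $Q(\gamma_0)$ has full dimension in $K_\QQ$ for every $\gamma_0 \in \cov(X)$.

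For (i) $\Leftrightarrow$ (ii$'$), I would use that $X$ is $\QQ$-factorial iff $\Pic(X)$ has finite index in $K = \Cl(X)$. By Proposition~\ref{prop:picard}, $\Pic(X) = \bigcap_{\gamma_0 \in \cov(X)} Q(\lin_\ZZ(\gamma_0) \cap \ZZ^{n+m})$. A finite intersection of subgroups of $K$ has finite index precisely when each summand does, and $Q(\lin_\ZZ(\gamma_0) \cap \ZZ^{n+m})$ has finite index in $K$ exactly when its $\QQ$-span, namely $\lin(Q(\gamma_0))$, equals $K_\QQ$, i.e.\ $Q(\gamma_0)$ is of full dimension.

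For (ii$'$) $\Leftrightarrow$ (iii), I would invoke $\SAmple(X) = \bigcap_{\gamma_0 \in \cov(X)} Q(\gamma_0)$ from Proposition~\ref{prop:ample}. The direction (iii) $\Rightarrow$ (ii$'$) is immediate: full-dimensionality of the intersection forces every cone in it to be full-dimensional. Conversely, every $\gamma_0 \in \cov(X)$ is $Z$-relevant, so $u$ lies in the relative interior of $Q(\gamma_0)$; if moreover $Q(\gamma_0)$ is full-dimensional, then $u$ is a genuine interior point of $Q(\gamma_0)$ in $K_\QQ$, hence also of the finite intersection $\SAmple(X)$, which is thus full-dimensional.

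I do not expect a real obstacle here: once the combinatorial Propositions~\ref{prop:picard} and~\ref{prop:ample} are available, the proof reduces to a short exercise in lattice-and-cone geometry. The only subtlety worth flagging is the passage from covering faces to arbitrary $X$-relevant faces in condition~(ii), which rests entirely on the definition of $\cov(X)$ as the set of \emph{minimal} $X$-relevant faces of $\gamma$.
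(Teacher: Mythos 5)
Your argument is correct. Note, however, that the paper does not actually prove Proposition~\ref{prop:Qfact} itself: it simply refers to \cite{ArDeHaLa}*{Sec.~3.3}, where the statement is established in greater generality (there the argument is local, characterizing $\QQ$-factoriality of the individual pieces $X(\gamma_0)$ via full-dimensionality of $Q(\gamma_0)$). Your route is a legitimate shortcut that stays inside the toolkit the paper has already imported: the reduction of (ii) to the covering collection via minimality of the faces in $\cov(X)$ is sound (every $X$-relevant face contains a minimal one, and $Q$ is monotone under inclusion of faces); the equivalence of (i) with the reduced condition follows from Proposition~\ref{prop:picard} once one accepts that $\QQ$-factoriality of a normal variety with finitely generated $\Cl(X)$ amounts to $\Pic(X)$ having finite index in $\Cl(X)$ --- a standard fact you should perhaps state explicitly, since linear equivalence preserves the Cartier property on normal varieties, so this is really a condition on classes; and the equivalence with (iii) is immediate from Proposition~\ref{prop:ample} together with the fact that $u$ lies in $Q(\gamma_0)^\circ$ for every $Z$-relevant, hence every $X$-relevant, face. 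What your approach buys is a self-contained two-line deduction from the two companion propositions; what the reference's approach buys is the finer, pointwise statement (which pieces of $X$ are $\QQ$-factorial), of which the proposition is the global corollary.
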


\section{Picard numbers one and two:~classification}
\label{sec:proof-thm-1-1}

First, we describe all locally 
factorial instrinsic quadrics
of Picard number one.
Then we show that locally 
factorial intrinsic quadrics
of Picard number two have 
torsion free divisor class group,
see Proposition~\ref{prop:PicXtorsfree}.
Finally, as the first part 
of the proof of Theorem~\ref{thm:smoothrhoX2},
we establish the normal forms 
for the smooth intrinsic quadrics given there.

\begin{proposition}
\label{prop:picardnumber1}
Let $X$ be a locally factorial intrinsic quadric 
of Picard number one.
Then $X$ has divisor class group 
$\Cl(X) \cong \ZZ$ 
and, with suitable integers $n \ge 5$ and 
$m \ge 0$,
the Cox ring of $X$ is given by
\begin{eqnarray*}
\mathcal{R}(X) 
& \cong & 
\KK[T_1,\ldots,T_n,S_1, \ldots, S_m] / \bangle{g},
\\[1ex]
g  
& = & 
\begin{cases}
T_1T_2 + \ldots + T_{n-1}T_n, & n \text{ even},
\\
T_1T_2 + \ldots + T_{n-2}T_{n-1} + T_n^2, & n \text{ odd}.
\end{cases}
\end{eqnarray*}
The $\Cl(X)$-grading of $\mathcal{R}(X)$ is 
given by $\deg(T_i) = \deg(S_j) = 1$ 
for all $i = 1, \ldots, n$ and $j= 1, \ldots, m$.
Thus, $X$ is isomorphic to the 
classical quadric $V(g) \subseteq \PP^{n+m-1}$
with singular locus $V(T_1,\ldots,T_n)$.
In particular, $X$ is smooth if and only if 
$m=0$ holds.
\end{proposition}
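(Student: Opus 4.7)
The plan is to first reduce to a standard intrinsic quadric via Proposition~\ref{prop:iq2siq}, then use Picard number one together with local factoriality to determine the class group and all weights, and finally invoke Example~\ref{ex:quadkfact} to rule out $n \le 4$. After the reduction, write $\mathcal{R}(X) = \KK[T_1,\ldots,T_n,S_1,\ldots,S_m]/\langle g_{q,t}\rangle$ with $n = q+t \ge 3$, and set $w_i \sei \deg(T_i)$, $u_j \sei \deg(S_j)$ in $K \sei \Cl(X)$. Since the Picard number is one, $K_\QQ = \QQ$ and the effective cone is a single ray; hence every $\overline X$-face $\gamma_0$ with $Q(\gamma_0) \ne 0$ is automatically $Z$-relevant.

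A careful look at Remark~\ref{rem:Xfaces} identifies $\cov(X)$ as consisting of the singletons $\gamma_i$ for $i \in \{1,\ldots,q\} \cup \{n+1,\ldots,n+m\}$ (each sits inside a type~(iv) face) together with the two-element type~(iii) faces $\gamma_{i,j}$ for $q < i < j \le q+t$; the singletons $\gamma_{q+1},\ldots,\gamma_{q+t}$ fail to be $\overline X$-faces because $T_i^2$ does not vanish when only $T_i$ is nonzero. Feeding this into Proposition~\ref{prop:smooth}(iv), local factoriality amounts to $\ZZ w_i = K$ for $i \le q$, $\ZZ u_j = K$ for every $j$, and $\ZZ w_i + \ZZ w_j = K$ for $q < i < j \le q+t$. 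If $q \ge 2$ or $m \ge 1$, one such singleton already forces $K$ to be cyclic, hence $K \cong \ZZ$ by the rank-one hypothesis; normalizing so that the effective cone is $\QQ_{\ge 0}$ gives weight $1$ to each generator named in those singletons, and $2w_{q+k} = \mu = w_1 + w_2 = 2$ (when $q \ge 2$) pins $w_{q+k} = 1$ for the squared variables as well. The distinctness clause of Proposition~\ref{prop:quaddiag} then cuts $t$ down to at most one, giving $(q,t) = (n,0)$ or $(n{-}1,1)$. The case $q = 0$ is excluded separately: for $m \ge 1$ the same cyclicity argument yields all $w_i$ equal to $\mu/2$, contradicting distinctness when $t \ge 2$; and for $q = m = 0$ the condition $\ZZ w_i + \ZZ w_j = K$ together with $w_i - w_j \in K[2]$ forces the torsion of $K$ to have order at most two, leaving at most two distinct $w_i$ and so $n \le 2$.

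Finally, to obtain $n \ge 5$ I would contrast the two parts of Example~\ref{ex:quadkfact}: for $q+t \ge 5$ the factor ring is a UFD and there is nothing more to do, whereas for $q+t \le 4$ the only $K$-factorial gradings come from the three matrix shapes listed in~(ii) (the remaining cases $(q,t) = (2,1), (4,0)$ admit none at all). A direct computation of the maximal minors shows that in the Picard-one subcases of those three shapes, $K$ always carries nontrivial torsion of order $8$, $2$ or $4$ respectively, contradicting $K \cong \ZZ$. With $n \ge 5$ secured, the GIT presentation of Construction~\ref{constr:intquad} identifies $X$ with $V(g) \subseteq \PP^{n+m-1}$, Proposition~\ref{prop:smooth}(i) gives the singular locus as $V(T_1,\ldots,T_n) \cong \PP^{m-1}$, and for $m \ge 1$ the singleton $\gamma_{n+1} \in \cov(X)$ contains no $e_i$ with $i \le n$, so Proposition~\ref{prop:smooth}(vi) forces $X$ to be smooth exactly when $m = 0$. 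I expect the main obstacle to be the torsion bookkeeping in ruling out $q = 0$ and $n \le 4$, since these are the only places where the interplay between the normal form of $g_{q,t}$ and the group structure of $K$ enters nontrivially.
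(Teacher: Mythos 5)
Your proof is correct and follows essentially the same route as the paper: reduce to a standard intrinsic quadric, read off the covering collection, use Proposition~\ref{prop:smooth} together with local factoriality to force $\Cl(X)\cong\ZZ$ with all generator degrees equal to $1$ (hence $t\le 1$), and rule out $q=0$ via the torsion obstruction coming from $2w_i=2w_j=\mu$. The only divergence is the exclusion of $n\le 4$: the paper argues that a torsion-free class group forces $\mathcal R(X)$ to be a UFD (citing \cite{ArDeHaLa}*{Prop.~1.4.1.5}), which a quadric of rank at most four is not, whereas you invoke the case list of Example~\ref{ex:quadkfact}~(ii); both are valid.
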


\begin{proof}
Since $X$ is locally factorial, we have 
$\Pic(X) = \Cl(X)$.
In particular, $\Cl_{\QQ}(X)$ is of dimension
one.  
We may assume that $X$ arises from 
Construction~\ref{constr:intquad} with a 
standard $\Cl(X)$-homogeneous quadratic 
polynomial $g_{q,t}$ 
and that the ample cone $\Ample(X)$ is the 
positive ray in $\Cl_{\QQ}(X) = \QQ$.
Consider the faces 
$$
\gamma_0 \ \sei \ \cone(e_i), 
\qquad
i = 1 ,\ldots,q
\quad\text{or}\quad  
i = n+1, \ldots, n+m.
$$
Each of these faces is $X$-relevant.
Since $X$ is locally factorial, 
$Q(e_i)$ generates $\Cl(X)$ as a group,
see Proposition~\ref{prop:smooth}.
In particular, if $q \ge 2$ holds, 
then we can conclude $\Cl(X) = \ZZ$
and 
$$
\deg(T_i) \ =  \ 1,
\quad 
i = 1, \ldots, n,
\qquad
\deg(S_j) \ =  \ 1,
\quad 
j = 1, \ldots, m.
$$
This implies $t \le 1$.
The cases $n=3,4$ are impossible:
then the Cox ring $\mathcal{R}(X)$ 
wouldn't admit unique factorization, 
but it has to do so because of the torsion free 
divisor class group~$\Cl(X)$, 
see~\cite{ArDeHaLa}*{Prop.~1.4.1.5}.
Thus, we also have $n \ge 5$, if  
$q \ge 2$ holds.

We exclude the case $q=0$.
Here, $t \ge 3$ must hold.
Thus, we have the $X$-relevant face 
$\gamma_0 = \cone(e_{1},e_{2})$.
Thus, $\Cl(X)$ is generated by 
$\deg(T_{1})$ and $\deg(T_{2})$
which implies 
$\Cl(X) = \ZZ \oplus \Gamma$ with a cyclic 
group $\Gamma = \ZZ / k \ZZ$ 
and, after applying a suitable automorphism
of $\Cl(X)$, we may assume
$$
\deg(T_{1}) \ = \ (1,\overline{0}),
\qquad
\deg(T_{2}) \ = \ (1,\overline{1}).
$$
Since $2 \deg(T_{1}) = 2 \deg(T_{2})$
holds, we obtain $k = 2$.
But then there is no way to assign 
to $T_3$ a degree in $\Cl(X)$ differing 
from the degrees of $T_1$ and $T_2$;
a contradiction. 
\end{proof}

\begin{remark}
\label{rem:effrho2}
Let $X$ be a $\QQ$-factorial 
standard intrinsic quadric
with $\Cl_\QQ(X)$ of dimension two
arising from 
Construction~\ref{constr:intquad}.
Then the effective cone $\Eff(X)$ is 
uniquely decomposed into three convex sets
$$
\Eff(X) 
\ = \ 
\tp \cup \tau_X^\circ \cup \tm,
$$
such that~$\tp$ and~$\tm$ do not intersect
$\tau_X^\circ = \Ample(X)$ and 
$\tp \cap \tm$ consists of the origin.
Because of $\tau_X^\circ \subseteq \Mov(X)$,
each of $\tp$ and~$\tm$ 
contains at least two (not necessarily different) 
degrees of the Cox ring generators 
$T_1,\ldots,T_n,S_1,\ldots,S_m$.
\begin{center}
    \begin{tikzpicture}[scale=0.6]
    \path[fill=gray!60!] (0,0)--(3.5,2.9)--(0.6,3.4)--(0,0);
    \path[fill, color=black] (1.75,1.45) circle (0.5ex)  node[]{};
    \path[fill, color=black] (1.4,2.4) circle (0.0ex)  node[]{$\tau_X^\circ$};
     \path[fill, color=black] (0.3,1.7) circle (0.5ex)  node[]{};
    \draw (0,0)--(0.6,3.4);
    \draw (0,0) --(-2,3.4);
  \path[fill, color=black] (-1,1.7) circle (0.5ex)  node[]{};
    \path[fill, color=black] (-0.35,2.65) circle (0.0ex)  node[]{\small{$\tau^+$}};
    \draw (0,0)  -- (3.5,2.9);
    \draw (0,0)  -- (3.5,0.5);
  \path[fill, color=black] (1.75,0.25) circle (0.5ex)  node[]{};
    \path[fill, color=black] (2.6,1.2) circle (0.0ex)  node[]{\small{$\tau^-$}};
    \path[fill, color=white] (4,1.9) circle (0.0ex);
  \end{tikzpicture}   
\end{center}
Note that $\tau_X^\circ$ is an open cone of dimension two,
whereas $\tau^-$ as well as $\tau^+$ might be 
one-dimensional.
The closure $\tau_X = \SAmple(X)$ 
of $\tau_X^\circ$ is the 
intersection of two $X$-relevant faces,
see Proposition~\ref{prop:ample},
and thus we find degrees of variables on
its boundary.
Moreover, apart from $\deg(T_n)$ when $t=1$,
no degree of a $T_i$ or a $S_j$ can lie 
in $\tau_X^\circ$, use again 
Proposition~\ref{prop:ample}.
\end{remark}

\begin{proposition}
\label{prop:PicXtorsfree}
Let $X$ be an intrinsic quadric of Picard number two.
If $X$ is locally factorial, 
then $\Cl(X) = \Pic(X) = \ZZ^2$ holds. 
\end{proposition}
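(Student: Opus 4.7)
Since $X$ is locally factorial of Picard number $2$, we have $\Pic(X)=\Cl(X)$ and $K:=\Cl(X)\cong \ZZ^2\oplus T$ for some finite torsion group $T$; the task is to prove $T=0$. By Proposition~\ref{prop:iq2siq}, I may assume $X=X(q,t,m,u)$ arises from Construction~\ref{constr:intquad}. Local factoriality implies $\QQ$-factoriality, so by Proposition~\ref{prop:Qfact} the semiample cone $\tau_X$ is 2-dimensional, and by Remark~\ref{rem:effrho2} the effective cone decomposes as $\Eff(X)=\tp\cup\tau_X^{\circ}\cup\tm$ with at least two generator weights of $\mathcal{R}(X)$ on each of $\tp$ and $\tm$.

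The underlying algebraic observation is: if $K=\ZZ^2\oplus T$ is generated as an abelian group by two elements whose free parts form a $\ZZ$-basis of $\ZZ^2$, then $T=0$. Indeed, once the free part $v\in\ZZ^2$ of an element of $K$ is fixed, the integer coefficients of any such representation are uniquely determined, hence so is its torsion part; since the torsion part must be able to range freely over $T$ for fixed $v$, one forces $T=0$. Consequently, it is enough to exhibit one minimal $X$-relevant face $\gamma_0\in\cov(X)$ whose ray generators provide two weights $w_i,w_j$ that generate $K$ (by Proposition~\ref{prop:smooth}(iv)) and are linearly independent in $K_\QQ$. The plan is to find such a $\gamma_0$ using the description of $\overline{X}$-faces in Remark~\ref{rem:Xfaces}.

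In rank two, a minimal $X$-relevant face has a $2$-dimensional image under $Q$ and therefore at least two rays on opposite sides of $u$. I would analyze the possibilities: a $2$-rayed face $\gamma_{i,j}$ contained in a type~(iv) face directly supplies the required pair; a $3$-rayed face $\gamma_{i,i+1,j}$ of type~(ii) satisfies $w_i+w_{i+1}=2w_j=\mu$, whence $w_{i+1}=2w_j-w_i\in\ZZ w_i+\ZZ w_j$, reducing again to the two-generator situation; and a potential $4$-rayed face of type~(i) $\gamma_{i,i+1,j,j+1}$ can be excluded as non-minimal because the relations $w_i+w_{i+1}=w_j+w_{j+1}=\mu$ combined with the $\QQ$-factoriality of $X$ force either $\gamma_{i,j+1}$ or $\gamma_{i+1,j}$ (both type~(iv) subfaces, hence $\overline{X}$-faces) to be $Z$-relevant. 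The main obstacle is the delicate ``type~(iii) extended'' case $\gamma_0=\gamma_{i,j,k}$ where $w_i,w_j$ are weights of squared variables (so both project to $\mu/2$ in $K_\QQ$ and only yield a single direction, with $w_j-w_i$ a 2-torsion element). Here the argument exploits the existence of \emph{two} boundary rays of $\tau_X$, each of which must be realised by some minimal $X$-relevant face; combining local factoriality (Proposition~\ref{prop:smooth}(iv)) for the two resulting faces, together with a case distinction on $q,t$ and on the position of $\mu/2$ relative to $\tau_X$, yields the missing linearly independent direction and thus forces $T=0$, completing the proof.
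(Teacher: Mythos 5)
Your overall strategy coincides with the paper's: reduce to the elementary fact that a finitely generated abelian group of rank two generated by two elements is free, and hunt for a two\--dimensional $X$-relevant face whose two ray images generate $\Cl(X)$ (you invoke Proposition~\ref{prop:smooth}, the paper invokes Proposition~\ref{prop:picard}; both work). Your treatment of the type~(i) and type~(ii) faces is correct, and you have correctly isolated the one genuinely dangerous configuration, namely the squared variables, whose weights all sit on the single ray through $\tfrac{1}{2}\deg(g)$ and whose pairwise differences are $2$-torsion.

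The gap is that this dangerous case is not actually resolved, and the resolution you announce -- that a further case distinction ``yields the missing linearly independent direction'' -- is not available in the worst configuration. Concretely, suppose every generator weight on one side of the ample cone, say in $\tau^+$, is the weight of a squared variable (this forces $q=0$, since otherwise $w_i+w_{i+1}=\deg(g)$ would place $\deg(g)$ in $\tau^-$ while it lies on a ray in $\tau^+$). Then, as $\gamma_{i,j}$ with $i\le q$, $j$ a squared index is not an $\overline{X}$-face and all $S_j$-weights lie in $\tau^-$, \emph{every} $X$-relevant face contains at least two squared indices, so no pair of weights generating $\Cl(X)$ and spanning $\Cl_\QQ(X)$ exists; there is no ``missing direction'' to be found. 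What saves the proposition is a contradiction, not a second independent weight: local factoriality applied to the $X$-relevant face $\cone(e_1,e_2,e_{n+1})$ shows $\Cl(X)\cong\ZZ^2\oplus\ZZ/2\ZZ$ at worst (because $2\deg(T_1)=2\deg(T_2)$ kills all torsion beyond order two), and then only two elements $w$ of $\Cl(X)$ can satisfy $2w=\deg(g)$, whereas the normal form of Proposition~\ref{prop:quaddiag} requires the $t=n\ge 3$ squared variables to have pairwise distinct degrees. This counting argument is the essential missing step. A secondary, related issue: your enumeration treats minimal $X$-relevant faces as if they were exactly the basic faces of Remark~\ref{rem:Xfaces}, but a minimal $X$-relevant face may be a proper extension of a type~(ii) or type~(iii) face by further squared-variable rays (e.g.\ $\gamma_{i,i+1,j,k}$ with $j,k$ both squared), and each such extension reintroduces the same $2$-torsion ambiguity; all of these must be funneled into the counting argument above rather than into a two\--generator reduction.
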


\begin{proof}
Since $X$ is locally factorial, every Weil divisor 
is principal and thus we have $\Cl(X) = \Pic(X)$. 
The remaining task is to show that $\Pic(X)$ is 
torsion free.
For this, we may assume that $X$ arises from 
Construction~\ref{constr:intquad}.
We claim that it suffices to find a two-dimensional
$X$-relevant face $\gamma_0 \preceq \gamma$.
Indeed, Proposition~\ref{prop:picard} tells us
that $\Pic(X)$ is a subgroup of
$Q(\lin_\QQ(\gamma_0) \cap \ZZ^{n+m}) \subseteq \Cl(X)$.
In particular, $Q(\lin_\QQ(\gamma_0))$ is of dimension 
two.
Consequently, being generated by two elements, 
$Q(\lin_\QQ(\gamma_0) \cap \ZZ^{n+m})$ is torsion free.
Then also $\Pic(X)$ must be torsion free.

Now, the ample cone $\tau_X^\circ$ is two-dimensional
and, according to Remark~\ref{rem:effrho2},
we find two degrees $v_1^-,v_2^- \in \tau^-$
and two degrees $v_1^+,v_2^+ \in \tau^+$ 
stemming from, in total, four of the generators
$T_i$ and $S_j$ such that $v_1^-$ and $v_1^+$
generate the effective cone $\Eff(X)$.
After suitably renumbering the $T_i$ and the $T_j$,
we are in one of the following cases:

\medskip
\noindent
\emph{Case 1.}
We have $\deg(S_1) \in \tau^-$ 
and $\deg(S_2) \in \tau^+$.
Then $\gamma_0 = \cone(e_{n+1},e_{n+2})$ 
is the desired $X$-relevant face.

\medskip
\noindent
\emph{Case 2.}
We have $\deg(S_1) \in \tau^-$ 
and $\tau^+$ contains no degrees of 
variables $S_j$.
If $\deg(T_i) \in \tau^+$ holds 
for some $1 \le i \le q$,
then $\gamma_0 = \cone(e_i,e_{n+1})$ 
is the desired $X$-relevant face.

Suppose that there is no 
$1 \le i \le q$ with 
$\deg(T_i) \in \tau^+$.
Then one and hence all $\deg(T_{q+i})$ 
lie on the ray through $v_1^+$.
This implies $q=0$.
Now, $\cone(e_1,e_2,e_{t+1})$ is 
an $X$-relevant face.
Since $X$ is locally factorial,
the correponding degrees generate
$\Cl(X)$.
We conclude $\Cl(X) = \ZZ^2 \oplus \Gamma$
with a cyclic torsion part $\Gamma = \ZZ/k\ZZ$ 
and, applying a suitable automorphism 
of $\Gamma$, we achieve
$$ 
\deg(T_1) \ = \ (1,0,\overline{0}),
\quad
\deg(T_2) \ = \ (1,0,\overline{1}),
\quad
\deg(S_1) \ = \ (0,1,\overline{0}).
$$
Because of $2\deg(T_1) = 2\deg(T_2)$, we 
obtain $k = 2$. 
Since the $T_i$ must have different degrees 
in $\Cl(X)$, we obtain that there are no 
$T_i$ for $i \ge 3$. 
Thus, no $1 \le i \le q$ with 
$\deg(T_i) \in \tau^+$ is impossible.

\medskip
\noindent
\emph{Case 3.}
The are no $\deg(S_j)$ in 
$\tau^- \cup \tau^+$.
Then we may assume $v_1^- = \deg(T_1)$
and obtain the desired $X$-relevant 
face $\gamma_0 = \cone(e_1,e_i)$
by choosing $i \ne 2$ such that $\deg(T_i)$ 
is one of $v_1^+,v_2^+$.  
\end{proof}

\begin{proof}[Proof of Theorem~\ref{thm:smoothrhoX2}, Part I]
We show that all smooth intrinsic quadrics of 
Picard number $\rho(X)=2$ are isomorphic to 
one of the varieties described in 
Theorem~\ref{thm:smoothrhoX2}.
Proposition~\ref{prop:PicXtorsfree} yields 
$\Cl(X) = \ZZ^2$.
Moreover, according to Proposition~\ref{prop:iq2siq},
we may assume that $X$ is a standard intrinsic 
quadric.
Then the Cox ring of $X$ is given as 
\begin{eqnarray*}
\mathcal{R}(X)
& = & 
\KK[T_1,\ldots,T_n,S_1, \ldots, S_m] / \bangle{g},
\\[1ex]
g  
& = & 
\begin{cases}
T_1T_2 + \ldots + T_{n-1}T_n, & \text{ if } n \text{ is even},
\\
T_1T_2 + \ldots + T_{n-2}T_{n-1} + T_n^2, & \text{ if } n \text{ is odd}.
\end{cases}
\end{eqnarray*}
Note that we have $n \ge 5$, because $\Cl(X)$ 
is torsion free and thus $\mathcal{R}(X)$ 
must be a unique factorization domain.
As outlined in Remark~\ref{rem:effrho2}, 
the effective cone of $X$ is the disjoint 
union of three convex sets,
$$
\Eff(X) \ = \ \tm \cup \tau_X^\circ \cup \tp,
$$
where $\tau_X^\circ \subseteq \Cl_\QQ(X)$ 
is the ample cone.
Since $X$ is smooth, there are no $X$-relevant 
faces of the form $\cone(e_i,e_j)$
with $n+1 \le i < j \le n+m$,
see Proposition~\ref{prop:smooth}.
Consequently the $\deg(S_j)$ 
either all lie in $\tau^-$ or all in $\tau^+$.
After suitably renumbering the variables 
$T_i$ and $S_j$, we are left with the 
following cases:

\medskip

\begin{tikzpicture}[scale=0.6]
    \path[fill=gray!60!] (0,0)--(3.5,2.9)--(0.6,3.4)--(0,0);
    \path[fill, color=black] (1.75,1.45) circle (0.5ex)  node[]{};
    \path[fill, color=black] (1.9,1.45) circle (0ex)  node[below]{\small{$w_1$}};
    \path[fill, color=black] (1.4,2.4) circle (0.0ex)  node[]{$\tau_X^\circ$};
    \path[fill, color=black] (0.3,1.7) circle (0.5ex)  node[]{};
    \path[fill, color=black] (0.35,1.7) circle (0ex)  node[left]{\small{$w_2$}};
    \draw (0,0)--(0.6,3.4);
    \draw (0,0) --(-2,3.4);
    \path[fill, color=black] (-1.2, 2.04) circle (0.5ex)  node[]{};
    \path[fill, color=black] (-0.35,2.7) circle (0.0ex)  node[]{\small{$\tau^+$}};
    \draw (0,0)  -- (3.5,2.9);
    \draw (0,0)  -- (3.5,0.5);
    \path[fill, color=black] (1.75,0.25) circle (0.5ex)  node[]{};
    \path[fill, color=black] (3,1.4) circle (0.0ex)  node[]{\small{$\tau^-$}};
    \path[fill, color=white] (4,1.9) circle (0.0ex);
    \path[fill, color=black] (1,-1) circle (0.0ex)  node[]{\small{(i)}};
\end{tikzpicture}     
\quad
\begin{tikzpicture}[scale=0.6]
    \path[fill=gray!60!] (0,0)--(3.5,2.9)--(0.6,3.4)--(0,0);
    \path[fill, color=black] (1.75,1.45) circle (0.5ex)  node[]{};
    \path[fill, color=black] (1.9,1.45) circle (0ex)  node[below]{\small{$w_1$}};
    \path[fill, color=black] (1.4,2.4) circle (0.0ex)  node[]{$\tau_X^\circ$};
    \path[fill, color=black] (0.3,1.7) circle (0.5ex)  node[]{};
    \path[fill, color=black] (0.35,1.7) circle (0ex)  node[left]{\small{$w_4$}};
    \draw (0,0)--(0.6,3.4);
    \draw (0,0) --(-2,3.4);
    \path[fill, color=black] (-1.2, 2.04) circle (0.5ex)  node[]{};
    \path[fill, color=black] (-0.35,2.7) circle (0.0ex)  node[]{\small{$\tau^+$}};
    \draw (0,0)  -- (3.5,2.9);
    \draw (0,0)  -- (3.5,0.5);
    \path[fill, color=black] (1.75,0.25) circle (0.5ex)  node[]{};
    \path[fill, color=black] (3,1.4) circle (0.0ex)  node[]{\small{$\tau^-$}};
    \path[fill, color=white] (4,1.9) circle (0.0ex);
    \path[fill, color=black] (1,-1) circle (0.0ex)  node[]{\small{(ii)}};
\end{tikzpicture}     
\quad
\begin{tikzpicture}[scale=0.6]
    \path[fill=gray!60!] (0,0)--(3.5,2.9)--(0.6,3.4)--(0,0);
    \path[fill, color=black] (1.75,1.45) circle (0.5ex)  node[]{};
    \path[fill, color=black] (1.9,1.45) circle (0ex)  node[below]{\small{$u_1$}};
    \path[fill, color=black] (1.4,2.4) circle (0.0ex)  node[]{$\tau_X^\circ$};
    \path[fill, color=black] (0.3,1.7) circle (0.5ex)  node[]{};
    \path[fill, color=black] (0.35,1.7) circle (0ex)  node[left]{\small{$w_2$}};
    \draw (0,0)--(0.6,3.4);
    \draw (0,0) --(-2,3.4);
    \path[fill, color=black] (-1.2, 2.04) circle (0.5ex)  node[]{};
    \path[fill, color=black] (-0.35,2.7) circle (0.0ex)  node[]{\small{$\tau^+$}};
    \draw (0,0)  -- (3.5,2.9);
    \draw (0,0)  -- (3.5,0.5);
    \path[fill, color=black] (1.75,0.25) circle (0.5ex)  node[]{};
    \path[fill, color=black] (3,1.4) circle (0.0ex)  node[]{\small{$\tau^-$}};
    \path[fill, color=white] (4,1.9) circle (0.0ex);
    \path[fill, color=black] (1,-1) circle (0.0ex)  node[]{\small{(iii)}};
\end{tikzpicture}   

\noindent
Here, we set $w_i \sei \deg(T_i)$ and $u_j \sei \deg(S_j)$.
Observe that in Case~(iii), we may indeed assume 
$u_1 \in \tau^-$, because 
$\gamma_{2,n+1}$ is an $X$-relevant
face and thus Proposition~\ref{prop:smooth},
allows us to interchange $\tau^-$ and $\tau^+$ 
via a linear coordinate change if necessary.
We now go through the cases, using the notation of
Remark~\ref{rem:Xfaces} for $X$-relevant faces
and writing $\mu = (\mu_1,\mu_2) \in \ZZ^2$
for the degree of $g$.

\medskip

\noindent
\emph{Case~(i):} 
We have $\tau_X = \cone(w_1,w_2)$ with
$w_1 \in \tau^-$ and $w_2 \in \tau^+$.
Then $\mu \in \tau_X$ holds.
Thus, we may assume $w_3 \in \tau^-$
and $w_4 \in \tau^+$.
Applying Proposition~\ref{prop:smooth} 
to~$\gamma_{1,4}$, we see that $w_1,w_4$
form a $\ZZ$-basis for $\ZZ^2$.
By a suitable coordinate change,
we achieve $w_1=(1,0)$ and $w_4=(0,1)$.
Then $w_1+w_2 = w_3+w_4 = \mu$ implies
$w_2 = (\mu_1-1, \mu_2)$
and $w_3 = (\mu_1, \mu_2-1)$.
Like $w_1, w_4$ also $w_3,w_2$ form 
a $\ZZ$-basis for $\Cl(X)$,
being positively oriented, because
$\Eff(X)$ is pointed and we have 
$w_2\in\tau^+$ and $w_3\in\tau^-$.
This implies
$$ 
1 
\ = \ 
\det(w_3,w_2)
\ = \ 
\mu_1 + \mu_2 - 1.
$$
From $\mu \in \tau_X \subseteq \cone(w_1,w_4)$
we infer $\mu_1,\mu_2 > 0$ and conclude
$\mu_1 = \mu_2 = 1$.
In particular, we have 
$w_2=(0,1),\,w_3=(1,0)$
and
$\tau_X=\QQ^2_{\ge0}$. 
Moreover, $\mu=(1,1)$ implies that 
$n$ is even.
Suitably renumbering the $T_i$ with
$i \ge 5$,
we achieve $w_i \in \tau^-$ and 
$w_{i+1} \in \tau^+$ 
for $i = 5, 7, \ldots, n-1$.
Then, for every odd $i$, 
Proposition~\ref{prop:smooth} 
and homogeneity of $g$
provide us with the conditions
$$ 
\det(w_i,w_2) = 1,
\qquad
w_i + w_{i+1} = \mu = (1,1),
\qquad
\det(w_1,w_{i+1}) = 1.
$$
We conclude $w_i = (1,0)$ and $w_{i+1} = (0,1)$
for all $i = 5, 7, \ldots, n-1$.
The weights $u_j = \deg(S_j)$ are contained 
either all in $\tau^-$ or all in $\tau^+$.
We may assume all in $\tau^+$. 
Applying Proposition~\ref{prop:smooth} to
$\gamma_{1,j}$, where $j = 1, \ldots, m$ 
yields $u_j=(a_j,1)$ with some 
$a_j \in \ZZ_{\le 0}$.
A suitable linear coordinate change
in $\ZZ^2$ leads to Type~4.

\medskip
\noindent
\emph{Case~(ii):} 
We have $\tau_X = \cone(w_1,w_4)$ with
$w_1 \in \tau^-$ and $w_4 \in \tau^+$.
If $w_2 \in \tau^+$ holds, then 
we are in Case~(i) just settled.
We treat the case $w_2 \in \tau^-$.
Since $\gamma_{1,4}$ is an $X$-relevant 
face, Proposition~\ref{prop:smooth} 
says that $w_1,w_4$ form a $\ZZ$-basis 
of $\ZZ^2$.
Thus, a suitable linear coordinate 
change in $\ZZ^2$ yields $w_1=(1,0)$ 
and~$w_4=(0,1)$.

As $\gamma_{2,4}$ is an $X$-relevant face,
we can apply Proposition~\ref{prop:smooth}
again and see that $w_2,w_4$ is a positively 
oriented $\ZZ$-basis of $\ZZ^2$. 
Thus, $\det(w_2,w_4) = 1$ holds and we 
conclude $w_2 = (1,x)$ with some $x \in \ZZ_{\le 0}$.
By the same arguments, if $v^+ \in \tau^+$ 
is a degree of any of the $T_i \ne T_4$ or 
the $S_j$,
we see that $w_1,v^+$ is a positively oriented
$\ZZ$-basis and conclude $v^+ = (y,1)$
with some $y \in \ZZ_{\le 0}$. 
Moreover, arguing further along this line gives 
$$ 
1 = \det(w_2,v^+) = 1 -xy,
\qquad\qquad
1 
= 
\det(w_3,v^+) 
=
2 - xy + y,
$$
where we use $w_3 = \mu - w_4 = w_1+w_2-w_4$ 
for the last equality. 
This implies $x=0$ and $y=-1$ and thus 
$w_2 = (1,0)$ and $v^+ = (-1,1)$.
We conclude $\mu = w_1+w_2 = (2,0)$ 
and $w_3 = \mu-w_4 = (2,-1)$.
So far, the situation looks as follows:
\begin{center}
\begin{tikzpicture}[scale=0.6]
\path[fill=gray!60!] (0,0)--(0,1.8)--(2.8,1.8)--(2.8,0)--(0,0);
\path[fill, color=black] (1.5,1.1) circle (0.0ex)  node[right]{\small $\tau_X^\circ$};
\draw (-2,0)--(3,0);
\draw (0,-1)--(0,2);
\path[fill, color=black] (1,0) circle (0.5ex)  node[below]{\tiny $w_1,w_2$};
\path[fill, color=black] (2,0) circle (0.5ex)  node[below]{\tiny $\mu$};
\path[fill, color=black] (0,1) circle (0.5ex)  node[right]{\tiny $w_4$};
\path[fill, color=black] (-1,1) circle (0.5ex)  node[below]{\tiny $v_+$};
\path[fill, color=black] (2,-1) circle (0.5ex)  node[right]{\tiny $w_3$};
%
\end{tikzpicture}   
\end{center}
In particular, if $n$ is odd, we must have 
$w_n = (1,0)$. 
Because of $\mu \in \tau^-$, we can 
renumber the other $T_i$ with $i \ge 5$
such that $w_i \in \tau^-$ holds for all odd $i$.
Now, consider any odd $i$ with $5 \le i < n$.
Because of $\mu = (2,0)$ we either 
have $w_i = w_{i+1} = (1,0)$ 
or $w_{i+1} =  v^+ = (-1,1)$ and $w_i = (3,-1)$.
The second case is excluded, because then 
$\gamma_{4,i}$ is an $X$-relevant face, 
contradicting the smoothness condition
of Proposition~\ref{prop:smooth}.
Thus, $w_i = (1,0)$ holds for all $i \ge 5$.
Consequently, there must be at least one 
$u_j$ and all $u_j$ equal $v^+ = (-1,1)$.
A suitable linear coordinate change in $\ZZ^2$ 
and renumbering the variables leads to Type~3.

\medskip

\noindent
\emph{Case~(iii):} 
We have $\tau_X = \cone(u_1,w_2)$
with $u_1\in\tau^-$ and $w_2\in\tau^+$.
Then $\gamma_{2,n+1}$ is an $X$-relevant 
face.
By Proposition~\ref{prop:smooth}, we 
achieve $u_1=(1,0)$ and $w_2=(0,1)$
via a suitable linear coordinate change.
We distinguish the following two subcases.

First assume $w_1 \in \tau^+$.
Then $\mu \in \tau^+$ holds.
Thus, we may assume that all $w_i$ 
with $i$ odd are contained
in~$\tau^+$. 
Proposition~\ref{prop:smooth} shows that 
$u_1,w_i$ is a $\ZZ$-basis for the odd 
$i < n$ and thus $w_i = (x_i,1)$ 
holds in these cases, 
where $x_i \in \ZZ_{\le 0}$ due to 
$w_i \in \tau^+$.  
In particular, we have
$$
\mu = w_1 + w_2=(x_1, 2),
\qquad
w_{i+1} = \mu - w_i \ = \ (x_1-x_i,1),
$$ 
where $i < n$ is odd.
Thus, we obtain $w_1, w_2, \ldots, w_n \in \tau^+$.
Consequently, $m \ge 2$ holds.
Because of $u_1 \in \tau^-$, we have 
$u_j \in \tau^-$ for all $j = 1, \ldots, m$.
Moreover $u_j,w_2$ is a $\ZZ$-basis due to
Proposition~\ref{prop:smooth} and thus 
$u_j = (1,y_j)$ holds, where $y_j \in \ZZ_{\le 0}$.
Repeating the same argument with all pairings
$u_j,w_i$ yields $w_i = (0,1)$ for all $i$ 
or $u_j = (1,0)$ for all $j$.
Applying a suitable linear coordinate change, 
we arrive at Type~1 or Type~2, respecitvely.

Now assume $w_1 \in \tau^-$.
Then $\mu \in \tau_X \cup \tau^-$ holds.
Suitably renumbering the $T_i$, we achieve 
$w_i \in \tau^-$ for all odd $i < n$.
Moreover, as all $u_j$ lie in $\tau^-$
and there must be the degree of a second
variable in $\tau^+$, we may assume 
$w_4 \in \tau^+$.
Proposition~\ref{prop:smooth} applied 
to the $X$-relevant faces 
$\gamma_{2,i}$ for the $i \ge 3$ with 
$w_i \in \tau^-$ 
and 
$\gamma_{i,n+1}$ for the $i$ with 
$w_i \in \tau^+$
shows
$$ 
w_i 
\ = \ 
\begin{cases}
(1,y_i) \text{ with }  y_i \in \ZZ_{\le 0} & \text{if } w_i \in \tau^-,
\\
(x_i,1) \text{ with }  x_i \in \ZZ_{\le 0} & \text{if } w_i \in \tau^+.
\end{cases}
$$
unless $i=1$ or $i=n$ with $n$ odd.
Now consider any even $i$ with 
$4 \le i \le n$.
Then the degree of $g$ is 
$\mu = w_3+w_4 = (1+x_4,y_3+1)$.
Because of $\mu \in \tau_X \cup \tau^-$, 
we conclude $x_4 = 0$ and obtain
$\mu = (1,y_3+1)$. 
In particular, we see that $n$ is even and 
$w_i \in \tau^+$ holds for all even $i$.
Moreover,
$w_1 = \mu - w_2  = w_3$ holds
and for every even~$i$ we have
$$ 
(1,y_3+1) 
\ = \ 
\mu
\ =  \
w_{i-1}+w_i
\ = \ 
(1+x_i,y_{i-1}+1).
$$
Consequently, $x_i=0$ and thus $w_i=(0,1)$ 
holds for all even $i$.
Thus, for the odd~$i$, we obtain 
$w_i = (1,y_3)$.
Finally $u_j,w_2$ is a $\ZZ$-basis
for all $j = 1, \ldots, m$ and thus
we have $u_j = (1,a_j)$ with $a_j \in \ZZ_{\le 0}$.
So, a suitable linear coordinate change 
leads to Type~4.
\end{proof}

\section{Picard number two:~geometry}

We discuss geometric aspects of the 
intrinsic quadrics listed in Theorem~\ref{thm:smoothrhoX2}.
First, we enter their Mori theory and
prove the still open geometric statements 
made in Theorem~\ref{thm:smoothrhoX2}.
Then we figure out the Fano examples 
from Theorem~\ref{thm:smoothrhoX2}
and thus prove Corollary~\ref{cor:FanosAlmFanos}.
Moreover, we obtain base point freeness for
numerically effective divisors,
see Corollary~\ref{cor:bpfsat},
and thus can verify Fujita's freeness
conjecture for all smooth intrinsic quadrics 
of Picard number at most two,
see Corollary~\ref{cor:bpfquadricssatrhoX2}.
Finally, we discuss Mukai's conjecture in 
Example~\ref{cor:Mukai}. 

The morphisms providing the geometric 
descriptions of Theorem~\ref{thm:smoothrhoX2}
are examples of so called elementary 
contractions~\cite{Ca}.
We obtain them by looking at the 
\emph{Mori chamber decomposition}, 
which in our case is easy to compute. 
Before entering the details, let us 
briefly recall some general background
background.
Every effective divisor $D$ on a normal projective 
variety~$X$ defines a rational map
$$ 
\varphi_D \colon
X \ \dasharrow \ X(D),
\qquad\qquad
X(D) 
\ \sei \ 
\Proj
\Bigl( 
\bigoplus_{n \in \ZZ_{\ge 0}} \Gamma(X,\mathcal{O}_X(nD))
\Bigr).
$$  

Two divisors are called \emph{Mori equivalent} if they define 
the same map.
The Mori chamber decomposition is the 
subdivision of the effective cone into the classes arising 
from Mori equivalence.
In the case of a Mori dream space $X$,
there is a fundamental connection to 
geometric invariant theory, as observed 
by Hu and Keel~\cite{HuKe}.
Namely, we have the action of 
the quasitorus 
$H = \Spec \, \KK[\Cl(X)]$ 
on $\overline{X} \sei \Spec \, \mathcal{R}(X)$
and thus the \emph{GIT-fan} 
$\Lambda_X$ describing the variation 
of GIT-quotients in the sense that two 
classes $w_1,w_2 \in \Cl(X)$ 
define the same sets of semistable points 
$\overline{X}^{ss}(w_i)$ 
if and only if they lie in the relative 
interior of a common cone $\lambda \in \Lambda_X$.
Now, the crucial observation is
that, inside the moving cone,
the Mori chambers of $X$  
are the precisely the relative 
interiors of the cones of the GIT-fan.

For a Mori dream space 
$X = \overline{X}^{ss}(u) \quot H$,
the cone $\lambda(u) \in \Lambda_X$ 
containing $u$ in its relative interior, 
is the semiample cone of $X$.
A divisor $D$ defines a morphism 
$\varphi_D \colon X \to X(D)$
if and only if for the class $w$ of $D$,
the associated cone $\lambda(w) \in \Lambda_X$ 
is a face of $\lambda(u)$.  
In this case, $\varphi_D \colon X \to X(D)$
is called a \emph{contraction} and, in the 
GIT picture, $\varphi_D$ is the induced 
map of GIT-quotients making the following 
diagram commutative:
$$ 
\xymatrix{
{\overline{X}}^{ss}(u)
\ar@{}[r]|\subseteq
\ar[d]_{\quot H}
&
{\overline{X}}^{ss}(w)
\ar[d]^{\quot H}
\\
X
\ar[r]_{\varphi_D}
&
X(D)
}
$$
A contraction $\varphi \colon X \to X(D)$ 
is called \emph{elementary} if $X(D)$ is 
of Picard number one less than~$X$.
There are three possibilities for such an 
elementary contraction, according to the 
possible positions of the class of $D$ in the 
effective cone: 
\begin{itemize}
\item
The class of $D$ lies on the boundary of $\Eff(X)$.
Then $\varphi_D$ is of \emph{fiber type}, i.e., 
the dimension of $X(D)$ is strictly less than 
that of $X$.
\item
The class of $D$ lies on a boundary 
of $\Mov(X)$,
but not on the boundary of $\Eff(X)$. 
Then $\varphi_D$ is a \emph{birational divisorial contraction}, 
i.e., it is birational and contracts precisely a
prime divisor of $X$.
\item
The class of $D$ lies in the interior $\Mov(X)$.
Then $\varphi_D$ is a \emph{birational small 
contraction}, i.e., it is birational and contracts 
only a subvariety of codimension at least two.
\end{itemize}

\goodbreak

\begin{remark}
Construction~\ref{constr:intquad}
produces an intrinsic quadric $X$ 
in an ambient toric variety $Z$ 
by passing to a quotient
of  the action of $H$ on $\overline{X}$ 
and $\overline{Z} = \KK^{n+m}$.
The cones of the (finite) GIT-fans 
$\Lambda_X$ and $\Lambda_Z$ 
in $K_\QQ = \Cl_\QQ(X) = \Cl_\QQ(Z)$ are  
$$ 
\lambda_X(w) 
\ = \ 
\bigcap_{
\tiny
\begin{array}{c}
w \in Q(\gamma_0), 
\\
\overline{X}(\gamma_0) \ne \emptyset
\end{array}
}
Q(\gamma_0),
\qquad\qquad
\lambda_Z(w) 
\ = \ 
\bigcap_{w \in Q(\gamma_0)}
Q(\gamma_0),
$$
respectively, 
where $w$ runs through $K_\QQ$ and 
Remark~\ref{rem:Xfaces} tells which are
the faces $\gamma_0 \preceq \gamma = \QQ^{n+m}$ 
such that $\overline{X}(\gamma_0)$ is non-empty.
In particular, the fan~$\Lambda_Z$ 
refines the fan $\Lambda_X$,
which in turn connects the Mori theory of $X$ 
with that of $Z$.
\end{remark}

\begin{proof}[Proof of Theorem~\ref{thm:smoothrhoX2}, Part~II]
We first discuss the varieties $X$ of Types~1, 2 and~4. 
In these cases, the configurations of weights and the 
semiample cone are of the following shape:
\begin{center}
\begin{tikzpicture}[scale=0.6]

\draw[thin] (0,0)--(5,1);
\path[fill=gray!60!] (0,0)--(5,0)--(5,1)--(0,0);
\path[fill, color=black] (3.7,0.4) circle (0.0ex)  node[right]{\small $\tau_X$};

\draw (-1,0)--(6.25,0);
\draw (0,-1)--(0,2);

\path[fill, color=black] (1,0) circle (0.5ex);
\path[fill, color=black] (0,1) circle (0.5ex);
\path[fill, color=black] (1,1) circle (0.5ex);

\path[fill, color=black] (2.1,1) circle (0.3ex);
\path[fill, color=black] (2.5,1) circle (0.3ex);
\path[fill, color=black] (2.9,1) circle (0.3ex);

\path[fill, color=black] (4,1) circle (0.5ex);
\path[fill, color=black] (5,1) circle (0.5ex);
\path[fill, color=black] (5,.6) circle (0ex);
\end{tikzpicture}   
\end{center}

\noindent
We work with the toric embedding $X \subseteq Z$ 
provided by Construction~\ref{constr:intquad}.
From Remark~\ref{rem:Xfaces}, we infer
$\tau_X = \tau_Z$ for the semiample 
cones. 
Thus, for the divisor class $w = (1,0)$,
a representing toric divisor $E$ on $Z$ 
and its restriction $D$ on $X$, we obtain 
a commutative diagram 
$$ 
\xymatrix{
X
\ar@{}[r]|\subseteq
\ar[d]_{\varphi_D}
&
Z
\ar@{}[r]|{\cong\qquad\qquad}
\ar[d]^{\varphi_E}
& 
{\PP(\mathcal{O}(a_1) \oplus \ldots \oplus \mathcal{O}(a_k))}
\ar[d]
\\
X(D)
\ar@{}[r]|\subseteq
&
Z(E)
\ar@{}[r]|{\cong\qquad\qquad}
& 
{\PP_{l-1}}
}
$$
where the inclusions are closed embeddings,
$l$ is the number of coordinates of 
$\overline{Z} = \KK^{n+m}$ of degree
$(1,0)$, call them $f_1, \ldots, f_l$, 
and $k$ is the number of remaining 
coordinates, 
call them $h_1, \ldots, h_k$.
So, we have $n+m = l+k$.
In terms of homogeneous coordinates
on $\overline{Z}$ and on $\overline{Z(E)} = \KK^{l}$,
local trivializations of the bundle projection
$\varphi_E \colon Z \to Z(E)$ are given by
$$ 
\xymatrix{
{\KK^{n+m}_{f_i} \setminus V(h_1, \ldots, h_k)}
\ar[rrrr]^{\left(f_1,\ldots,f_l,\frac{h_1}{f_i^{a_1}}, \ldots,\frac{h_k}{f_i^{a_k}}\right)}
\ar[drr]_{(f_1,\ldots,f_l)\quad}
&&&&
{\KK^{l}_{f_i} \times (\KK^{k} \setminus \{0\})}
\ar[dll]^{\pr_{\KK^{l}}}
\\
&& 
{\KK^{l}_{f_i}}
&& 
}
$$

If $X$ is of Type~1, then $l=n$ 
and $k=m$ hold,
the $f_i$ are the variables $T_i$ and 
the~$h_j$ are the variables $S_j$.
We directly see that $X$ maps onto  
$V(g) \subseteq \PP_{n-1}$ and that
$X = \varphi_E^{-1}(V(g))$ holds.
Thus, $\varphi_D \colon X \to X(D)$ 
is a bundle projection as wanted.

If $X$ is of Type~2, then $l=m$ 
and $k=n$ hold,
the $f_i$ are the variables $S_i$ and 
the~$h_j$ are the variables $T_j$.
Using the fact that the relation $g$ 
is $K$-homogeneous, we see that the above 
local trivializations respect $g$.
We conclude that $X(D) = Z(E) = \PP_{m-1}$ 
holds and that locally with respect to the 
base~$X(D)$, the variety $X$ is a product 
of the smooth quadric $V(g) \subseteq \PP_{n-1}$ 
and $\PP_{m-1}$.

If $X$ is of Type~4, then $l=n/2$ 
and $k=m+n/2$ hold,
the $f_i$ are the variables~$T_i$ with 
$i$ odd and the $h_j$ subsume the 
variables $T_i$ with $i$ even as well 
as the variables~$S_j$.
Using the above local trivializations, 
we see that $X$ projects onto the base, that means that 
$X(D) = Z(E) = \PP_{n/2-1}$ holds.
Morover, on each fiber $\varphi_E^{-1}([z])$, 
the relation~$g$ becomes a linear form 
in the coordinates $T_i$ with $i$ even 
and thus cuts out a hyperplane of 
$\varphi_E^{-1}([z]) \cong \PP_{n/2+m-1}$.
Consequently, $\varphi_D \colon X \to X(D)$ 
is as claimed.

Finally, let $X$ be of Type~3.
Observe, that the ambient toric variety $Z$ 
is not smooth in this case.
We take the divisors $E$ on $Z$ and 
$D$ on $X$ corresponding to the 
generator~$T_3$ of the respective Cox 
rings.
Then $\varphi_E \colon Z \to Z(E)$ and 
$X \to X(E)$ contract the respective 
divisors defined by~$T_1$.
We obtain that $Z(E)$ is the weighted
projective space $\PP_{(1,\ldots,1,2)}$
of dimension $n+m-2$
and $X(D) \subseteq Z(E)$ is defined 
by the equation 
$g-T_1T_2 + T_2 = 0$.
This gives $X(D) = \PP_{n+m-3}$.
Moreover, the fan of the ambient toric 
variety $Z$ is obtained from the fan of  
$Z(E) = \PP_{(1,\ldots,1,2)}$ by barycentric
subdivsion of the cone over the rays 
corresponding to $u_1,\ldots,u_m,w_2$.
We conclude that the center of 
the modification $X \to X(D)$ is 
the smooth quadric 
$V(g-T_1T_2,S_1,\ldots,S_m) \subseteq \PP_{n+m-3}$.
The fact that $X \to X(D)$ is indeed the 
blowing-up is checked directly in the
affine charts of $X(D) = \PP_{n+m-3}$.
\end{proof}

We turn to the (almost) Fano varieties 
among the smooth intrinsic 
quadrics of Picard number two.
Adapting~\cite{ArDeHaLa}*{Prop.~3.3.3.2}
leads to the following 
explicit desciption of the anticanonical
class.

\begin{proposition}
\label{prop:antican}
Let $X = X(q,t,m,u)$ be a standard intrinsic quadric
arising from Construction~\ref{prop:quaddiag}.
Then the anticanonical class of $X$ is given by
$$ 
- \mathcal{K}_X
\ = \ 
\frac{q-2}{2} \deg(g_{q,t}) 
+ 
\sum_{i=1}^{t} \deg(T_{q+i})
+ 
\sum_{k=1}^{m} \deg(S_k)
\ \in \ 
K
\ = \ 
\Cl(X). 
$$
\end{proposition}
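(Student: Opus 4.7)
The plan is to derive the formula as a direct specialization of the general anticanonical class formula for Mori dream spaces whose Cox ring is a complete intersection, which is exactly \cite{ArDeHaLa}*{Prop.~3.3.3.2}. For a normal projective variety $X$ with Cox ring presented as $\mathcal{R}(X) = \KK[T_1,\ldots,T_s]/\langle f_1,\ldots,f_r\rangle$, where the $T_i$ and $f_j$ are $\Cl(X)$-homogeneous and the ideal is generated by a regular sequence, that proposition yields
$$
-\mathcal{K}_X \ = \ \sum_{i=1}^{s} \deg(T_i) \ - \ \sum_{j=1}^{r} \deg(f_j) \ \in \ \Cl(X).
$$
In the situation of Construction~\ref{constr:intquad}, the hypotheses on the $K$-grading (pointed, almost free, full-dimensional moving cone) together with the $K$-factoriality ensured by condition~\ref{ex:quadkfact}(i) or~(ii) and the identification $K = \Cl(X)$ give precisely the setting in which this formula applies, with $s = n+m$, $r = 1$ and $f_1 = g_{q,t}$.

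The second step is a routine bookkeeping using the special shape of $g_{q,t}$. Since $g_{q,t} = T_1T_2 + \ldots + T_{q-1}T_q + T_{q+1}^2 + \ldots + T_{q+t}^2$ is $K$-homogeneous of degree $\mu \sei \deg(g_{q,t})$, we read off
$$
\deg(T_{2k-1}) + \deg(T_{2k}) \ = \ \mu \quad (1 \le k \le q/2),
\qquad
2\deg(T_{q+i}) \ = \ \mu \quad (1 \le i \le t).
$$
In particular, summing the first family of identities yields
$$
\sum_{i=1}^{q} \deg(T_i) \ = \ \frac{q}{2}\,\mu.
$$
Substituting this into the general formula gives
$$
-\mathcal{K}_X
\ = \
\frac{q}{2}\mu + \sum_{i=1}^{t} \deg(T_{q+i}) + \sum_{k=1}^{m} \deg(S_k) - \mu
\ = \
\frac{q-2}{2}\,\deg(g_{q,t}) + \sum_{i=1}^{t} \deg(T_{q+i}) + \sum_{k=1}^{m} \deg(S_k),
$$
which is the claimed identity.

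The only real content lies in verifying that \cite{ArDeHaLa}*{Prop.~3.3.3.2} is applicable, i.e.\ that the standard intrinsic quadric truly fits its framework. This reduces to checking that $g_{q,t}$ forms a regular sequence (trivial, as it is a single nonzero element in a domain) and that the $\Cl(X)$-grading on $\mathcal{R}(X)$ coincides with the grading used there; both are built into Construction~\ref{constr:intquad}. The combinatorial manipulation with the pairs and squares in $g_{q,t}$ is then the only remaining computation and is, as seen above, immediate.
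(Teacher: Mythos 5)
Your proposal is correct and follows exactly the route the paper intends: the paper itself only says that the formula is obtained by adapting \cite{ArDeHaLa}*{Prop.~3.3.3.2}, and your argument spells out precisely that specialization (with $s=n+m$, $r=1$, $f_1=g_{q,t}$) together with the correct bookkeeping $\sum_{i=1}^{q}\deg(T_i)=\tfrac{q}{2}\deg(g_{q,t})$ coming from $K$-homogeneity. Nothing is missing.
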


\begin{proof}[Proof of Corollary~\ref{cor:FanosAlmFanos}]
In the situation of Theorem~\ref{thm:smoothrhoX2}, 
the formula of Proposition~\ref{prop:antican} simplifies 
to
$$
-\mathcal{K}_X
\ = \ 
\frac{n-2}{2} \deg(g) + \deg(u_1) + \ldots + \deg(u_m).
$$
The variety $X$ is Fano if and only if $-\mathcal{K}_X$ lies
in the interior of the semiample cone $\tau_X$ specified 
in the Theorem and $X$ is truly almost Fano if $-\mathcal{K}_X$ 
lies on the boundary of $\tau_X$
and in the interior of $\Eff(X)$.
One immediately computes:

\begin{center}
\renewcommand{\arraystretch}{1.8} 
\begin{tabular}{c|c|c}
Type
& 
$-\mathcal{K}_X$
& 
$\tau_X$
\\
\hline
1
&
$
\left({n-2 \atop 0}\right)
+
\left({a_1 + \ldots +a_m \atop m}\right)
$
&
$
\cone 
\left(
\left({1 \atop 0}\right),
\left({\alpha \atop 1}\right)
\right)
$
\\
\hline
2
&
$
\frac{n-2}{2}\left({\alpha \atop 2}\right)
+
\left({m \atop 0}\right)
$
&
$
\cone 
\left(
\left({1 \atop 0}\right),
\left({\alpha \atop 1}\right)
\right)
$
\\
\hline
3
&
$
{\scriptstyle (n-2)}
\left({1 \atop 1}\right)
+
\left({m \atop 0}\right)
$
&
$
\cone 
\left(
\left({1 \atop 1}\right),
\left({2 \atop 1}\right)
\right)
$
\\
\hline
4
&
$
\frac{n-2}{2}\left({a+1 \atop 1}\right)
+
\left({a_1 + \ldots + a_m \atop m}\right)
$
&
$
\cone 
\left(
\left({1 \atop 0}\right),
\left({\alpha \atop 1}\right)
\right)
$
\end{tabular}
\end{center}
From this, we directly derive the Fano and truly almost Fano 
conditions.
Note that for Type~4, we must have $w_2 = (\alpha,1)$ 
in order to obtain a (truly almost) Fano variety
and $w_2 = (0,1)$ produces further truly almost Fano 
varieties.
\end{proof}

\begin{corollary}
\label{cor:bpfsat}
Let $X$ be a smooth intrinsic quadric of 
Picard number at most two.
Then every numerically effective 
divisor on $X$ is base point free.
\end{corollary}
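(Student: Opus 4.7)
The plan is to verify the statement by applying a Cox-ring criterion for base point freeness to the explicit classification from Proposition~\ref{prop:picardnumber1} and Theorem~\ref{thm:smoothrhoX2}. The criterion reads: for a standard intrinsic quadric $X$ as in Construction~\ref{constr:intquad}, a divisor class $w \in \Cl(X)$ is base point free if and only if for every $\gamma_0 \in \cov(X)$ one has $w \in M_{\gamma_0}$, where
$$
M_{\gamma_0} \;\sei\; \sum_{e_i \in \gamma_0} \ZZ_{\ge 0}\, Q(e_i).
$$
This holds because $\Gamma(X, \mathcal{O}_X(D))$ with $[D] = w$ is spanned by the images in $\mathcal{R}(X)_w$ of monomials in $T_1, \ldots, T_n, S_1, \ldots, S_m$ of weight~$w$, and such a monomial is nonvanishing on the piece $X(\gamma_0)$ precisely when only variables indexed by $\gamma_0$ appear in it. Since $X$ is a Mori dream space, the nef cone coincides with the semiample cone $\tau_X$ of Proposition~\ref{prop:ample}, so the task reduces to proving $\tau_X \cap \Cl(X) \subseteq \bigcap_{\gamma_0 \in \cov(X)} M_{\gamma_0}$.

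The Picard number one case is immediate from Proposition~\ref{prop:picardnumber1}: $\Cl(X) = \ZZ$, $\tau_X = \QQ_{\ge 0}$, every generator has degree~$1$, and any $X$-relevant $\gamma_0$ contains some $e_i$, so $T_i^k$ realizes every class $k \ge 0$. For Picard number two I would treat Types~1--4 of Theorem~\ref{thm:smoothrhoX2} in turn. In Types~1, 2 and~4, Remark~\ref{rem:Xfaces} together with the explicit weight data shows that $\cov(X)$ consists entirely of two-dimensional faces, and the smoothness condition in Proposition~\ref{prop:smooth}(vi) forces the two generator weights of each such $\gamma_0$ to be a $\ZZ$-basis of $\ZZ^2 = \Cl(X)$. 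Consequently $M_{\gamma_0} = Q(\gamma_0) \cap \ZZ^2$, and the intersection over $\cov(X)$ collapses to $\tau_X \cap \ZZ^2$ because the extremal slope $\alpha$ bounding $\tau_X$ is attained by some generator weight; for instance, in Type~1 one finds $M_{\cone(e_i, e_{n+j})} = \{(a,b) : b \ge 0,\, a \ge a_j b\}$, and intersecting over $j = 1, \ldots, m$ yields $\{(a,b) : b \ge 0,\, a \ge \alpha b\} = \tau_X \cap \ZZ^2$.

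The main obstacle is Type~3, where the ambient toric variety $Z \cong \PP_{(1,\ldots,1,2)}$ is singular and the pattern above partially breaks down: when $n$ is odd, an additional three-dimensional type~(ii) face $\gamma_{1,2,n}$ of Remark~\ref{rem:Xfaces} enters $\cov(X)$. The covering collection consists of the two-dimensional minimal faces $\cone(e_1, e_{n+j})$, $\cone(e_i, e_{n+j})$ for $i \ge 3$, and $\cone(e_2, e_i)$ for $i \ge 3$ (together with $\gamma_{1,2,n}$ in the odd case), with monoids $\ZZ^2_{\ge 0}$, $\{(a,b) : a \ge b \ge 0\}$, $\{(a,b) : b \le a \le 2b\}$, and $\ZZ_{\ge 0}(0,1) + \ZZ_{\ge 0}(2,1) + \ZZ_{\ge 0}(1,1)$ respectively. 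The third monoid already coincides with $\tau_X \cap \ZZ^2$; the crucial combinatorial step is to check that the last monoid equals the full lattice cone $\{(a,b) \in \ZZ^2 : 0 \le a \le 2b\}$, which is verified by splitting $a = 2\lfloor a/2 \rfloor + (a \bmod 2)$ to choose nonnegative integer coefficients. Intersecting the four monoids then yields $\tau_X \cap \ZZ^2 = \{(a,b) : 0 \le b \le a \le 2b\}$, completing the argument.
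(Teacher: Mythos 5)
Your proposal is correct and follows essentially the same route as the paper: both reduce to the combinatorial description $\BPF(X)=\bigcap_{\gamma_0\in\cov(X)}Q(\gamma_0\cap\ZZ^{n+m})$ from \cite{ArDeHaLa}, observe that in Types~1, 2 and~4 (and Type~3 with $n$ even) all minimal $X$-relevant faces are two-dimensional with generator weights forming a $\ZZ$-basis of $\ZZ^2$, and handle the single extra face $\gamma_{1,2,n}$ in Type~3 with $n$ odd by checking that the monoid generated by $(0,1)$, $(2,1)$, $(1,1)$ is saturated. The only difference is presentational: the paper phrases the reduction via saturation of the individual monoids, while you verify the containment $\tau_X\cap\ZZ^2\subseteq\bigcap M_{\gamma_0}$ directly.
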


\begin{proof}
We may assume that $X$ arises from 
Construction~\ref{prop:quaddiag}.
Consider the monoid
$\BPF(X) \subseteq \Cl(X)$ 
of divisor classes admitting a 
base point free representative.
Using~\cite{ArDeHaLa}*{Prop.~3.3.2.8},
we obtain
$$ 
\BPF(X) 
\ = \ 
\bigcap_{\gamma_0 \in \cov(X)} Q(\gamma_0 \cap \ZZ^{n+m})
\ \subseteq \
\Cl(X).
$$
By Proposition~\ref{prop:ample}, the 
cone in $\Cl_\QQ(X)$ generated by 
$\BPF(X)$ equals the cone $\SAmple(X)$
of semiample divisor classes.
As for any Mori dream space, 
$\SAmple(X)$ coincides with the cone 
of numerically effective divisor classes.

Our task is to show that $\BPF(X)$ is 
saturated in $\Cl(X)$ in the sense
that given $w \in \Cl(X)$ and 
$n \in \ZZ_{>0}$ with $nw \in \BPF(X)$,
one has $w \in \BPF(X)$.
Since the intersection of saturated 
submonoids is saturated, 
it suffices to show that every 
monoid
$Q(\gamma_0 \cap \ZZ^{n+m})$,
where $\gamma_0 \in \cov(X)$,
is saturated.
If $X$ is of Picard number one,
then Proposition~\ref{prop:picardnumber1}
tells us $\Cl(X) = \ZZ$ 
and $Q(\gamma_0 \cap \ZZ^{n+m}) = \ZZ_{\ge 0}$ 
for all faces $\gamma_0 \in \cov(X)$,
proving that $\BPF(X)$ is saturated.

Assume that $X$ is of Picard number two.
Then $\Cl(X) = \ZZ^2$ holds according to
Theorem~\ref{thm:smoothrhoX2}.
Moreover, for any two-dimensional face
$\gamma_{i,j} = \cone(e_i,e_j)$ of $\cov(X)$,
Proposition~\ref{prop:smooth}~(v) says 
that $Q(e_i)$ and $Q(e_j)$ form a $\ZZ$-basis 
for $\Cl(X)$.
We conclude that $Q(\gamma_{i,j} \cap \ZZ^{n+m})$
is saturated for all two-dimensional
faces $\gamma_{i,j} = \cone(e_i,e_j)$ of~$\cov(X)$.
Theorem~\ref{thm:smoothrhoX2} specifies 
the semiample cone for each of the Types~1 to~4.
Combining this with Remark~\ref{rem:Xfaces}
allows us to determine the set $\cov(X)$ 
of minimal $X$-relevant faces explicitly.
If $X$ is of Types~1, 2 or~4, then we 
see that in fact all $\gamma_0 \in \cov(X)$ 
are two-dimensional and thus $\BPF(X)$ is saturated.
We are left with discussing $X$ of Type~3.
If $n$ is even, then again all
$\gamma_0 \in \cov(X)$ are two-dimensional.
If $n$ is odd, then, besides the two-dimensional 
ones, we find one more face in $\cov(X)$, 
namely $\gamma_{1,2,n} = \cone(e_1,e_2,e_n)$.
The corresponding images under $Q$ 
are~$(0,1)$, $(2,1)$ and~$(1,1)$, 
generating the saturated monoid 
$\cone(Q(\gamma_{1,2,n})) \cap \ZZ^2$.
\end{proof}

\begin{corollary}
\label{cor:bpfquadricssatrhoX2}
Every smooth intrinsic quadric $X$ 
of Picard number at most two
fulfills Fujita's freeness
conjecture.
That means that 
$C_X + s D$ is base point free
for any canonical divisor $C_X$,
all~$s \geq \dim(X)+1$ and all ample
divisors $D$ on~$X$.
\end{corollary}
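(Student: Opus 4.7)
The strategy is to reduce Fujita's freeness to showing that $C_X + sD$ is numerically effective for $s \geq \dim(X)+1$ and every ample $D$, and then invoke Corollary~\ref{cor:bpfsat}, which already establishes that on our $X$ every nef divisor is base point free. Thus the entire problem collapses to a combinatorial inequality inside $\Cl_{\QQ}(X) = K_\QQ$.

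In Picard number one, Proposition~\ref{prop:picardnumber1} gives $\Cl(X) = \ZZ$ with semiample cone $\QQ_{\ge 0}$, and Proposition~\ref{prop:antican} computes $-C_X$ as the integer $n+m-2 = \dim(X)$. For any ample class $D$ of (positive) degree $a \ge 1$, the class of $C_X + sD$ has degree $sa - \dim(X) \ge (\dim(X)+1)\cdot 1 - \dim(X) = 1 \ge 0$, so it is nef and we are done.

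In Picard number two, Theorem~\ref{thm:smoothrhoX2} gives $\Cl(X) = \ZZ^2$, and in every Type the semiample cone $\tau_X$ is generated by two rays with primitive generators $D_1, D_2$. Since $X$ is smooth, Proposition~\ref{prop:smooth}(v) applied to the two-dimensional $X$-relevant faces delimiting $\tau_X$ implies that $D_1, D_2$ form a $\ZZ$-basis of $\Cl(X)$. Consequently, every ample integral class decomposes as $D = \lambda_1 D_1 + \lambda_2 D_2$ with $\lambda_1, \lambda_2 \in \ZZ_{\ge 1}$. Writing $-C_X = c_1 D_1 + c_2 D_2$, with the integers $c_i$ read off directly from the table in the proof of Corollary~\ref{cor:FanosAlmFanos}, the class of $C_X + sD$ equals $(s\lambda_1 - c_1)D_1 + (s\lambda_2 - c_2)D_2$, and lies in $\tau_X$ exactly when $s\lambda_i \ge c_i$ for $i=1,2$. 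Since $\lambda_i \ge 1$ and $s \ge \dim(X)+1 = n+m-2$, it is enough to check in each Type the uniform bound $\max(c_1,c_2) \le n+m-2$.

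This last step is routine bookkeeping, and I expect it to be the only real labor. For instance, in Type~1 with $D_1=(1,0)$, $D_2=(\alpha,1)$, one finds $c_2 = m$ and $c_1 = n-2 + a_1 + \ldots + a_m - m\alpha$; the inequality $c_2 \le n+m-2$ follows from $n \ge 5$, and $c_1 \le n+m-2$ from $\sum a_j \le m\alpha$. The remaining Types~2, 3, 4 are entirely analogous, each exploiting only $0 \le a_j \le \alpha$ and the lower bounds on $n$ and $m$ recorded in Theorem~\ref{thm:smoothrhoX2}. The genuinely nontrivial content of the corollary---namely, the saturation of the base-point-free monoid that turns nefness into base point freeness---has already been dispensed with in Corollary~\ref{cor:bpfsat}; no further deep input is required beyond the explicit Picard-number-two classification.
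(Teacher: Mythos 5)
Your proposal is correct, and its overall skeleton coincides with the paper's: both arguments reduce Fujita freeness to the statement that $C_X+sD$ is numerically effective and then invoke Corollary~\ref{cor:bpfsat}, which converts nefness into base point freeness via the saturation of the monoid $\BPF(X)$. The difference lies entirely in how nefness is obtained. The paper disposes of it in one line by citing Fujita's theorem (\cite{Fu}*{Thm.~1}) that $C_X+sD$ is nef for $s\ge\dim(X)+1$ and $D$ ample on \emph{any} smooth projective variety, whereas you re-derive this in the cases at hand by writing $-C_X$ and $D$ in the $\ZZ$-basis $D_1,D_2$ of the semiample cone and checking $\max(c_1,c_2)\le n+m-2$ type by type. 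Your computations are correct in all four types (I verified Types~2--4 as well: for Type~4 one uses $a\le\alpha$ and $\sum a_j\le m\alpha$ to get $c_1\le\tfrac{n-2}{2}$, and $c_2=\tfrac{n-2}{2}+m\le n+m-2$), and the observation that the two extremal rays of $\tau_X$ form a $\ZZ$-basis, which justifies $\lambda_i\ge 1$ for an integral ample class, is exactly what Proposition~\ref{prop:smooth} and the shape of $\tau_X$ in Theorem~\ref{thm:smoothrhoX2} provide. What your route buys is self-containedness: it avoids the general nefness theorem (whose proof rests on the cone theorem) and in exchange exhibits the explicit coordinates of $C_X+sD$ in the nef cone, at the cost of depending on the full Picard-number-two classification even for a statement whose nefness part is completely general. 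The paper's route is shorter and applies verbatim the moment one knows $\BPF(X)$ is saturated.
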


\begin{proof}
Fujita proved that
$C_X + s D$ is numerically effective 
under the above assumptions~\cite{Fu}*{Thm.~1}
on $s$ and $D$.
Thus, Corollary~\ref{cor:bpfsat} 
gives the assertion.
\end{proof}

Mukai's conjecture~\cite{Mukai}
predicts
$\rho(X)(q(X) - 1) \le \dim(X)$
with equality if and only if~$X$ is 
the~$\rho(X)$-fold product of the 
projective space~$\PP_{q(X)-1}$
for every smooth Fano variety $X$
of Picard number $\rho(X)$ and 
Fano index $q(X)$.
The conjecture is proven for 
toric $X$ and in general for
$\rho(X) \le 2$ as well as for 
$\dim(X) \le 5$;
see~\cites{Cas, bocadedr, ACO, Wis}.
Let us revisit the case $\rho(X) \le 2$.

\begin{example}
\label{cor:Mukai}
We show how to obtain Mukai's conjecture 
for smooth Fano intrinsic quadrics~$X$ 
of Picard number $\rho(X) \le 2$
from our results.
In the case $\rho(X) = 1$, 
Proposition~\ref{prop:picardnumber1}
tells us that $X$ is a smooth quadric
in a projective space and thus satisfies 
Mukai's conjecture.
So, assume $\rho(X) = 2$.
We may assume that $X$ arises from
Construction~\ref{prop:quaddiag}
with input data given by 
Theorem~\ref{thm:smoothrhoX2}.
Note that we have 
$$
\dim(X) \ = \ n+m-3.
$$
Corollary~\ref{cor:FanosAlmFanos}
provides us with the Fano condition.
Moreover, the anticanonical class 
$-\mathcal{K}_X \in \Cl(X) = \ZZ^2$
is specified in the table 
shown in the proof of 
Corollary~\ref{cor:FanosAlmFanos}
and the Fano index $q(X)$ 
equals the greatest common divisor 
of the two entries of the vector
$-\mathcal{K}_X$.
We now go through the four different 
Types of Theorem~\ref{thm:smoothrhoX2}.

Let $X$ be of Type~1.
If~$\alpha=0$ holds, then we have 
$-\mathcal{K}_X=(n-2,m)$ 
and thus~$q(X) = \gcd(n-2,m)$. 
We conclude
$$
2(q(X) -1) 
\ \le \
2\min(n-2,m) -2 
\ \le \
(n-2+m)-2
\ < \ 
\dim(X).
$$ 
Now let~$\alpha>0$. 
With $k \sei n-2 + a_1 + \ldots + a_m$,
we have~$-\mathcal{K}_X=(k,m)$ 
and~$q(X)$ divides both entries.
This implies $q(X) \le m$.
If $q(X)<m$ holds, then, because of 
$n \ge 5$, we obtain
$$
2(q(X) -1) 
\ \le \
2\left(\frac{m}{2} - 1 \right) 
\ = \ 
m-2 
\ \le \ 
n+m -7
\ < \
\dim(X).
$$
If we have $q(X)=m$, then~$m$ divides $k$.
Thus, the Fano condition $\alpha m < k$ 
implies $(\alpha+1)m \le k$.
Moreover, $\alpha > 0$ implies 
$a_1 + \ldots + a_m < \alpha m$.
Together, we obtain
$$ 
2(q(X)-1)
\ = \ 
2m-2
\ \le \ 
k - (\alpha + 1-2)m-2
\ < \ 
n-2 + m -2 
\ < \ 
\dim(X).
$$

Let $X$ be of Type~2.
Then we have 
$-\mathcal{K}_X =( (n/2-1) \alpha + m, n-2 )$
and $q(X)$ divides both entries. 
If $q(X)<n-2$ holds, 
then using $m \ge 2$, we obtain 
$$
2(q(X)-1) 
\ \le \
2 \left( \frac{n-2}{2} -1 \right)
\ = \
n-4
\ \le \
n+m-6 
\ < \ 
\dim(X).
$$
We are left with discussing the case $q(X) = n-2$.
If~$\alpha=0$ holds, then we 
obtain $-\mathcal{K}_X=(m, n-2)$ and
thus $n-2 \le m$. We conclude
$$
2(q(X)-1) 
\ = \
2((n-2) -1)
\ = \
(n-2) + (n-4)
\ \le \
n +m -4
\ < \
\dim(X).
$$
Next, let $\alpha = 1$.
Then $-\mathcal{K}_X=((n-2)/2 +m, n-2)$ holds
and $q(X) = n-2$ divides the first entry.
Thus, with a suitable $k\in\ZZ$, we have 
$$
m \ = \ \frac{2k+1}{2}(n-2).
$$
In the case $\alpha = 1$, the Fano condition 
reads as $m > (n-2)/2$ and thus~$k \ge 1$ 
holds.
Moreover, because of $n > 4$, we have 
$n/2 -1 < n-3$ and thus obtain
$$
2(q(X)-1)
\ = \
2(n-2-1)
\ < \
\frac{3}{2}(n-2) + \frac{n-2}{2}
\ \le \
m+\frac{n}{2}-1
\ < \ 
\dim(X).
$$
Finally, let $\alpha \ge 2$. 
Then the Fano condition 
says $n-2 < 2m/\alpha$.
Consequently, we obtain
$$
2(q(X)-1) 
\ = \
(n-2) + (n-4)
\ < \
\frac{2m}{\alpha} + (n-4)
\ \le \
n + m - 4 
\ < \ 
\dim(X).
$$

Let $X$ be of Type~3.
Then~$-\mathcal{K}_X=(n-2+m, n-2)$ holds
and the Fano condition yields~$m < n-2$.
As $q(X)$ divides both entries of $-\mathcal{K}_X$,
we see $q(X) \ne n-2$ and thus
$q(X) \le (n-2)/2$.
We conclude
$$
2(q(X)-1) 
\ \le \
2 \left( \frac{n-2}{2} -1 \right)
\ = \
n-4
\ < \
\dim(X).
$$

Let $X$ be of Type~4.
Then
$-\mathcal{K}_X=((n/2-1) (\alpha+1) + a_1 + \ldots + a_m, n/2-1+m)$
holds.
In the case $\alpha = 0$, all the $a_i$ vanish 
as well, we obtain $q(X)\le n/2-1$ and thus
$$
2(q(X)-1) 
\  \le \
n-4
\ \le \
n+m-4
\ < \
 \dim(X).
$$
Let~$\alpha>0$.
If~$q(X) < n/2-1+m$ holds,
then even $q(X) \le (n/2-1+m)/2$
must hold and, because
of $n < 0$, we obtain 
$$
2(q(X)-1) 
\  \le \
\frac{n}{2} - 1 + m - 2
\ < \
 \dim(X).
$$
We discuss the case $q(X)=n/2-1+m$.
The first component of $-\mathcal{K}_X$
equals $\beta q(X)$ with some positive 
integer $\beta$.
Plugging the Fano condition 
$$
\alpha m - (n/2 -1) \ <  \ a_1 + \ldots + a_m
$$
into this equality leads to the 
estimate $\alpha + 1 \le \beta$. 
Comparing $(\alpha+1)q(X)$ with 
the first component $\beta q(X)$ 
of $-\mathcal{K}_X$ gives
$$
(\alpha+1)q(X) 
\ \le \ 
\left(\frac{n}{2}-1\right)(\alpha+1) + a_1 + \ldots + a_m 
\  <  \ 
\left(\frac{n}{2}-1\right)(\alpha+1) + \alpha m,
$$
where the last inequality is due to the 
fact that $\alpha>0$ forces vanishing 
of at least one of the~$a_j$.
This allows us to conclude the discussion by
\begin{eqnarray*}
2(q(X)-1)
& = &
\big( (\alpha+1) -(\alpha+1-2) \big) q(X) -2
\\
& <  & 
\left(\left(\frac{n}{2}-1\right)(\alpha+1)+\alpha m\right)
-(\alpha-1)  q(X)
-2\\
& = &
n+m-4\\
& <& 
\dim(X).
\end{eqnarray*}
\end{example}

\section{Proof of Theorem~\ref{thm:fullFanos}}
\label{sec:fullFanos}

A first step is the general bound for 
the Picard number of (possibly singular) 
Fano full intrinsic quadrics
provided in Proposition~\ref{prop:fullFano}.
Then we prepare the 
proof of Theorem~\ref{thm:fullFanos},
which is given at the end of the section.
We will mostly work in the setting 
of standard intrinsic quadrics
$X = X(q,t,m,u)$ arising from 
Construction~\ref{constr:intquad}.
We write $g = g_{q,t}$ for the 
relation and the degrees of the 
variables in $\Cl(X) = K$ will be 
denoted as
$$ 
w_i = \deg(T_i) = Q(e_i)
\text{ for }
i = 1, \ldots, n,
$$
$$
w_{n+j} = \deg(S_j) = Q(e_{n+j})
\text{ for }
j = 1, \ldots, m.
$$

\begin{proposition}
\label{prop:fullFano}
Let $X$ be a Fano full standard intrinsic quadric 
arising from Construction~\ref{constr:intquad}.
\begin{enumerate}
\item
If $t > 1$ holds, i.e., $g$ has at least two
squares, then we have $\rho(X) = 1$.
\item
If $t = 1$ holds, i.e., $g$ has squares,
then we have $\rho(X) \le 2$.
\item
If $t = 0$ holds, i.e., $g$ has no squares,
then we have $\rho(X) \le 3$.
\item
If $\rho(X) = 3$ holds, then we have $t=0$ and 
$X$ is $\QQ$-factorial.
\end{enumerate}
\end{proposition}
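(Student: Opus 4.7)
The plan is to reduce everything to Proposition~\ref{prop:antican}, which in the full case ($m = 0$) gives $-\mathcal{K}_X = \frac{n-2}{2}\mu$ in $\Cl_\QQ(X)$. The Fano hypothesis forces $\mu$ itself to be ample; hence $\Ample(X)$ is nonempty and open in $\Cl_\QQ(X)$, so $\SAmple(X)$ is full-dimensional and by Proposition~\ref{prop:Qfact} the variety~$X$ is $\QQ$-factorial. This already settles the $\QQ$-factoriality assertion in~(iv) and will be used repeatedly below.

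For part~(i), suppose $t > 1$. By the supplement in Proposition~\ref{prop:quaddiag} there are two distinct elements $u_1, u_2 \in K$ with $2u_1 = 2u_2 = \mu$, so $u_1 - u_2$ is a nontrivial 2-torsion element of $K = \Cl(X)$. For $n = q + t \ge 5$, Example~\ref{ex:quadkfact}(i) makes $\mathcal{R}(X)$ a UFD, forcing $K$ to be torsion-free by \cite{ArDeHaLa}*{Prop.~1.4.1.5}, a contradiction. Hence $n \le 4$; in the full case $m = 0$ the requirement $r < s$ in Example~\ref{ex:quadkfact}(ii) eliminates the configurations $(q,t) = (0,3)$ and $(0,4)$ (both with $r = s$) and leaves only $(q,t) = (2,2)$, where $s = 4$ and $r = 3$ give $\rho(X) = 1$.

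For parts~(ii) and~(iii), the key is to exhibit a small $\overline{X}$-face whose $\QQ$-span is automatically of low dimension and which contains $\mu$ in the relative interior of its $Q$-image. In part~(ii), with $t = 1$, take the type~(ii) face $\gamma_{1,2,q+1}$ of Remark~\ref{rem:Xfaces}. Writing $w_1 = \mu/2 + y$, $w_2 = \mu/2 - y$ in $\Cl_\QQ(X)$ (possible since $w_1 + w_2 = \mu$) and $w_{q+1} = \mu/2$ in $\Cl_\QQ(X)$ (since $2w_{q+1} = \mu$), one sees $Q(\gamma_{1,2,q+1}) \subseteq \lin_\QQ(\mu/2, y)$, of dimension at most~$2$, and the identity $\mu = \tfrac12(w_1+w_2) + w_{q+1}$ with positive coefficients shows $\mu$ is in the relative interior of $Q(\gamma_{1,2,q+1})$, making it $X$-relevant. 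In part~(iii), with $t = 0$, replace $\gamma_{1,2,q+1}$ by the type~(i) face $\gamma_{1,2,3,4}$; the analogous decomposition $\mu = \tfrac12(w_1+w_2) + \tfrac12(w_3+w_4)$ together with $w_{2k-1} = \mu/2 + y_k$, $w_{2k} = \mu/2 - y_k$ yields
\[
Q(\gamma_{1,2,3,4}) \ \subseteq \ \lin_\QQ(\mu/2, y_1, y_2),
\]
of dimension at most~$3$. In either case the auxiliary face is $X$-relevant and hence contains some $\gamma_0 \in \cov(X)$; $\QQ$-factoriality then forces $Q(\gamma_0)$ to be full-dimensional in $\Cl_\QQ(X)$, so $\rho(X) \le 2$ in case~(ii) and $\rho(X) \le 3$ in case~(iii). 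Part~(iv) follows by combining~(i) and~(ii) to force $t = 0$ when $\rho(X) = 3$.

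The main obstacle I expect is to handle potential edge cases where the relevant auxiliary face has an even smaller $X$-relevant subface not contained in the naive reduction to $\gamma_{1,2,q+1}$ or $\gamma_{1,2,3,4}$ --- for instance when some $w_i$ is proportional to~$\mu$, making a one-dimensional face $\gamma_i$ itself $Z$-relevant. These cases actually strengthen the conclusion, since the enclosing $Q$-image drops in dimension further, but making this precise requires a careful case distinction on which subfaces of the auxiliary face are $X$-relevant.
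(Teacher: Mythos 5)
Your proof has two genuine gaps, and both are load-bearing. The opening claim --- that Fano-ness makes $\Ample(X)$ ``nonempty and open in $\Cl_\QQ(X)$'' and hence $X$ is $\QQ$-factorial --- is false. By Proposition~\ref{prop:ample}, $\Ample(X)=\bigcap_{\gamma_0\in\cov(X)}Q(\gamma_0)^\circ$ is an intersection of \emph{relative} interiors; it contains $u$ and is therefore nonempty for every standard intrinsic quadric, Fano or not, but it is open in $\Cl_\QQ(X)$ only if every $Q(\gamma_0)$ is full-dimensional, which is exactly the content of $\QQ$-factoriality (Proposition~\ref{prop:Qfact}) and does not come for free. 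This undercuts your derivations of (ii) and (iii) --- though these are easily repaired: your auxiliary faces $\gamma_{1,2,q+1}$ and $\gamma_{1,2,3,4}$ and the dimension counts are exactly right, and Proposition~\ref{prop:picard} gives $\Pic(X)\subseteq Q(\lin_\ZZ(\gamma_0))$ for any $\gamma_0\in\cov(X)$ inside them, so $\rho(X)=\rk\Pic(X)$ is bounded by $2$ resp.\ $3$ with no factoriality needed. It leaves the $\QQ$-factoriality assertion of (iv) unproved, and there no quick fix is available: the paper's argument uses $\rho(X)=3$ to force each $X$-relevant face $\gamma_{i,i+1,j,j+1}$ to have a $Q$-image of dimension exactly three containing $\Pic_\QQ(X)$, so that all these images coincide with one subspace $V$, whose sum over all such faces is $K_\QQ$; only then does $\dim\Cl_\QQ(X)=3$ follow.

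Part (i) is also broken. You use the implication ``Cox ring a UFD $\Rightarrow$ $\Cl(X)$ torsion free,'' which is the converse of the cited fact and is false: for $q+t\ge 5$ the ring $\KK[T_1,\ldots,T_n]/\langle g_{q,t}\rangle$ is a UFD independently of the grading, and a configuration such as $g=T_1T_2+T_3T_4+T_5^2+T_6^2$ with $\deg(T_5)\ne\deg(T_6)$ but $2\deg(T_5)=2\deg(T_6)$ is perfectly admissible and has $2$-torsion in $K$. So the contradiction you derive for $n\ge 5$ does not exist --- and in any case statement~(i) asserts $\rho(X)=1$, not nonexistence. The correct argument is of the same shape as your (ii) and (iii): for $t\ge 2$ the face $\gamma_{q+1,q+2}$ is $X$-relevant, its $Q$-image spans only the line $\QQ\mu$ in $K_\QQ$ because $2w_{q+1}=2w_{q+2}=\mu$, and Proposition~\ref{prop:picard} then gives $\rk\Pic(X)\le 1$.
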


\begin{proof}
We have $X = X(q,t,m,u)$ with $m=0$ and,
according to Proposition~\ref{prop:ample}, 
we may assume that $u$ is the anticanonical 
class.  
Propositions~\ref{prop:antican}
tells us that in~$K_\QQ$, we have
$$ 
u 
\ = \ 
\frac{q+t-2}{2} \deg(g).
$$
A face $\gamma_0 \preceq \gamma$ is $X$-relevant 
if and only if it satisfies the conditions 
of Remark~\ref{rem:Xfaces} and one has 
$u \in \relint(Q(\gamma_0))$. 
For Assertions~(i), (ii) and~(iii), we consider 
the following $X$-relevant faces of $\gamma$: 
$$ 
\text{(i) }
\gamma' = \gamma_{q+1,q+2},
\qquad 
\text{(ii) }
\gamma' = \gamma_{1,2,q+1},
\qquad
\text{(iii) }
\gamma' = \gamma_{1,2,3,4}.
$$
We have $\dim(Q(\gamma')) \le 1,2,3$ according to 
the cases, because homogeneity of $g$ yields 
the following linear relations in the 
respective images~$Q(\lin_\QQ(\gamma_0))$:
$$ 
2w_{q+1} \ = \  2w_{q+2},
\qquad
w_1+w_2 \ = \ w_{q+1},
\qquad
w_1+we_2  \ = \ we_3+w_4.  
$$
The first three assertions thus follow from
the description of the Picard group provided by 
Proposition~\ref{prop:picard}: in each of the three 
cases, we have
$$ 
\Pic(X) 
\ = \ 
\bigcap_{\gamma_0 \in \cov(X)} Q(\lin_\ZZ(\gamma_0))
\ \subseteq \ 
Q(\gamma').
$$

In order to prove~(iv), assume $\rho(X) = 3$.
Assertions~(i) and~(ii) yield $t=0$. 
To obtain $\QQ$-factoriality of $X$
we have to show that $K_\QQ = \Cl(X)_\QQ$ 
is of dimension three.
The assumption $\rho(X) = 3$ together with 
Proposition~\ref{prop:picard} yields 
$\dim(Q(\gamma_0)) \ge 3$ for all 
$X$-relevant faces $\gamma_0 \preceq \gamma$.
Consider the faces 
$$ 
\gamma(i,j) 
\ \sei \ 
\gamma_{i,i+1,j,j+1}
\ \preceq \ 
\gamma,
$$
where $i,j$ are odd with $1 \le i < j \le q-1$.
These are all $X$-relevant and $Q(\gamma(i,j))$ 
is of dimension three.
Using again $\rho(X) = 3$ and  
Proposition~\ref{prop:picard},
we conclude that the $Q(\gamma(i,j))$ generate 
all the same 3-dimensional vector subspace 
$V \subseteq K_\QQ$. 
Thus $\dim(K_\QQ) = 3$ follows from
$$ 
K_\QQ
\ = \ 
Q(\QQ^{n+m})
\ = \ 
Q(\lin_\QQ(\gamma(1,3)) + \ldots + \lin_\QQ(\gamma(q-3,q)))
\ = \ 
V.
$$
\end{proof}

\goodbreak

\begin{corollary}
Let $X$ be a Fano full intrinsic quadric.
Then $\rho(X) \le 3$ holds and if we have 
$\rho(X) = 3$, then $X$ is $\QQ$-factorial.
\end{corollary}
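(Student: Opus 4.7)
The plan is to reduce immediately to the situation handled by Proposition~\ref{prop:fullFano} and then read off both assertions from its four parts. By Proposition~\ref{prop:iq2siq}, every intrinsic quadric is isomorphic to a standard intrinsic quadric, so I may assume that $X = X(q,t,m,u)$ arises from Construction~\ref{constr:intquad}. The assumption that $X$ is \emph{full} means precisely that $m = 0$, so that the defining relation $g = g_{q,t}$ uses all Cox ring generators and the hypotheses of Proposition~\ref{prop:fullFano} are satisfied.

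Now I would split the argument by the value of~$t$, the number of pure squares appearing in the normal form $g_{q,t}$. If $t \ge 2$, then Proposition~\ref{prop:fullFano}(i) yields $\rho(X) = 1$; if $t = 1$, then part~(ii) yields $\rho(X) \le 2$; and if $t = 0$, then part~(iii) yields $\rho(X) \le 3$. In all three cases we have $\rho(X) \le 3$, which is the first claim of the corollary. For the second claim, assume $\rho(X) = 3$. Then the preceding case distinction forces $t = 0$, and Proposition~\ref{prop:fullFano}(iv) concludes that $X$ is $\QQ$-factorial.

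There is essentially no obstacle here, since the substance of the statement has already been handled inside Proposition~\ref{prop:fullFano}; the only small point to check is that the reduction to a standard intrinsic quadric is compatible with the notions of ``full'', Picard number, and $\QQ$-factoriality, which all follow from the fact that an isomorphism of intrinsic quadrics preserves the divisor class group, the Cox ring up to graded isomorphism, and hence both the number of free variables $S_j$ (namely zero in the full case) and the local factoriality properties encoded in Proposition~\ref{prop:Qfact}.
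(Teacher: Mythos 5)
Your proposal is correct and follows exactly the route the paper intends: the corollary is stated as an immediate consequence of Proposition~\ref{prop:fullFano}, and the only content to supply is the reduction to a standard intrinsic quadric via Proposition~\ref{prop:iq2siq}, which you do. The case split on $t$ and the appeal to parts (i)--(iv) match the paper's argument.
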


We start our preparations of the  
proof of Theorem~\ref{thm:fullFanos}.
When performing a renumeration of variables,
we always keep $g = g_{q,t}$ a standard $K$-homogeneous 
quadratic polynomial, that means that our
renumberings respect monomials and take place only 
inside the $q$-, $t$- and $m$-blocks. 
Moreover, when visualizing the situation, 
we draw (parts of) the intersection of 
$\Eff(X) = Q(\gamma)$ with an affine hyperplane 
passing orthogonally through an inner vector
of $\Eff(X)$ and we will indicate the 
ray through, for instance, $w_i$ by a dot 
with label $w_i$.

\begin{lemma}
\label{lem:123renumb135}
Let $X = X(q,t,m,u)$ be a $\QQ$-factorial 
standard intrinsic quadric of Picard number 
three with $q \ge 4$.
If there is an $\ell$ with $5 \le \ell \le q$ 
or $n+1 \le \ell \le n+m$ such that
$\gamma_{1,2,\ell} \preceq \gamma$ is $Z$-relevant,
then there are $1 \le i \le 2$ and $3 \le j \le 4$ 
such that $\gamma_{i,j,\ell} \preceq \gamma$ 
is $X$-relevant.
\end{lemma}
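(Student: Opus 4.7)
The plan is to fix the basis $(w_1,w_2,w_\ell)$ of $K_\QQ$, use the
homogeneity relation $w_1+w_2 = w_3+w_4$ coming from the monomial
$T_3T_4$ in $g_{q,t}$ to parameterise the positions of $w_3$ and
$w_4$, and reduce the assertion to a two-dimensional picture via the
projection $\pi\colon K_\QQ\to K_\QQ/\RR w_\ell$. The hypothesis
$u\in\relint \cone(w_1,w_2,w_\ell)$ together with $\QQ$-factoriality of
$X$, which by Proposition~\ref{prop:Qfact} forces $\SAmple(X)$ and
hence the chamber containing $u$ to be three-dimensional, ensures that
$w_1,w_2,w_\ell$ are $\QQ$-linearly independent. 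Using them as a basis,
write
\[
u \;=\; aw_1+bw_2+cw_\ell,\qquad
w_3 \;=\; xw_1+yw_2+zw_\ell,\qquad
w_4 \;=\; (1-x)w_1+(1-y)w_2-zw_\ell,
\]
with $a,b,c>0$. Since swapping the labels of $T_3$ and $T_4$ leaves
$g_{q,t}$ invariant, we may also assume $z\ge 0$.

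The key geometric input is the pointedness of $\Eff(X)=Q(\gamma)$ from
Proposition~\ref{prop:ample}, which furnishes a linear functional
$\phi\in (K_\QQ)^\ast$ whose coordinates $(p_1,p_2,p_3)$ in the dual
basis of $(w_1,w_2,w_\ell)$ are strictly positive and which takes
strictly positive values on every generator of $\Eff(X)$. Evaluated on
$w_4$, this yields $p_1(1-x)+p_2(1-y) > p_3z \ge 0$, which prevents
$1-x$ and $1-y$ from both being non-positive and, inspecting the
boundary cases similarly, rules out every position of
$\pi(w_4)=(1-x,1-y)$ on the strictly negative parts of the coordinate
axes in the basis $(\pi(w_1),\pi(w_2))$ of $K_\QQ/\RR w_\ell$. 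Thus
$\pi(w_4)$ lies either in the open first, second, or fourth quadrant
or on the strictly positive part of an axis. A direct case-by-case
inspection in each of these positions confirms that the union of the
open cones $\cone(\pi(w_1),\pi(w_4))^\circ$ and
$\cone(\pi(w_2),\pi(w_4))^\circ$ covers
$\cone(\pi(w_1),\pi(w_2))^\circ$ with the possible exception of the
single ray through $\pi(w_4)$.

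By Proposition~\ref{prop:ample} the presentation $X=X(q,t,m,u)$ is
invariant under replacing $u$ by another element of $\Ample(X)^\circ$,
so perturbing $u$ slightly inside its ample chamber preserves the
$Z$-relevance of $\gamma_{1,2,\ell}$ and lets us assume $\pi(u)$ avoids
this single ray. Consequently $\pi(u)\in\cone(\pi(w_i),\pi(w_4))^\circ$
for some $i\in\{1,2\}$, giving a unique expansion
$\pi(u)=\alpha\pi(w_i)+\beta\pi(w_4)$ with $\alpha,\beta>0$. Lifting
this identity back to $K_\QQ$ yields
$u=\alpha w_i+\beta w_4+\delta w_\ell$ where the third coordinate
forces $\delta = c+\beta z \ge c > 0$; hence $u$ lies in
$\relint \cone(w_i,w_4,w_\ell) = \relint Q(\gamma_{i,4,\ell})$. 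It
remains to observe that $\gamma_{i,4,\ell}$ is an $\overline{X}$-face
by Remark~\ref{rem:Xfaces}: with $i\in\{1,2\}$ from the first pair,
$4\in\{3,4\}$ from the second, and $\ell$ either in $\{5,\dots,q\}$
(hence in some pair $\{2k-1,2k\}$) or in $\{n+1,\dots,n+m\}$, the
triple $\{i,4,\ell\}$ is contained in a face of type~(iv) built by
choosing one index from each remaining pair together with all the
$S$-indices. The symmetric case $z\le 0$ runs identically with the
roles of $w_3$ and $w_4$ interchanged. The main obstacle is the
two-dimensional covering argument above: once pointedness of $\Eff(X)$
has been used to exclude the problematic positions of $\pi(w_4)$, the
sign choice $z\ge 0$ makes the third-coordinate positivity $\delta>0$
automatic and the lift to $K_\QQ$ becomes immediate.
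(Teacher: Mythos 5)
Your argument is, at its core, the same as the paper's: use the homogeneity relation $w_3+w_4=w_1+w_2$ together with pointedness of $\Eff(X)$ to show that $\cone(w_1,w_2,w_\ell)$ is covered by $\cone(w_1,w_j,w_\ell)\cup\cone(w_2,w_j,w_\ell)$ for a suitable $j\in\{3,4\}$, and then promote membership of $u$ to $X$-relevance of one of these faces (which are $\overline{X}$-faces by Remark~\ref{rem:Xfaces}). The two differences in execution are harmless. You project modulo $w_\ell$ and normalize the sign of the $w_\ell$-coordinate of $w_3$, whereas the paper projects modulo $\lin(w_1,w_2)$ via a linear form $l_{12}$; these are equivalent. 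And where you dispose of the exceptional ray through $\pi(w_4)$ by perturbing $u$ inside $\Ample(X)$ (acceptable, since by Proposition~\ref{prop:ample} all $u'\in\Ample(X)$ give the same variety and hence the same $X$-relevant faces), the paper avoids the perturbation altogether: it keeps the closed cones, notes that $u$ then lies in the relative interior of some face $Q(\gamma_0)$ of $Q(\gamma_{i,j,\ell})$ with $\gamma_0$ an $\overline{X}$-face, and lets Proposition~\ref{prop:Qfact} force $\gamma_0$ to be the full three-dimensional face. The paper's variant is slightly cleaner because it never has to worry about whether the hypotheses survive a change of $u$.

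The one step that does not hold up as written is your opening claim that $w_1,w_2,w_\ell$ are linearly independent. Your justification is that $u\in\relint Q(\gamma_{1,2,\ell})$ and $\SAmple(X)$ is three-dimensional; but a point of an open set can perfectly well lie in the relative interior of a two-dimensional cone, and Proposition~\ref{prop:Qfact} only forces $Q(\gamma_0)$ to be full-dimensional for \emph{$X$-relevant} faces $\gamma_0$ --- and $\gamma_{1,2,\ell}$ is not an $\overline{X}$-face (it meets both indices of the first monomial), so it is never $X$-relevant and the proposition says nothing about it directly. This is not cosmetic: your coordinate system, and above all the perturbation step (which needs $Q(\gamma_{1,2,\ell})^\circ$ to be \emph{open} so that $Z$-relevance of $\gamma_{1,2,\ell}$ survives moving $u$), both depend on this independence. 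The claim is in fact true, but it needs an argument: if $\dim\cone(w_1,w_2,w_\ell)=2$, then $u$ lies in the relative interior of a subcone spanned by at most two of $w_1,w_2,w_\ell$; every such subcone except $\cone(w_1,w_2)$ is the image of an $\overline{X}$-face ($\gamma_{1,\ell}$, $\gamma_{2,\ell}$, $\gamma_1$, $\gamma_2$ or $\gamma_\ell$), which would then be $X$-relevant with $\dim Q(\gamma_0)\le 2$, contradicting Proposition~\ref{prop:Qfact}; and the remaining possibility $u\in\relint\cone(w_1,w_2)$ is excluded because, once $w_\ell\in\lin(w_1,w_2)$, that relative interior is itself covered by the relative interiors of $\cone(w_1,w_\ell)$, $\cone(w_2,w_\ell)$ and the rays, returning us to the previous contradiction. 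With this supplement (or by switching to the paper's closed-cone argument, which sidesteps the issue), your proof is complete.
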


\begin{proof}
Let $l_{12}$ be a linear form on $K_\QQ = \Cl_\QQ(X)$
with $l_{12}(w_1) = l_{12}(w_2) = 0$ 
and $l_{12}(w_\ell) \ge 0$.
Then $l_{12} (\deg(g)) =0 $ holds
and we may assume $l_{12}(w_3) \le 0$.
\begin{center}
\begin{tikzpicture}[scale=1.5]

   
    \coordinate (w1) at (1,0);
    \node[right] at (w1) {\small{$w_1$}};
    \node[draw,circle,inner sep=.8pt,fill=black] at (w1) {}; 
    
    \coordinate (w3) at (0.5,1);
    \node[right] at (w3) {\small{$w_3$}};
    \node[draw,circle,inner sep=.8pt,fill=black] at (w3) {}; 
    
    \coordinate (dg) at (0.5,0.5);
    \node[right] at (dg) {\small{$\deg(g)$}};
    \node[draw,circle,inner sep=.8pt,fill=black] at (dg) {};

    \coordinate (w2) at (0,1);
    \node[left] at (w2) {\small{$w_2$}};
    \node[draw,circle,inner sep=.8pt,fill=black] at (w2) {};     
 
    \coordinate (w4) at (0.5,0);
    \node[left] at (w4) {\small{$w_4$}};
    \node[draw,circle,inner sep=.8pt,fill=black] at (w4) {}; 
       
    \coordinate (w6) at (-.4,.3);
    \node[below] at (w6) {\small{$w_\ell$}};
    \node[draw,circle,inner sep=.8pt,fill=black] at (w6) {};     
    
    
    \draw[black!70!, dashed] (1.5,-.5) -- (-.45,1.45);
    \fill[black!90!]  (-.5,1.5) node [left]  {\small{$l_{12}^\perp$}};
   
    \draw[thin] (w2) -- (w1);
     \draw[thin] (w4) -- (w3);
             
\end{tikzpicture}  
\end{center}
Moreover, $Q(\gamma_{1,2,\ell})$
is contained in 
$Q(\gamma_{1,3,\ell}) \cup Q(\gamma_{2,3,\ell})$.
Thus, $u$ lies in the relative interior 
of a face $\tau$ of one of the latter 
two cones.
We have  $\tau = Q(\gamma_0)$ 
with a face~$\gamma_0$ 
of $\gamma_{1,3,\ell}$ or 
$\gamma_{2,3,\ell}$.
Since $\gamma_0$ is an 
$\overline{X}$-face,
Proposition~\ref{prop:Qfact}
yields that $Q(\gamma_0)$ must 
be of dimension three.
Thus,
$\gamma_0$ equals $\gamma_{1,3,\ell}$ or 
$\gamma_{2,3,\ell}$.
\end{proof}

\begin{lemma}
\label{lem:12renumb135}
Let $X = X(q,t,m,u)$ be a $\QQ$-factorial
standard intrinsic quadric of Picard
number three with $q \ge 6$.
If $\gamma_{1,2} \preceq \gamma$ is $Z$-relevant,
then $\gamma_{i,3,j}$ is $X$-relevant 
for some $1 \le i \le 2$ and $5 \le j \le 6$.
\end{lemma}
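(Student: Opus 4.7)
The plan is to adapt the argument of Lemma~\ref{lem:123renumb135} by using a linear form that vanishes on the linear span $\Pi$ of $w_1$ and $w_2$. Since $X$ is $\QQ$-factorial of Picard number three, $K_\QQ$ is three-dimensional; assuming $w_1, w_2$ linearly independent, there is an essentially unique linear form $l_{12}$ on $K_\QQ$ with kernel $\Pi$. Homogeneity of $g$ forces $l_{12}(\deg(g)) = 0$, whence $l_{12}(w_3) = -l_{12}(w_4)$ and $l_{12}(w_5) = -l_{12}(w_6)$. After swapping the two variables in a pair if necessary, one may assume $l_{12}(w_3) \ge 0$ and pick $j \in \{5, 6\}$ with $l_{12}(w_j) \le 0$, so that $w_3$ and $w_j$ lie on opposite sides of~$\Pi$.

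The next step, assuming $l_{12}(w_3)>0$ and $l_{12}(w_j)<0$ strictly, is to identify the ray
\[
\cone(w_3, w_j) \cap \Pi \;=\; \QQ_{\ge 0}\, v,
\qquad
v \,:=\, -l_{12}(w_j)\, w_3 + l_{12}(w_3)\, w_j \;\in\; \Pi.
\]
Writing $v = \mu w_1 + \nu w_2$, pointedness of $\Eff(X)$ (which contains $v$) forbids $\mu$ and $\nu$ from both being strictly negative; a short case analysis on the signs of $\mu$ and $\nu$ then yields
\[
\cone(w_1, w_2) \;\subseteq\; \cone(w_1, v) \cup \cone(w_2, v),
\]
with the two subcones meeting precisely along $\QQ_{\ge 0}\, v$. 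The key observation is that $\cone(w_i, w_3, w_j) \cap \Pi = \cone(w_i, v)$ for $i \in \{1,2\}$, and that a point of $\Pi$ lies in the planar interior of $\cone(w_i, v)$ if and only if it lies in the $3$-dimensional interior of $\cone(w_i, w_3, w_j)$. Thus $u \in \relint(\cone(w_1, w_2))$ sits in the interior of $\cone(w_i, v)$ for $i=1$ or $i=2$, unless it lies on the common boundary ray $\QQ_{\ge 0}\, v$.

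Finally, the alternative $u \in \QQ_{\ge 0}\, v$ is to be ruled out: it would place $u$ in the relative interior of the two-dimensional cone $\cone(w_3, w_j)$, and since $q \ge 6$ forces the indices $3$ and $j \in \{5,6\}$ into different pairs of the type-(iv) pattern, $\gamma_{3,j}$ is contained in a face of type~(iv) and is therefore an $\overline{X}$-face. Hence $\gamma_{3,j}$ would be $X$-relevant with only two-dimensional $Q(\gamma_{3,j})$, contradicting $\QQ$-factoriality via Proposition~\ref{prop:Qfact}. The main obstacle I expect is the treatment of degenerate configurations not covered above --- namely $w_1, w_2$ being linearly dependent, or $l_{12}$ vanishing on all of $w_3,\ldots,w_6$ simultaneously --- for which the ray $v$ is ill-defined; these should be excluded using $\QQ$-factoriality and the Picard-three hypothesis, which together force at least one weight among $w_3,\ldots,w_6$ to leave $\Pi$ after a suitable relabeling.
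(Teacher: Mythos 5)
Your main line of argument is essentially the paper's own proof: the paper likewise takes a linear form $l_{12}$ vanishing on $w_1,w_2$, uses $l_{12}(\deg(g))=0$ to place $w_3$ and one of $w_5,w_6$ on opposite sides of its kernel, deduces the covering $Q(\gamma_{1,2})\subseteq Q(\gamma_{1,3,j})\cup Q(\gamma_{2,3,j})$, and then invokes $\QQ$-factoriality (Proposition~\ref{prop:Qfact}) to force the face of one of these cones whose relative interior contains $u$ to be three-dimensional, hence the whole cone. Your explicit construction of the ray $\QQ_{\ge 0}v$, the pointedness argument ruling out $\mu,\nu<0$, and the exclusion of $u\in\QQ_{\ge 0}v$ via the $\overline{X}$-face $\gamma_{3,j}$ are all correct and simply spell out what the paper leaves implicit.

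There is, however, one genuine hole in your case accounting. Besides $w_1,w_2$ being dependent and all of $w_3,\dots,w_6$ lying in $\Pi$, there is the intermediate configuration $l_{12}(w_3)=l_{12}(w_4)=0$ while $l_{12}(w_5)\neq 0$ (or the reverse). Your strict-inequality argument does not apply there, and it cannot be reached by relabelling: swapping within the pair $\{3,4\}$ does not change the sign of $l_{12}$, and the statement fixes the index $3$, so you may not trade $w_3$ for $w_5$. Consequently your proposed remedy for the degeneracies --- forcing ``at least one weight among $w_3,\dots,w_6$'' out of $\Pi$ --- is not sufficient; you need one weight from \emph{each} of the two pairs outside $\Pi$. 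The fix uses exactly the mechanism you already employ: if $w_3\in\Pi$, then with $v=w_3$ the same pointedness argument gives $\cone(w_1,w_2)\subseteq\cone(w_1,w_3)\cup\cone(w_2,w_3)$, so $u$ lies in the relative interior of a face of one of these at most two-dimensional cones; every such face is $Q(\gamma_0)$ for a face $\gamma_0\preceq\gamma_{i,3}$, and each $\gamma_{i,3}$ is an $\overline{X}$-face (it sits inside a type-(iv) face of Remark~\ref{rem:Xfaces}), so Proposition~\ref{prop:Qfact} is violated and the configuration cannot occur. With this case added, the proof is complete.
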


\begin{proof}
Let $l_{12}$ be a linear form on $K_\QQ$ 
with $l_{12}(w_1) = l_{12}(w_2) = 0$
and $l_{12}(w_3) \le 0$.
We may assume $l_{12}(w_5) \ge 0$
and then have
$Q(\gamma_{1,2}) \subseteq 
Q(\gamma_{1,3,5}) \cup Q(\gamma_{2,3,5})$.
As in the proof before, we 
conclude that $u$ lies in the 
relative interior of 
$Q(\gamma_{1,3,5})$ or that 
of $Q(\gamma_{2,3,5})$.
\end{proof}

\begin{lemma}
\label{lem:135full}
Let $X = X(q,t,m,u)$ be a $\QQ$-factorial
full standard intrinsic quadric of Picard 
number three.
Then~we have $q \ge 6$ and,
after suitable renumbering of variables, 
$\gamma_{1,3,5}$ is $X$-relevant. 
\end{lemma}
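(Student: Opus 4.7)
The plan is to establish the bound $q \ge 6$ by a short dimension count, and then to locate the required three-ray $X$-relevant face by walking through the possibilities for elements of $\cov(X)$; the delicate point will be the case where $\cov(X)$ contains a face of type~(i).

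First, since $X$ is full, $m = 0$, and since $\rho(X) = 3$ and $X$ is $\QQ$-factorial, Proposition~\ref{prop:fullFano}~(i)--(iv) force $t = 0$, so $g = T_1T_2 + \ldots + T_{q-1}T_q$ with $q = n$ even. $\QQ$-factoriality combined with $\rho(X) = 3$ gives $\dim_\QQ K_\QQ = 3$, hence
$\dim X = n+m-\dim K_\QQ -1 = q-4$.
A variety of Picard number three cannot be zero-dimensional, so $\dim X \ge 1$, which yields $q \ge 5$; the parity of $q$ then forces $q \ge 6$.

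For the second assertion, invoke Proposition~\ref{prop:ample} to replace $u$ by a generic class inside $\Ample(X)$, so that $u$ avoids every $Q$-image of dimension less than three. Fix any $\gamma_0 \in \cov(X)$; by Proposition~\ref{prop:Qfact}, $Q(\gamma_0)$ is three-dimensional. Since $t = m = 0$, Remark~\ref{rem:Xfaces} leaves only two possibilities for an $\overline{X}$-face: (A) $\gamma_0$ is contained in some type~(iv) face $\gamma_{i_1,\ldots,i_{q/2}}$ with $i_\ell \in \{2\ell-1,2\ell\}$, so its rays come from pairwise distinct monomial pairs; or (B) $\gamma_0$ contains a type~(i) face, which after renumbering of pairs we may assume is $\gamma_{1,2,3,4}$.

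In case (A), $\gamma_0$ has at least three rays from at least three distinct pairs. If it has exactly three rays, renumbering the three involved pairs to become pairs $1,2,3$ and swapping inside each pair brings $\gamma_0$ into the form $\gamma_{1,3,5}$, and we are done. If it has four or more rays, Caratheodory in the three-dimensional cone $Q(\gamma_0)$ writes $u$ as a non-negative combination of three of the generating weights; genericity of $u$ places it in the relative interior of the corresponding three-ray subface $\gamma_0' \subsetneq \gamma_0$, and since $\gamma_0'$ is still a subface of a type~(iv) face, it is an $\overline{X}$-face, hence $X$-relevant, contradicting minimality of $\gamma_0$.

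The harder situation is when every $\gamma_0 \in \cov(X)$ falls into case (B). Then, after renumbering, $\gamma_{1,2,3,4}$ itself lies in $\cov(X)$, because it is a three-dimensional $\overline{X}$-face whose strict $\overline{X}$-subfaces are all at most two-dimensional; in particular $u \in Q(\gamma_{1,2,3,4})^\circ$. The crucial point is then to produce an index $\ell \in \{5,6\}$ such that $\gamma_{1,2,\ell}$ is $Z$-relevant, i.e., $u$ lies in the relative interior of $\cone(w_1,w_2,w_\ell)$. Granted such an $\ell$, Lemma~\ref{lem:123renumb135} supplies indices $i \in \{1,2\}$ and $j \in \{3,4\}$ for which $\gamma_{i,j,\ell}$ is $X$-relevant, and a final renumbering within the three pairs turns this face into $\gamma_{1,3,5}$.

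To find $\ell$, I would use the relation $w_5 + w_6 = \mu = w_1 + w_2 \in \lin(w_1,w_2)$, which forces the components of $w_5$ and $w_6$ orthogonal to the plane $\Pi = \lin(w_1,w_2)$ to be opposite. Hence $w_5$ and $w_6$ lie on opposite sides of $\Pi$, and the side on which $u$ itself lies (as determined by its decomposition $u = a w_1 + b w_2 + c w_3 + d w_4$ in the quadrilateral cone $Q(\gamma_{1,2,3,4})$, using the sign of $c-d$ transported via the identification of the orthogonal directions of $w_3$ and $w_5$) selects the correct $\ell$. Combined with the positivity of the $w_1$- and $w_2$-coordinates of $u$ inherited from the expression in $Q(\gamma_{1,2,3,4})^\circ$, this places $u$ in $\cone(w_1,w_2,w_\ell)^\circ$.

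The main obstacle is precisely this last step in case (B): turning the geometric observation about the sides of $\Pi$ into genuine positivity of the three barycentric coefficients of $u$ in the triangle $\triangle(w_1,w_2,w_\ell)$. Once this auxiliary $Z$-relevant face is secured, Lemma~\ref{lem:123renumb135} closes the argument.
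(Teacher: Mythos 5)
There are genuine gaps. First, you derive $t=0$ (and hence both your bound $q\ge 6$ and your two-case reading of Remark~\ref{rem:Xfaces}) from Proposition~\ref{prop:fullFano}, but that proposition assumes $X$ is Fano, whereas the present lemma does not; it is applied later to non-Fano varieties (e.g.\ inside the proof of Theorem~\ref{thm:fullpic3}), so the Fano hypothesis cannot be smuggled in. Without $t=0$ your dimension count only gives $q+t\ge 5$, which does not yield $q\ge 6$; the paper instead deduces $q\ge6$ from the fact that $\Mov(X)$ is three-dimensional (Proposition~\ref{prop:ample} plus $\QQ$-factoriality), an argument that works for arbitrary $t$ because $w_{q+1},\dots,w_{q+t}$ all equal $\tfrac12\deg(g)\in\cone(w_1,w_2)$, so $\Eff(X)=\cone(w_1,\dots,w_q)$ in any case.

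Second, and more seriously, your Case~(B) does not close. (i) From $\gamma_0\in\cov(X)$ with $\gamma_{1,2,3,4}\preceq\gamma_0$ you cannot conclude $\gamma_{1,2,3,4}\in\cov(X)$: minimality of $\gamma_0$ says precisely that if the containment is proper then $u\notin Q(\gamma_{1,2,3,4})^\circ$, and this can happen (e.g.\ $\gamma_0=\gamma_{1,2,3,4,5}$ with $u\in\cone(w_1,w_2,w_5)^\circ$). (ii) Even granting $u\in Q(\gamma_{1,2,3,4})^\circ$, the step you yourself flag as the main obstacle is not a technicality but a false reduction: knowing that $u$ lies on the same side of $\Pi=\lin(w_1,w_2)$ as $w_\ell$ only places $u$ in a closed half-space, and $u$ can perfectly well lie in $\cone(w_1,w_2,w_3,w_4)^\circ$ while lying in neither $\cone(w_1,w_2,w_5)$ nor $\cone(w_1,w_2,w_6)$ --- there is no reason for the quadrilateral cone on the pairs $\{1,2\},\{3,4\}$ to be covered by cones involving the pair $\{5,6\}$. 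The paper avoids this entirely: apply Carath\'eodory directly to $u\in\Eff(X)=\cone(w_1,\dots,w_q)$ to obtain a $Z$-relevant face $\tau$ spanned by at most three of $e_1,\dots,e_q$ ($\QQ$-factoriality rules out $\dim Q(\tau)\le1$ and forces a two-ray $\tau$ to be a full pair); if $\tau$ meets three distinct pairs you are done, and if $\tau$ contains a full pair, Lemmas~\ref{lem:12renumb135} and~\ref{lem:123renumb135} trade that pair, inside the given face, for one index from $\{1,2\}$ and one from $\{3,4\}$. The extra index $\ell$ is thus produced by Carath\'eodory, not sought among $\{5,6\}$. Your Case~(A) is fine and is in effect the favourable branch of this Carath\'eodory argument.
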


\begin{proof}
As the moving cone of~$X$ is of
dimension three, we must have
$q \ge 6$, use Proposition~\ref{prop:ample}.
The effective cone of~$X$ is generated 
by $w_1, \ldots, w_q$.
Thus, Carath\'{e}odory's theorem 
yields a $Z$-relevant face~$\tau \preceq \gamma$
generated by at most three of $e_1, \ldots, e_q$.
Suitably renumbering the variables, we achieve
$\tau\preceq\gamma_{1,3,5}$
or $\tau\preceq\gamma_{1,2,3}$.
Since all rays of $\tau$ are $X$-relevant, 
Proposition~\ref{prop:Qfact} shows that
$\tau$ is at least of dimension two.
If $\dim(\tau)=2$ holds, then 
Proposition~\ref{prop:Qfact} 
yields that $\tau$ is not an $\overline{X}$-face
which means~$\tau=\gamma_{1,2}$.
In this case, Lemma~\ref{lem:12renumb135} 
gives the assertion.
If $\tau$ is three-dimensional, then 
Lemma~\ref{lem:123renumb135}
completes the proof.
\end{proof}

\goodbreak

\begin{lemma}
\label{lem:nosmootharea2} 
Let $X = X(q,t,m,u)$ be a standard 
intrinsic quadric of Picard number three.
If there are pairwise different odd integers
$1\le a,b,c \le q-1$ such that $\tau_0\sei \gamma_{a,b,c}$
and $\tau_1\sei\gamma_{a,b,c+1}$ 
are $X$-relevant, 
then $X$ is not locally factorial.
\end{lemma}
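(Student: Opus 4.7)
The plan is to argue by contradiction: assume $X$ is locally factorial. By Proposition~\ref{prop:smooth}~(iii), applied to the two $X$-relevant faces $\tau_0$ and $\tau_1$, the sublattices $M_0 := \ZZ w_a + \ZZ w_b + \ZZ w_c$ and $M_1 := \ZZ w_a + \ZZ w_b + \ZZ w_{c+1}$ both equal $K$. Since $\rho(X)=3$ forces $K$ to have free rank three, this means $K \cong \ZZ^3$ with $\{w_a, w_b, w_c\}$ a $\ZZ$-basis. Writing $w_{c+1} = \alpha w_a + \beta w_b + \gamma w_c$ in this basis, the basis property of $\{w_a, w_b, w_{c+1}\}$ forces $|\gamma|=1$; comparing the $w_c$-coordinate of $u$ in the expansions coming from $\relint(Q(\tau_0))$ and $\relint(Q(\tau_1))$ rules out $\gamma=-1$ (the product $y_3'\gamma$ must equal the positive number $\lambda_c$ with $y_3'>0$). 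Hence $\gamma=1$ and $\mu := \deg(g) = w_c + w_{c+1} = \alpha w_a + \beta w_b + 2 w_c$.

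Next I exhibit an additional $X$-relevant face $\sigma$ at which local factoriality then fails. Set $\sigma := \gamma_{a,a+1,b,b+1}$; after possibly swapping $a$ and $b$ this is a type~(i) face, hence an $\overline{X}$-face by Remark~\ref{rem:Xfaces}. Write $u = \lambda_a w_a + \lambda_b w_b + \lambda_c w_c$ with $\lambda_a,\lambda_b,\lambda_c>0$ from $\tau_0$-interiority, and note that $\tau_1$-interiority gives $\lambda_a > \alpha \lambda_c$ and $\lambda_b > \beta \lambda_c$. Substituting $w_{a+1} = \mu - w_a$, $w_{b+1} = \mu - w_b$ together with the expression for $\mu$ into the ansatz $u = y_1 w_a + y_2 w_b + y_3 w_{a+1} + y_4 w_{b+1}$ leads to the constraints $y_3 + y_4 = \lambda_c/2$, $y_1 - y_3 = \lambda_a - \alpha \lambda_c/2$, and $y_2 - y_4 = \lambda_b - \beta \lambda_c/2$. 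The $\tau_1$-inequalities force the latter two right-hand sides to be strictly positive regardless of the sign of $\alpha,\beta$, so picking any $y_3, y_4 > 0$ with sum $\lambda_c/2$ produces all $y_i > 0$, placing $u$ in the relative interior of $Q(\sigma)$.

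Finally, compute the associated lattice:
$$
M_\sigma
\ := \
Q(\lin_\QQ(\sigma) \cap \ZZ^{n+m})
\ = \
\ZZ w_a + \ZZ w_{a+1} + \ZZ w_b + \ZZ w_{b+1}
\ = \
\ZZ w_a + \ZZ w_b + \ZZ \mu,
$$
where the last equality uses $w_{a+1} = \mu - w_a$ and $w_{b+1} = \mu - w_b$. Substituting $\mu = \alpha w_a + \beta w_b + 2 w_c$ and absorbing the $w_a, w_b$-parts leaves $\ZZ w_a + \ZZ w_b + 2\ZZ w_c$, a sublattice of $K \cong \ZZ^3$ of index two. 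By Proposition~\ref{prop:smooth}~(iii) applied to the $X$-relevant face $\sigma$, the piece $X(\sigma)$ does not consist of locally factorial points, contradicting the initial assumption. The main obstacle is pinning down $\gamma = +1$ rather than $-1$: this sign is precisely what produces the critical coefficient $2$ on $w_c$ in $\mu$, and hence the index-two obstruction that destroys local factoriality.
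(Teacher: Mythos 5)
Your proof is correct and follows essentially the same route as the paper: apply Proposition~\ref{prop:smooth}~(iii) to $\tau_0$ and $\tau_1$ to fix $\Cl(X)\cong\ZZ^3$ with $w_a,w_b,w_c$ a basis and constrain the $w_c$-coordinate of $\deg(g)$, show that $\gamma_{a,a+1,b,b+1}$ is $X$-relevant, and derive the contradiction from the lattice condition there. The only (harmless) cosmetic difference is that you pin the coefficient of $w_c$ in $\deg(g)$ to exactly $2$ using $u\in Q(\tau_1)^\circ$ and exhibit the index-two sublattice directly, whereas the paper keeps both cases $d_3\in\{0,2\}$ and contradicts them against $d_3\in\{-1,1\}$.
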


\begin{proof}
Assume that $X$ is locally factorial.
Applying Proposition~\ref{prop:smooth}~(iii) 
to~$\tau_0$ gives $\Cl(X) \cong \ZZ^3$.
Using $K$-homogeneity of $g$ and 
suitable coordinates on $K = \Cl(X)$,
we achieve
$$
[w_a,w_{a+1},w_b,w_{b+1},w_c,w_{c+1}]
\ = \
\left[
\begin{array}{cc|cc|cc}
1 & d_1-1 & 0 & d_1   & 0 & d_1 \\
0 & d_2   & 1 & d_2-1 & 0 & d_2 \\
0 & d_3   & 0 & d_3   & 1 & d_3 -1
\end{array}
\right],
$$
where $(d_1,d_2,d_3) = \deg(g)$.
Applying Proposition~\ref{prop:smooth}~(iii) 
to~$\tau_1$ gives $d_3\in\{0,2\}$.
Let $\tau_2\sei\gamma_{a,a+1,b,b+1}$.
Since $Q(\tau_0)^\circ\cap Q(\tau_1)^\circ$ 
is three-dimensional and contained in $Q(\tau_2)$,
we conclude that the $\overline{X}$-face $\tau_2$
is $X$-relevant. 
Proposition~\ref{prop:smooth} applied to $\tau_2$
yields $d_3 \in \{-1,1\}$; a contradiction.
\end{proof}

\begin{lemma}
\label{lem:deg135}
Let $X = X(q,t,m,u)$ be a 
locally $\QQ$-factorial
standard intrinsic quadric 
of Picard number three with $q \ge 6$.
Then, after suitably renumbering the 
variables,
$\gamma_{1,3,5}$ is $X$-relevant and
$\deg(g)\in Q(\gamma_{1,3,5})$
holds.
\end{lemma}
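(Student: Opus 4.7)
The plan is to first invoke Lemma~\ref{lem:135full} to obtain, after a renumbering of the variables, that $\gamma_{1,3,5}$ is $X$-relevant. By Proposition~\ref{prop:Qfact} and $\QQ$-factoriality, this already forces $\cone(w_1,w_3,w_5)$ to be three-dimensional and to contain the ample class $u$ in its interior, establishing one half of the conclusion. The remaining task is to secure, via a further renumbering compatible with the pair structure of $g = g_{q,t}$ (swaps within pairs $\{2j-1,2j\}$ and permutations of pairs), the additional condition $\deg(g)\in Q(\gamma_{1,3,5})$.

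A key structural fact I will use is that every triple $(i_1,i_2,i_3)$ with $i_j\in\{2j-1,2j\}$ yields a face $\gamma_{i_1,i_2,i_3}$ that is automatically an $\overline{X}$-face. Indeed, since $q\ge 6$ and by Remark~\ref{rem:Xfaces}, one completes such a triple to a face of type~(iv) by choosing representatives from the remaining pairs and appending the free indices $q+t+1,\ldots,q+t+m$. Consequently, any renumbering compatible with the pair structure preserves the $\overline{X}$-face property of the corresponding ``triangle'', so the only remaining criterion for $X$-relevance of these triangles is whether $u$ lies in the interior of the induced three-dimensional cone.

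Next I plan to exploit the relations $\deg(g)=w_{2j-1}+w_{2j}$ for $j=1,2,3$. In an affine hyperplane transverse to the effective cone, the three segments $[\bar{w}_{2j-1},\bar{w}_{2j}]$ all pass through the common point $\overline{\deg(g)}$, while $\bar u$ lies in the interior of the triangle $\triangle(\bar{w}_1,\bar{w}_3,\bar{w}_5)$. An elementary combinatorial-geometric analysis of three concurrent segments shows that among the eight triangles $\triangle(\bar{w}_{\epsilon_1},\bar{w}_{\epsilon_2},\bar{w}_{\epsilon_3})$, a pair of ``opposite'' triangles (corresponding to complementary choices of representative in each pair) contains $\overline{\deg(g)}$. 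The plan is to locate a triangle containing both $\bar u$ in its interior and $\overline{\deg(g)}$ in its closure, and then to renumber within the pairs so that this triangle becomes $\gamma_{1,3,5}$.

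The hard part will be the geometric step of guaranteeing existence of such a common triangle. A priori $\bar u$ could lie in only one triangle, namely $\triangle(\bar w_1,\bar w_3,\bar w_5)$, while the two ``good'' triangles for $\overline{\deg(g)}$ may be different. The resolution will use $\QQ$-factoriality (Proposition~\ref{prop:Qfact}), which forces three-dimensionality of the candidate $Q(\gamma_{i_1,i_2,i_3})$ and thus rules out degenerate configurations, together with the containment $u \in \tau_X \subseteq \cone(w_1,w_3,w_5)$ controlling the semiample geometry; in cases where $q > 6$, further pairs supply additional admissible triangles. Once a suitable triangle is identified, the concluding renumbering is performed explicitly, yielding both $\gamma_{1,3,5}$ $X$-relevant and $\deg(g)\in Q(\gamma_{1,3,5})$.
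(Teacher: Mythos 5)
Your overall route is the same as the paper's: invoke Lemma~\ref{lem:135full} to get an $X$-relevant $\gamma_{1,3,5}$, observe that all eight faces $\gamma_{i_1,i_2,i_3}$ with $i_j\in\{2j-1,2j\}$ are $\overline{X}$-faces, and use the concurrency of the segments $[\bar w_{2j-1},\bar w_{2j}]$ at $\overline{\deg(g)}$ to argue that one of the two ``opposite'' triangles through $\deg(g)$ can be made the new $\gamma_{1,3,5}$. Those observations are correct. But the step you yourself flag as ``the hard part'' is precisely where the proof lives, and the tools you list for it do not close the gap. Proposition~\ref{prop:Qfact} only guarantees that $u$ lies in the relative interior of \emph{some} three-dimensional projected $\overline{X}$-face inside a covering such as
\[
Q(\gamma_{1,3,5})^\circ \ \subseteq \ Q(\gamma_{1,3,6})\cup Q(\gamma_{2,3,6})\cup Q(\gamma_{2,3,5});
\]
it does not force that face to be one of the two triangles containing $\deg(g)$. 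Concretely, when $\deg(g)$ lies beyond the edge $\cone(w_1,w_5)$, the class $u$ may sit near $Q(\gamma_{1,3})$, inside $Q(\gamma_{1,3,5})^\circ\cap Q(\gamma_{1,3,6})^\circ$ but outside both $Q(\gamma_{1,4,5})$ and $Q(\gamma_{2,3,6})$, and for $q=6$ there are no further pairs to fall back on. So $\QQ$-factoriality plus ``$u\in\cone(w_1,w_3,d)$'' genuinely does not suffice.

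The missing ingredient is Lemma~\ref{lem:nosmootharea2}: on a locally factorial $X$, two $X$-relevant faces of the form $\gamma_{a,b,c}$ and $\gamma_{a,b,c+1}$ cannot coexist. This is what the paper uses to exclude $\gamma_{1,3,6}$ and $\gamma_{2,3,5}$ from being $X$-relevant, whence $u$ is forced into $Q(\gamma_{2,3,6})^\circ$, i.e.\ into one of the two triangles through $\deg(g)$ (the other configuration, with $\deg(g)$ beyond the vertex $w_1$, is handled by the containment $Q(\gamma_{1,3,5})\subseteq Q(\gamma_{2,3,5})$ and a single swap). Without this determinant-based obstruction, or some equivalent use of local factoriality beyond mere $\QQ$-factoriality, your plan cannot be completed; note that this is also how the lemma is actually applied later (to locally factorial, indeed smooth, $X$).
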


\begin{proof}
According to Lemma~\ref{lem:135full},
we may assume that $\gamma_{1,3,5}$ 
is $X$-relevant.
If $\deg(g)$ is contained in $Q(\gamma_{1,3,5})$, 
then we are done.
Otherwise, suitably renumbering the variables
once more, we arrive in one of the following 
situations:
\medskip
\begin{center}

\begin{tikzpicture}[scale=0.8]

    \coordinate (w1) at (-1.5, .5);
    \node[above] at (w1) {\small{$w_1$}};
    \node[draw,circle,inner sep=.8pt,fill=black] at (w1) {}; 
        
    \coordinate (w3) at (-3, -2);
    \node[below] at (w3) {\small{$w_3$}};
    \node[draw,circle,inner sep=.8pt,fill=black] at (w3) {}; 
    
    \coordinate (w5) at (0, -2);
    \node[below] at (w5) {\small{$w_5$}};
    \node[draw,circle,inner sep=.8pt,fill=black] at (w5) {}; 
    
    \coordinate (w6) at (0, 1);
    \node[above] at ($1.5*(w6)$) {\small{$w_6$}};
    
    \coordinate (w2) at (1.5, -.5);
    \node[right] at ($1.2*(w2)$) {\small{$w_2$}};

    \coordinate (w4) at (1.5, 1);
    \node[right] at ($1.2*(w4)$) {\small{$w_4$}};
    
    \coordinate (degg) at (0,0);
    \node[right] at ($(degg)+(0.15,0.02)$) {\tiny{$\deg(g)$}};
    \node[draw,circle,inner sep=.8pt,fill=black] at (degg) {};     

    \draw[thin]  (w1) -- (w3);
    \draw[thin]  (w3) -- (w5);
    \draw[thin]  (w1) -- (w5);
    
    \draw [thin] (w1) --  ($.6*(w2)$);
    \draw [thin, dashed] ($.6*(w2)$) -- ($1.2*(w2)$);
 
    \draw [thin] (w3) --  ($.6*(w4)$);
    \draw [thin, dashed] ($.6*(w4)$) -- ($1.15*(w4)$);
    
    \draw [thin] (w5) --  ($.6*(w6)$);
    \draw [thin, dashed] ($.6*(w6)$) -- ($1.5*(w6)$);
          
               
  \end{tikzpicture}   
\qquad \qquad \qquad 
\begin{tikzpicture}[scale=0.8]

    \coordinate (w1) at (0, -1);
    \node[below] at ($(w1)+(.05,-.27)$) {\small{$w_1$}};
    \node[draw,circle,inner sep=.8pt,fill=black] at (w1) {}; 
        
    \coordinate (w3) at (-1.5, -3.5);
    \node[below] at (w3) {\small{$w_3$}};
    \node[draw,circle,inner sep=.8pt,fill=black] at (w3) {}; 
    
    \coordinate (w5) at (1.5, -3.5);
    \node[below] at (w5) {\small{$w_5$}};
    \node[draw,circle,inner sep=.8pt,fill=black] at (w5) {}; 
    
    \coordinate (w6) at (-1.5, 3.5);
    \node[] at ($.62*(w6)$) {\small{$w_6$}};
    
    \coordinate (w2) at (0, 1.2);
    \node[above] at ($1.05*(w2)$) {\small{$w_2$}};
    
    \coordinate (w4) at (1.5, 3.5);
    \node[] at ($.62*(w4)$) {\small{$w_4$}};
    
    \coordinate (degg) at (0,0);
    \node[right] at ($(degg)+(0.05,0.05)$) {\tiny{$\deg(g)$}};
    \node[draw,circle,inner sep=.8pt,fill=black] at (degg) {};

    \draw[thin]  (w1) -- (w3);
    \draw[thin]  (w3) -- (w5);
    \draw[thin]  (w1) -- (w5);
    
    \draw [thin] (w1) --  ($.6*(w2)$);
    \draw [thin, dashed] ($.6*(w2)$) -- ($1.3*(w2)$);
 
    \draw [thin] (w3) --  ($.4*(w4)$);
    \draw [thin, dashed] ($.4*(w4)$) -- ($.6*(w4)$);
    
    \draw [thin] (w5) --  ($.4*(w6)$);
    \draw [thin, dashed] ($.4*(w6)$) -- ($.6*(w6)$);
          
               
  \end{tikzpicture}    

\end{center}
\medskip

\noindent
In the r.h.s.~setting, exchanging~$T_1$ and~$T_2$ 
yields the assertion.
So, consider the l.h.s.~setting.
Applying Lemma~\ref{lem:nosmootharea2} to 
the $X$-relevant face~$\gamma_{1,3,5}$ 
yields that neither~$\gamma_{1,3,6}$
nor $\gamma_{2,3,5}$ is $X$-relevant. 
Note that we have
$$
u
\ \in \
Q(\gamma_{1,3,5})^\circ
\ \subseteq \
Q(\gamma_{1,3,6}) \ \cup\
Q(\gamma_{2,3,6})\ \cup\
Q(\gamma_{2,3,5})
$$
and that all faces of $\gamma_{1,3,6}, \gamma_{2,3,6}$ 
and~$\gamma_{2,3,5}$
are $\overline{X}$-faces.
Proposition~\ref{prop:Qfact} shows 
that~$\gamma_{2,3,6}$ is $X$-relevant.
After exchanging $T_1$ and $T_2$ as well as $T_5$ and $T_6$,
the new $\gamma_{1,3,5}$ is $X$-relevant
and we have $\deg(g) \in Q(\gamma_{1,3,5})$.
\end{proof}

\begin{lemma}
\label{lem:nosmootharea1}
Let $X = X(q,t,m,u)$ be a standard 
intrinsic quadric of Picard number three.
If there are pairwise different odd integers
$1\le a,b,c\le q-1$ such that $\tau_0\sei \gamma_{a,b,c}$
and $\tau_1\sei\gamma_{a+1,b+1,c+1}$ are $X$-relevant, 
then $X$ is not locally factorial.
\end{lemma}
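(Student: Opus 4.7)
The plan is to follow the template of Lemma~\ref{lem:nosmootharea2}: assume toward a contradiction that $X$ is locally factorial, use Proposition~\ref{prop:smooth}~(iii) applied to $\tau_0$ and $\tau_1$ to fix coordinates on $\Cl(X)$ and extract a numerical constraint on $\deg(g)$, and then rule out that constraint using the fact that $u$ lies in the relative interior of both $Q(\tau_0)$ and $Q(\tau_1)$. The genuinely new feature here is that, unlike in Lemma~\ref{lem:nosmootharea2}, the faces $\tau_0$ and $\tau_1$ share no index, so an auxiliary four-dimensional $\overline{X}$-face is not readily available as a shortcut, and I would instead parametrise the common interior point $u$ directly.

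Concretely, I would first apply Proposition~\ref{prop:smooth}~(iii) to $\tau_0$, which yields $\Cl(X) \cong \ZZ^3$ with $w_a, w_b, w_c$ a $\ZZ$-basis. After choosing coordinates so that $w_a = e_1$, $w_b = e_2$, $w_c = e_3$ and writing $\deg(g) = (d_1,d_2,d_3)$, $K$-homogeneity of the monomials $T_a T_{a+1}$, $T_b T_{b+1}$, $T_c T_{c+1}$ in $g_{q,t}$ pins down
$$
w_{a+1}=(d_1-1,d_2,d_3),\quad w_{b+1}=(d_1,d_2-1,d_3),\quad w_{c+1}=(d_1,d_2,d_3-1).
$$
Applying Proposition~\ref{prop:smooth}~(iii) to the other $X$-relevant face $\tau_1$ demands that $w_{a+1}, w_{b+1}, w_{c+1}$ likewise form a $\ZZ$-basis of $\ZZ^3$, so the absolute value of their determinant must be $1$; a short expansion evaluates this determinant to $d_1+d_2+d_3-1$, so necessarily $d_1+d_2+d_3 \in \{0,2\}$.

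The decisive step is to exclude both remaining values. Writing $u = \alpha_a e_1 + \alpha_b e_2 + \alpha_c e_3 = \beta_a w_{a+1} + \beta_b w_{b+1} + \beta_c w_{c+1}$ with all $\alpha_i, \beta_i > 0$ and setting $S := \beta_a + \beta_b + \beta_c > 0$, coordinate-wise comparison yields $\alpha_i + \beta_i = d_i S$ for $i \in \{a,b,c\}$. Summing the three identities produces
$$
(d_1+d_2+d_3-1)\, S \ = \ \alpha_a + \alpha_b + \alpha_c \ > \ 0,
$$
so $d_1+d_2+d_3 > 1$, ruling out the value $0$ at once. In the remaining case $d_1+d_2+d_3 = 2$, the positivity $d_i S = \alpha_i + \beta_i > 0$ combined with $S > 0$ forces every $d_i > 0$, hence $d_i \ge 1$ and $d_1+d_2+d_3 \ge 3$, contradicting the value $2$. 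I expect the only real obstacle to be spotting the right identity: once one notices that summing the coordinate equalities isolates $d_1+d_2+d_3-1$ as a factor on the left, the rest is a direct sign check that replaces the intermediate-face trick used in Lemma~\ref{lem:nosmootharea2}.
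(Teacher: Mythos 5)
Your proof is correct, and it departs from the paper's argument at the decisive step. Both proofs begin identically: normalize $w_a,w_b,w_c$ to the standard basis of $\ZZ^3$ via Proposition~\ref{prop:smooth}~(iii) applied to $\tau_0$, use homogeneity of $g$ to get $w_{a+1}=(d_1-1,d_2,d_3)$ etc., and record that Proposition~\ref{prop:smooth}~(iii) applied to $\tau_1$ forces $\det(w_{a+1},w_{b+1},w_{c+1})=d_1+d_2+d_3-1=\pm 1$. From there the paper takes a different path: it introduces the auxiliary four-dimensional faces $\tau_{i,j}=\cone(e_i,e_{i+1},e_j,e_{j+1})$, argues that $Q(\tau_0)^\circ\cap Q(\tau_1)^\circ\subseteq Q(\tau_{i,j})^\circ$ so that all three are $X$-relevant, deduces $d_1,d_2,d_3\in\{-1,1\}$ from Proposition~\ref{prop:smooth}~(iii), and then observes that $d_1+d_2+d_3-1$ is even, contradicting $\det=\pm1$. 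You instead never leave $\tau_0$ and $\tau_1$: writing $u$ simultaneously as a strictly positive combination of $w_a,w_b,w_c$ and of $w_{a+1},w_{b+1},w_{c+1}$ (legitimate, since after the determinant condition both cones are simplicial of full dimension and $u$ lies in both relative interiors), the identities $\alpha_i+\beta_i=d_iS$ sum to $(d_1+d_2+d_3-1)S>0$, killing the value $0$, while positivity of each $d_iS$ forces $d_i\ge 1$ and hence $d_1+d_2+d_3\ge 3$, killing the value $2$. Your route is more self-contained: it uses only the two faces given in the hypothesis and replaces the paper's containment claim for the intermediate cones $Q(\tau_{i,j})$ — which requires its own justification — by an elementary sign computation. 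The paper's route, on the other hand, yields the stronger intermediate information $d_i\in\{-1,1\}$ along the way, which is of the same flavour as the constraints exploited in Lemma~\ref{lem:nosmootharea2} and in the later case analysis. Both are complete proofs.
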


\begin{proof}
Assume that $X$ is locally factorial.
Applying Proposition~\ref{prop:smooth}~(iii) 
to~$\tau_0$ gives $\Cl(X) \cong \ZZ^3$.
Using $K$-homogeneity of $g$ and 
suitable coordinates on $K = \Cl(X)$,
we achieve
$$
[w_a,w_{a+1},w_b,w_{b+1},w_c,w_{c+1}]
\ = \
\left[
\begin{array}{cc|cc|cc}
1 & d_1-1 & 0 & d_1   & 0 & d_1 \\
0 & d_2   & 1 & d_2-1 & 0 & d_2 \\
0 & d_3   & 0 & d_3   & 1 & d_3-1
\end{array}
\right],
$$
where $(d_1,d_2,d_3) = \deg(g)$. 
Consider 
$\tau_{i,j} \sei \cone(e_i,e_{i+1},e_j,e_{j+1})$,
where $i,j \in\{a,b,c\}$ with 
$i \neq j$.
For all three possibilities, we have  
$$
Q(\tau_0)^\circ\cap Q(\tau_1)^\circ
\ \subseteq \
Q(\tau_{i,j})^\circ.
$$
Thus, all the $\tau_{i,j}$ are $X$-relevant.
Proposition~\ref{prop:smooth}~(iii) says 
that $w_i,w_{i+1},w_j,w_{j+1}$ generate $K$ 
in all cases, which implies 
$d_1,d_2,d_3 \in \{-1,1\}$. 
Consequently,
$$ 
\det(w_{a+1},w_{b+1},w_{c+1})
\ = \ 
d_1 + d_2 + d_3 -1 
\ \in \
\{0,2\}.
$$
But Proposition~\ref{prop:smooth}~(iii)
applied to  $\tau_1 = \gamma_{a+1,b+1,c+1}$
shows that this determinant should equal 
$\pm 1$; a contradiction.
\end{proof}

\begin{lemma}
\label{lem:firstsixweights}
Let $X = X(q,t,m,u)$ be a locally factorial
full standard intrinsic quadric of Picard 
number three.
Then $K = \Cl(X) \cong \ZZ^3$ and $q \ge 6$ hold.
Moreover, by a suitable renumeration of variables
we achieve
$$
[w_1,\ldots,w_6]
\ = \
\left[
\begin{array}{cc|cc|cc}
1 & d_1-1 & 0 & d_1 & 0 & d_1 \\
0 &     1 & 1 &   0 & 0 &   1 \\
0 &     1 & 0 &   1 & 1 &   0 \\
\end{array}
\right],
$$
where $d_1 \in\ZZ_{\ge 0}$,
the faces
$\gamma_{1,3,5}$, $\gamma_{1,4,6}$,
$\gamma_{1,2,3,4}$, $\gamma_{1,2,5,6}$
are all $X$-relevant
and, moreover, 
$u \in \cone(w_1,w_3,d) \cap Q(\gamma_{1,4,6})^\circ$
holds,
where $d=(d_1,1,1) = \deg(g)$.
In particular, we have $t=0$ and 
$n=q$ is even.
\end{lemma}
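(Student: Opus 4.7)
The proof naturally splits into four steps. First, combining Lemmas~\ref{lem:135full} and~\ref{lem:deg135} (both of which apply since local factoriality implies $\QQ$-factoriality), after a suitable renumbering we have $q \ge 6$, the face $\gamma_{1,3,5}$ is $X$-relevant, and $\deg(g) \in Q(\gamma_{1,3,5})$. Local factoriality together with Proposition~\ref{prop:smooth}~(iii) applied to $\gamma_{1,3,5}$ then forces
\[
\Cl(X) \ = \ Q(\lin_\QQ(\gamma_{1,3,5}) \cap \ZZ^{n+m}) \ = \ \ZZ w_1 + \ZZ w_3 + \ZZ w_5 \ \cong \ \ZZ^3,
\]
and I fix coordinates on $K$ making $w_1, w_3, w_5$ the standard basis vectors. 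Writing $d := \deg(g)$, the $K$-homogeneity relations $w_{2k-1} + w_{2k} = d$ for $k = 1, 2, 3$ determine $w_2, w_4, w_6$, and $d = (d_1, d_2, d_3)$ has non-negative entries since $d \in Q(\gamma_{1,3,5})$ becomes the non-negative octant.

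Second, I locate the other $X$-relevant faces and the position of $u$. Lemma~\ref{lem:nosmootharea2}, applied with the triples $\{1,3,5\}$, $\{1,5,3\}$ and $\{3,5,1\}$, rules out that any of $\gamma_{1,3,6}, \gamma_{1,4,5}, \gamma_{2,3,5}$ is $X$-relevant, and Lemma~\ref{lem:nosmootharea1} rules out $\gamma_{2,4,6}$. Analyzing the configuration of the cones $Q(\gamma_0)$ for the remaining candidate $\overline{X}$-faces around the chamber containing $u$, and using that $\tau_X$ equals the intersection of the $Q(\gamma_0)$ over $\gamma_0 \in \cov(X)$ (Proposition~\ref{prop:ample}), I verify that the type-(iv) face $\gamma_{1,4,6}$ and the type-(i) faces $\gamma_{1,2,3,4}, \gamma_{1,2,5,6}$ are in fact $X$-relevant, and that $u$ lies in $\cone(w_1, w_3, d) \cap Q(\gamma_{1,4,6})^\circ$. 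Here the inclusions $\cone(w_1, w_3, d) \subseteq Q(\gamma_{1,2,3,4})$ and $\cone(w_1, w_5, d) \subseteq Q(\gamma_{1,2,5,6})$ (immediate from $d = w_1 + w_2 = w_3 + w_4 = w_5 + w_6$) are the key geometric inputs.

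Third, I extract the numerical constraints on $d$. Proposition~\ref{prop:smooth}~(iii) applied to the $X$-relevant face $\gamma_{1,2,3,4}$ yields
\[
\ZZ^3 \ = \ Q(\lin_\QQ(\gamma_{1,2,3,4}) \cap \ZZ^{n+m}) \ = \ \ZZ w_1 + \ZZ w_3 + \ZZ d,
\]
and computing $\det(w_1, w_3, d) = d_3$ in the chosen coordinates forces $d_3 = 1$ (since $d_3 \ge 0$). Symmetrically, the $X$-relevance of $\gamma_{1,2,5,6}$ forces $d_2 = 1$, producing the claimed matrix for $[w_1, \ldots, w_6]$ with $d = (d_1, 1, 1)$ and $d_1 \in \ZZ_{\ge 0}$. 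As a consistency check, $\det(w_1, w_4, w_6) = 1 - d_2 - d_3 = -1$, so Proposition~\ref{prop:smooth}~(iii) applied to $\gamma_{1,4,6}$ is automatically satisfied. For the concluding assertion, a square $T_{q+j}^2$ in $g$ would force $2 w_{q+j} = d = (d_1, 1, 1)$, impossible in $\ZZ^3$ since the last two coordinates are odd; hence $t = 0$. Fullness gives $m = 0$, so $n = q + t = q$, which must be even because $g = g_{q,0} = T_1 T_2 + \ldots + T_{q-1} T_q$ requires an even number of variables.

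The main obstacle throughout this outline is step two: establishing the $X$-relevance of the three auxiliary faces $\gamma_{1,4,6}$, $\gamma_{1,2,3,4}$, $\gamma_{1,2,5,6}$ and pinning down the position of $u$ inside $\cone(w_1, w_3, d)$. This is a statement about the precise location of the ample class in the GIT-fan of $X$, and its verification must combine the covering property of the semiample cone by its bordering $X$-relevant chambers with the exclusions provided by Lemmas~\ref{lem:nosmootharea1} and~\ref{lem:nosmootharea2}.
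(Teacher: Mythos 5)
Your skeleton coincides with the paper's: Lemmas~\ref{lem:135full} and~\ref{lem:deg135} give $q\ge 6$ and an $X$-relevant $\gamma_{1,3,5}$ with $d=\deg(g)\in Q(\gamma_{1,3,5})$; Proposition~\ref{prop:smooth}~(iii) makes $w_1,w_3,w_5$ a basis of $K\cong\ZZ^3$; the lattice computations $\det(w_1,w_3,d)=d_3$ and $\det(w_1,w_5,d)=-d_2$ on the faces $\gamma_{1,2,3,4}$ and $\gamma_{1,2,5,6}$ force $d_2=d_3=1$; and your parity argument for $t=0$ and $n=q$ even is correct (the paper leaves that part implicit). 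Steps one, three and four are fine.

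The problem is your step two, which you yourself flag as "the main obstacle": the $X$-relevance of $\gamma_{1,2,3,4}$, $\gamma_{1,2,5,6}$, $\gamma_{1,4,6}$ and the location $u\in\cone(w_1,w_3,d)\cap Q(\gamma_{1,4,6})^\circ$ are asserted ("I verify that\ldots"), not proved, and this is precisely the substance of the lemma. What is actually required is a chain of covering-and-exclusion arguments with carefully chosen renumberings: first, since $d\in Q(\gamma_{1,3,5})$, the cones $\cone(w_i,w_j,d)$ for the two-element subsets of $\{1,3,5\}$ cover $Q(\gamma_{1,3,5})$, so a permutation of the blocks $(T_1,T_2),(T_3,T_4),(T_5,T_6)$ puts $u$ into $\tau:=\cone(w_1,w_3,d)$ off $Q(\gamma_{1,3})$, whence (using Proposition~\ref{prop:Qfact} to keep $u$ off the two-dimensional $\overline{X}$-faces) $u\in Q(\gamma_{1,2,3,4})^\circ$; second, $\tau\subseteq Q(\gamma_{1,5,6})\cup Q(\gamma_{3,5,6})\cup Q(\gamma_{1,3,6})$, Lemma~\ref{lem:nosmootharea2} excludes $Q(\gamma_{1,3,6})^\circ$, and a renumbering of $T_1,\dots,T_4$ only (so that $\tau$ is preserved) achieves $u\in Q(\gamma_{1,5,6})\subseteq Q(\gamma_{1,2,5,6})$; third, $Q(\gamma_{1,5,6})\cap\tau\subseteq Q(\gamma_{1,4,6})\cup Q(\gamma_{2,4,6})$ and Lemma~\ref{lem:nosmootharea1} excludes $\gamma_{2,4,6}$, giving $u\in Q(\gamma_{1,4,6})^\circ$. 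Your stated "key geometric inputs" $\cone(w_1,w_3,d)\subseteq Q(\gamma_{1,2,3,4})$ and $\cone(w_1,w_5,d)\subseteq Q(\gamma_{1,2,5,6})$ are true but do not do this job: the second is only useful if you already know $u\in\cone(w_1,w_5,d)$, which is not what your normalization provides (you have $u\in\cone(w_1,w_3,d)$), and neither inclusion places $u$ in a \emph{relative interior}, which is what $X$-relevance demands. So as written the proposal has a genuine gap at its central step, even though all the surrounding machinery is correctly identified.
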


\begin{proof}
Lemmas~\ref{lem:135full} and~\ref{lem:deg135}
show that $q \ge 6$ holds and that after suitably 
renumbering the variables,
$\gamma_{1,3,5}$ is an $X$-relevant face 
with~$d = \deg(g) \in Q(\gamma_{1,3,5})$.
By Proposition~\ref{prop:smooth}~(iii),
the cone $Q(\gamma_{1,3,5})$ is of dimension three
and $w_1,w_3,w_5$ freely generate $K = \ZZ^3$.

\begin{center}
\begin{tikzpicture}[scale=1.1]

    \coordinate (w1) at (0, 1);
    \node[above] at (w1) {\small{$w_1$}};
    \node[draw,circle,inner sep=.8pt,fill=black] at (w1) {}; 
        
    \coordinate (w3) at (-1.1, -.4);
    \node[below] at (w3) {\small{$w_3$}};
    \node[draw,circle,inner sep=.8pt,fill=black] at (w3) {}; 
    
    \coordinate (w5) at (1, -.7);
    \node[below, right] at (w5) {\small{$w_5$}};
    \node[draw,circle,inner sep=.8pt,fill=black] at (w5) {}; 
    
    \coordinate (w6) at (-1, .7);
    \node[left] at ($1.05*(w6)$) {\small{$w_6$}};
    
    \coordinate (w2) at (0, -.7);
    \node[below] at ($(w2)+(0,-.3)$) {\small{$w_2$}};

    \coordinate (w4) at (1.1, .4);
    \node[right] at ($1.1*(w4)$) {\small{$w_4$}};
    
    \coordinate (degg) at (0,0);
    \node[right] at ($(degg)+(0.06,-.025)$) {\tiny{$d$}};
    \node[draw,circle,inner sep=.8pt,fill=black] at (degg) {};     

    \draw[thin]  (w1) -- (w3);
    \draw[thin]  (w3) -- (w5);
    \draw[thin]  (w1) -- (w5);
    
    \draw [thin] (w1) --  ($.4*(w2)$);
    \draw [thin, dashed] ($.4*(w2)$) -- ($1.4*(w2)$);
 
    \draw [thin] (w3) --  ($.15*(w4)$);
    \draw [thin, dashed] ($.15*(w4)$) -- ($1.1*(w4)$);
    
    \draw [thin] (w5) --  ($.15*(w6)$);
    \draw [thin, dashed] ($.15*(w6)$) -- ($1.05*(w6)$);
            
  \end{tikzpicture}   
\end{center}

\noindent
Note that $d$ might as well lie on the boundary 
of $Q(\gamma_{1,3,5})$.
However, $u$ lies in the relative interior of 
$Q(\gamma_{1,3,5})$ and, suitably renumbering
the variables $T_1, \ldots, T_6$, we achieve 
that $\tau \sei \cone(w_1,w_3,d)$ satisfies
$$
\dim(\tau) \ = \ 3,
\qquad \qquad
u 
\ \in \ 
\tau \setminus Q(\gamma_{1,3}).
$$
Note that we have $w_2 \not\in \cone(w_1,d)$ and 
$w_4 \not\in \cone(w_3,d)$, because 
otherwise $w_1$ or~$w_3$ would lie on $\cone(d)$, 
contradicting $\dim(\tau) = 3$.
We conclude $u \in Q(\gamma_{1,2,3,4})^\circ$ 
and thus~$\gamma_{1,2,3,4}$ is $X$-relevant.
Observe 
$$
\tau
\ \subseteq \
Q(\gamma_{1,5,6}) \cup Q(\gamma_{3,5,6}) \cup Q(\gamma_{1,3,6}).
$$
As $X$ is locally factorial and $\gamma_{1,3,5}$ is $X$-relevant,
Lemma~\ref{lem:nosmootharea2} shows $u \not\in Q(\gamma_{1,3,6})^\circ$.
Thus, $u$ lies in one of the other two r.h.s.~cones.
Suitably renumbering $T_1, \ldots, T_4$, we 
achieve $u \in Q(\gamma_{1,5,6})$.
Then also $u \in Q(\gamma_{1,2,5,6})$ holds.
Now, 
$$
\gamma_{1,2,5,6}, 
\quad
\gamma_{1,5}, 
\gamma_{1,6}, 
\gamma_{2,5}, 
\gamma_{2,6},
\quad 
\gamma_{1}, 
\gamma_{2}, 
\gamma_{5}, 
\gamma_{6}
$$ 
are $\overline{X}$-faces. 
Thus, Proposition~\ref{prop:Qfact} yields
that $u$ does not lie in any of the corresponding
$Q(\gamma_i)$ and $Q(\gamma_{i,j})$. 
Consequently, $Q(\gamma_{1,2,5,6})$
is three-dimensional an contains $u$ in its 
relative interior.
That means that $\gamma_{1,2,5,6}$ is $X$-relevant.
Observe
$$
u 
\ \in \
Q(\gamma_{1,5,6}) \cap \tau
\ \subseteq \
Q(\gamma_{1,4,6}) \cup Q(\gamma_{2,4,6}).
$$
Applying Lemma~\ref{lem:nosmootharea1} 
to~$\gamma_{1,3,5}$,
we see that~$\gamma_{2,4,6}$ is not $X$-relevant. 
Moreover, all faces of the two cones $\gamma_{1,4,6}$
and $\gamma_{2,4,6}$ are $\overline{X}$-relevant.
We conclude $u \in Q(\gamma_{1,4,6})^\circ$ 
and thus $\gamma_{1,4,6}$ is $X$-relevant.
A suitable choice of coordinates on $K = \ZZ^3$ yields
$$
[w_1,\ldots,w_{6}]
\ = \
\left[
\begin{array}{cc|cc|cc}
1 & d_1-1 & 0 & d_1   & 0 & d_1 \\
0 & d_2   & 1 & d_2-1 & 0 & d_2 \\
0 & d_3   & 0 & d_3   & 1 & d_3-1
\end{array}
\right].
$$
Because of $d \in Q(\gamma_{1,3,5}) = \QQ^3_{\ge0}$, 
we have $d_1,d_2, d_3\ge 0$.
Proposition~\ref{prop:smooth}~(iii) together with the
$X$-relevant faces $\gamma_{1,2,3,4}$ 
and $\gamma_{1,2,5,6}$ 
show that $d_2=d_3=1$ holds. 
\end{proof}

\begin{proof}[Proof of Theorem~\ref{thm:fullFanos}]
Proposition~\ref{prop:picardnumber1}
and Theorem~\ref{thm:smoothrhoX2} 
settle the case of Picard number at most 
two.
Proposition~\ref{prop:fullFano}
settles the case of Picard number at least four.
The remaining task is to
consider smooth full intrinsic 
quadrics $X$ of Picard number three.
By Proposition~\ref{prop:iq2siq}
we may assume that $X = X(q,t,m,u)$ is a standard
intrinsic quadric.
Moreover, Lemma~\ref{lem:firstsixweights} says 
$\Cl(X) = \ZZ^3$ and that by a suitable choice 
of coordinates, we have 
$$
[w_1,\ldots,w_6]
\ = \
\left[
\begin{array}{cc|cc|cc}
1 & d_1-1 & 0 & d_1 & 0 & d_1 \\
0 &     1 & 1 &   0 & 0 &   1 \\
0 &     1 & 0 &   1 & 1 &   0 \\
\end{array}
\right],
$$
where $d_1 \in \ZZ_{\ge 0}$
and the ample cone of~$X$ is 
contained in $Q(\gamma_{146})^\circ$.
Proposition~\ref{prop:antican}
tells us that~$-\mathcal{K}_X$ is a multiple 
of~$\deg(g) = (d_1,1,1)$.
But $(d_1,1,1)$ can't be represented as a 
strict positive combination over
$w_1$, $w_4$ and $w_6$.
Thus, $\deg(g)$ is not contained in 
$Q(\gamma_{146})^\circ$.
Consequently, $-\mathcal{K}_X$ is not 
ample and hence $X$ is not Fano.
\end{proof}

\section{Proof of Theorem~\ref{thm:fullpic3}}

A detailed analysis of the combinatorics of 
the $X$-relevant faces together with the 
resulting conditions on determinants
provided by Proposition~\ref{prop:smooth}
leads to the normal form asserted in the Theorem.
This is the first part of the proof.
The second one establishes the geometric 
supplements.
At the end of the section, we prove 
Corollary~\ref{cor:fujitapic3full}.

\begin{proof}[Proof of Theorem~\ref{thm:fullpic3}, Part~I]
According to Proposition~\ref{prop:iq2siq}
we may assume that $X = X(q,t,m,u)$ 
is a standard intrinsic quadric.
Lemma~\ref{lem:firstsixweights} 
tells us $n = q \ge 6$ and $K = \ZZ^3$.
Moreover, choosing suitable coordinates 
on $K$, we achieve that
$$
[w_1,\ldots,w_6]
\ = \
\left[
\begin{array}{cc|cc|cc}
1 & d_1-1 & 0 & d_1 & 0 & d_1 \\
0 &     1 & 1 &   0 & 0 &   1 \\
0 &     1 & 0 &   1 & 1 &   0 \\
\end{array}
\right]
$$
holds with $d_1 \in \ZZ_{\ge 0}$,
the faces $\gamma_{1,3,5},\gamma_{1,4,6},
\gamma_{1,2,3,4},\gamma_{1,2,5,6}$
of $\gamma$ are all $X$-relevant
and the ample class~$u$ of $X$ satisfies
$$
u 
\ \in \ 
\cone(w_1,w_3,d) \cap Q(\gamma_{1,4,6})^\circ,
$$
where~$d=(d_1,1,1)$ denotes the degree of 
the relation~$g$. 
Depending on~$d_1$, the situation looks 
as follows:
\begin{center}
\begin{tikzpicture}[scale=1.3]

    \coordinate (w1) at (1,0);
    \node[right] at (w1) {\small{$w_1$}};
    \node[draw,circle,inner sep=.8pt,fill=black] at (w1) {}; 
    
    \coordinate (w6) at (0,1);
    \node[above] at (w6) {\small{$w_3, w_6$}};
    \node[draw,circle,inner sep=.8pt,fill=black] at (w6) {}; 
    
    \coordinate (w2) at (-1,1);
    \node[left] at (w2) {\small{$w_2$}};
    \node[draw,circle,inner sep=.8pt,fill=black] at (w2) {}; 
    
    \coordinate (w4) at (0, 0);
    \node[below] at ($(w4)$) {\small{$w_{4},w_5$}};
    \node[draw,circle,inner sep=.8pt,fill=black] at (w4) {}; 
    
    
    \coordinate (le) at (0,-.6);
    \node[] at (le) {\small{$d_1=0$}};
    
    \coordinate (degg) at (0,.5);
    \node[left] at ($(degg)+(-.005,-0.06)$) {\tiny{$d$}};
    \node[draw,circle,inner sep=.8pt,fill=black] at (degg) {};  
  
    \draw[thin]  (w1) -- (w6);
    \draw[thin]  (w1) -- (w2);
    \draw[thin]  (w1) -- (w4);
    \draw[thin]  (w4) -- (w6);
    \draw[thin]  (w2) -- (w4);
    \draw[thin]  (w2) -- (w6);


    \coordinate (w1) at (6,0);
    \node[right] at (w1) {\small{$w_1$}};
    \node[draw,circle,inner sep=.8pt,fill=black] at (w1) {}; 
        
    \coordinate (w3) at (3,3);
    \node[above, right] at (w3) {\small{$w_3$}};
    \node[draw,circle,inner sep=.8pt,fill=black] at (w3) {}; 
    
    \coordinate (w5) at (3,0);
    \node[below] at (w5) {\small{$w_5$}};
    \node[draw,circle,inner sep=.8pt,fill=black] at (w5) {}; 
    
    \coordinate (w6) at (5,1);
    \node[above,right] at (w6) {\small{$w_6$}};
    \node[draw,circle,inner sep=.8pt,fill=black] at (w6) {}; 
    
    \coordinate (w2) at (3.5,1.25);
    \node[left, below] at (w2) {\small{$w_2$}};
    
    \coordinate (w4) at (5,0);
    \node[below] at ($(w4)$) {\small{$w_4$}};
    \node[draw,circle,inner sep=.8pt,fill=black] at (w4) {}; 
    
    \coordinate (degg) at (4.5,.75);
    \node[below] at ($(degg)+(-.05,0)$) {\tiny{$d$}};
    \node[draw,circle,inner sep=.8pt,fill=black] at (degg) {};  
    
  
    \coordinate (le) at (5,-.6);
    \node[left] at (le) {\small{$d_1> 0$}};
    
    \draw[thin]  (w1) -- (w3);
    \draw[thin]  (w1) -- (degg);
    \draw[thin]  (w1) -- (w5);
    \draw[thin]  (w4) -- (w6);
    \draw[thin, dotted]  (5,.5) -- (3,1.5);
    
    \draw [thin] (w3) -- (w4);
    
    \draw [thin] (w5) -- (w6);

\end{tikzpicture}   
\end{center}

\noindent
We claim that $n \ge 8$ holds.
Otherwise, $n=6$ and
according to Proposition~\ref{prop:ample},
we have 
$$ 
\Mov(X) 
\ \subseteq \
\cone(w_1,w_3,w_5) \cap \cone(w_2,w_4,w_6).
$$
By Lemma~\ref{lem:nosmootharea1},
this contradicts smoothness of~$X$.
Thus, we obtain $n\ge 8$, which implies 
in particular~$\dim(X)\ge 4$.

We specify the possible positions of the weights~$w_\ell$,
where $\ell = 7, \ldots, n$.
For $i = 1, \ldots, 6$ choose linear forms
$l_{i}$ on $K_\QQ$ such that
$$
l_{i}(w_i) = l_{i}(u) = 0, \quad i=1, \ldots, 6, 
$$
$$
l_{i}(w_1) > 0, \ i=3, \ldots, 6, 
\quad 
l_{2}(w_4) > 0, 
\quad
l_{1}(w_3) > 0.
$$
Each of the linear forms $l_1,\ldots, l_6$
defines a negative half space and a positive 
half space:
$$
H_i^- 
\ \sei \
\{x \in K_\QQ; \; l_i(x) \le 0\},
\qquad
H_i^+ 
\ \sei \
\{x \in K_\QQ; \; l_i(x) \ge 0\}.
$$
Note that $\gamma_{i,\ell}$ is an $\overline{X}$-face 
for all $i=1,\ldots,6$ and $\ell = 7, \ldots, n$. 
Thus, Proposition~\ref{prop:Qfact} yields
that $u$ can't lie in $Q(\gamma_{i,\ell})$.
In other words, for all $i=1,\ldots,6$ and $\ell = 7, \ldots, n$,
we have
$$ 
w_\ell 
\ \not\in \
H_i 
\ \sei \ 
\QQ u - \QQ_{\ge 0} w_i
\ \subseteq \
H_i^- \cap H_i^+.
$$
The half planes $H_1, \ldots, H_6$ define 
a subdivision of $K_\QQ = \QQ^3$ into the 
following three-dimensional cones, all having 
$\QQ u$ as a common line:
$$
\begin{array}{ccccc}
M_a \sei H_1 + H_4, 
& &
M_b \sei H_4 + H_5,
& &
M_c \sei H_5 + H_2,
\\[1ex]
M_d \sei H_2 + H_3, 
& &
M_e \sei H_3 + H_6, 
& &
M_f \sei H_6 + H_1.
\end{array}
$$
As observed before, the degrees $w_\ell$, 
where $\ell = 7, \ldots, n$, are distributed
over the relative interiors 
$M^\circ_a, \ldots, M^\circ_f$.
According to the cases $d_1=0$ and $d_1 > 0$,
the situation looks as follows.


\medskip
\noindent
$(d_1=0)$

\hspace{3cm}
\begin{tikzpicture}[scale=1.6]

    \coordinate (w1) at (1,0);
    \node[right] at (w1) {\small{$w_1$}};
    \node[draw,circle,inner sep=.8pt,fill=black] at (w1) {}; 
    
    \coordinate (w6) at (0,1);
    \node[above] at (w6) {\small{$w_3, w_6$}};
    \node[draw,circle,inner sep=.8pt,fill=black] at (w6) {}; 
    
    \coordinate (w2) at (-1,1);
    \node[left] at (w2) {\small{$w_2$}};
    \node[draw,circle,inner sep=.8pt,fill=black] at (w2) {}; 
    
    \coordinate (w4) at (0, 0);
    \node[below] at ($(w4)$) {\small{$w_{4},w_5$}};
    \node[draw,circle,inner sep=.8pt,fill=black] at (w4) {}; 
    
    \coordinate (u) at (.3,.5);
    \node[left] at ($(u)$) {\tiny{$u$}};
    \node[draw,circle,inner sep=.8pt,fill=black] at (u) {}; 
    
    \coordinate (le) at (0,-.7);
    
    \coordinate (degg) at (0,.5);
    \node[left] at ($(degg)+(-.005,-0.06)$) {\tiny{$d$}};
    \node[draw,circle,inner sep=.8pt,fill=black] at (degg) {};  
  
    \draw[thin, black!70!, dashed] (u) --  ($3*(u)-2*(w1)$);
    \fill[black!90!] ($3*(u)-2*(w1)$) node [above]  {\small{$H_{1}$}}; 
    \draw[thin, black!70!, dashed] (u) --  ($2.5*(u)-1.5*(w2)$);
    \fill[black!90!] ($2.5*(u)-1.5*(w2)$) node [right]  {\small{$H_{2}$}}; 
    \draw[thin, black!70!, dashed] (u) --  ($3.7*(u)-2.7*(w4)$);
    \fill[black!90!] ($3.7*(u)-2.7*(w4)$) node [right]  {\small{$H_4 = H_5 = M_b$}}; 
    \draw[thin, black!70!, dashed] (u) --  ($4.5*(u)-3.5*(w6)$);
    \fill[black!90!] ($4.5*(u)-3.5*(w6)$) node [left]  {\small{$H_3 = H_6 = M_e$}};     
  
    \draw[thin]  (w1) -- (w6);
    \draw[thin]  (w1) -- (w2);
    \draw[thin]  (w1) -- (w4);
    \draw[thin]  (w4) -- (w6);
    \draw[thin]  (w2) -- (w4);
    \draw[thin]  (w2) -- (w6);
    
    \coordinate (a) at (0,1.5);
    \node[] at ($(a)$) {\tiny{$M_a$}};
    \coordinate (c) at (1.25,.8);
    \node[] at ($(c)$) {\tiny{$M_c$}}; 
    \coordinate (d) at (1.25,-.4);
    \node[] at ($(d)$) {\tiny{$M_d$}};
    \coordinate (f) at (-1,0);
    \node[] at ($(f)$) {\tiny{$M_f$}}; 
\end{tikzpicture}   


\noindent
$(d_1>0)$

\hspace{1.75cm}
\begin{tikzpicture}[scale=1.6]


    \coordinate (w1) at (6,0);
    \node[right] at (w1) {\small{$w_1$}};
    \node[draw,circle,inner sep=.8pt,fill=black] at (w1) {}; 
     

   \coordinate (w3) at (3,3);
    \node[above, right] at (w3) {\small{$w_3$}};
    \node[draw,circle,inner sep=.8pt,fill=black] at (w3) {}; 
    
    \coordinate (w5) at (3,0);
    \node[below] at (w5) {\small{$w_5$}};
    \node[draw,circle,inner sep=.8pt,fill=black] at (w5) {}; 
    
    \coordinate (w6) at (5,1);
    \node[above,right] at (w6) {\small{$w_6$}};
    \node[draw,circle,inner sep=.8pt,fill=black] at (w6) {}; 
    
    \coordinate (w2) at (3.5,1.25);
    \node[left, below] at (w2) {\small{$w_2$}};
    \node[draw,circle,inner sep=.8pt,fill=black] at (w2) {}; 
    
    \coordinate (w4) at (5,0);
    \node[below] at ($(w4)$) {\small{$w_4$}};
    \node[draw,circle,inner sep=.8pt,fill=black] at (w4) {}; 

    \coordinate (degg) at (4.5,.75);

 
 \node[below] at ($(degg)+(-.05,-0.05)$) {\tiny{$d$}};
    \node[draw,circle,inner sep=.8pt,fill=black] at (degg) {};  
    
    \coordinate (u) at (5.3,.5);
    \node[left] at ($(u)$) {\tiny{$u$}};
    \node[draw,circle,inner sep=.8pt,fill=black] at (u) {}; 
  
    \coordinate (le) at (5,-.7);

    \draw[thin, black!70!, dashed] (u) --  ($5.5*(u)-4.5*(w1)$);
    \fill[black!90!] ($5.5*(u)-4.5*(w1)$) node [above]  {\small{$H_{1}$}}; 
    \draw[thin, black!70!, dashed] (u) --  ($2.5*(u)-1.5*(w2)$);
    \fill[black!90!] ($2.5*(u)-1.5*(w2)$) node [above]  {\small{$H_{2}$}}; 
    \draw[thin, black!70!, dashed](u) --  ($1.6*(u)-.6*(w3)$);
    \fill[black!90!] ($1.6*(u)-.6*(w3)$) node [right]  {\small{$H_{3}$}}; 
    \draw[thin, black!70!, dashed] (u) --  ($4*(u)-3*(w4)$);
    \fill[black!90!] ($4*(u)-3*(w4)$) node [right]  {\small{$H_{4}$}}; 
    \draw[thin, black!70!, dashed] (u) --  ($2*(u)-1*(w5)$);
    \fill[black!90!] ($2*(u)-1*(w5)$) node [right]  {\small{$H_{5}$}}; 
    \draw[thin, black!70!, dashed] (u) --  ($4.5*(u)-3.5*(w6)$);
    \fill[black!90!] ($4.5*(u)-3.5*(w6)$) node [left]  {\small{$H_{6}$}};     
       
    \draw[thin]  (w1) -- (w3);
    \draw[thin]  (w1) -- (degg);
    \draw[thin]  (w1) -- (w5);
    \draw[thin]  (w4) -- (w6);
    \draw [thin] (w3) -- (w4);
    \draw [thin] (w5) -- (w6);
    
    \draw[thin, dotted]  (degg) -- (3,1.5);
       
    \coordinate (a) at (4.5,2);
    \node[] at ($(a)$) {\tiny{$M_a$}};
    \coordinate (b) at (6,1);
    \node[] at ($(b)$) {\tiny{$M_b$}};  
    \coordinate (c) at (6.5,.4);
    \node[] at ($(c)$) {\tiny{$M_c$}}; 
    \coordinate (d) at (6.7,-.4);
    \node[] at ($(d)$) {\tiny{$M_d$}};
    \coordinate (e) at (6.15,-.6);
    \node[] at ($(e)$) {\tiny{$M_e$}}; 
    \coordinate (f) at (3.65,-.65);
    \node[] at ($(f)$) {\tiny{$M_f$}}; 
   
\end{tikzpicture}   

We show $w_\ell \not\in M_b$ for $\ell = 7, \ldots, n$.
Otherwise, $w_\ell = (x,y,z) \in M_b^\circ$ holds.
Then $\gamma_{1,\ell,5}$, $\gamma_{4,\ell,5}$,
$\gamma_{2,4,\ell}$ and $\gamma_{6,4,\ell}$
are $X$-relevant.
The way we list the indices $i,j,k$ 
for the $\gamma_{i,j,k}$ ensures 
that $\det(w_i,w_j,w_k)$ is positive and thus, 
by Proposition~\ref{prop:smooth}~(iii), equals one.
For $1,\ell,5$ this implies $y=1$.
Looking at $4,\ell,5$ yields $d_1=1$.
Taking $2,4,\ell$ gives $z=x$.
But this leads to $\det(w_6,w_4,w_\ell) = -1$;
a contradiction. 
  
We show $w_\ell \not\in M_e$ for $\ell = 7, \ldots, n$.
Otherwise, $w_\ell = (x,y,z) \in M_e^\circ$ holds
and thus $\gamma_{1,3,\ell}$, $\gamma_{6,3,\ell}$,
$\gamma_{6,2,\ell}$, $\gamma_{6,4,\ell}$
are $X$-relevant.
Again the indices are listed in a way that 
$\det(w_i,w_j,w_k) = 1$ holds.
For $1,3,\ell$ this means $z=1$.
Then $6,3,\ell$ brings us to $d_1 = 1$.
Taking $6,2,\ell$ yields $y=x$.
But then $\det(w_6,w_4,w_\ell) = -1$ holds; a 
contradiction.

Next observe that $\{w_\ell,w_{\ell+1}\} \not\subseteq M_f$
holds for all odd~$\ell \ge 7$,
because otherwise we had the $X$-relevant 
faces $\gamma_{1,3,\ell}$ and $\gamma_{1,3,\ell+1}$,
contradicting Lemma~\ref{lem:nosmootharea2}.
Hence, suitably renumbering the variables 
$T_7, \ldots, T_n$, we achieve 
$w_\ell \in M^\circ_a \cup M^\circ_c \cup M^\circ_d$
for all odd~$7 \le \ell < n$.
Thus, for a given odd $\ell \ge 7$, 
we have the following possibilities
for the positions of the pair $w_\ell,w_{\ell+1}$:
\begin{center}
\renewcommand{\arraystretch}{1.8} 
\begin{tabular}{c|c|c|l}
Case 
& 
Pos.~of~$w_\ell$
&
Pos.~of~$w_{\ell+1}$
&
Resulting $X$-relevant faces
\\
\hline
(1)
&
$M^\circ_a$
&
$M^\circ_d$
& 
$\gamma_{1,\ell,5}$, $\gamma_{3, 5, \ell+1}$, $\gamma_{6,4,\ell+1}$
\\
\hline
(2)
&
$M^\circ_a$
&
$M^\circ_f$
& 
$\gamma_{1,\ell,4}$, $\gamma_{1,6,\ell+1}$
\\
\hline
(3)
&
$M^\circ_c$
&
$M^\circ_f$
& 
$\gamma_{\ell,3,5}$, $\gamma_{1,3,\ell+1}$, $\gamma_{\ell,2,4}$
\\
\hline
(4)
&
$M^\circ_d$
&
$M^\circ_f$
& 
$\gamma_{\ell,3,5}$, $\gamma_{1,6,\ell+1}$, 
$\gamma_{\ell,6,4}$, $\gamma_{\ell,3,2}$,
$\gamma_{\ell+1,3,1}$
\end{tabular}
\end{center}
Here, the position of $w_{\ell+1}$ is determined 
by $w_\ell + w_{\ell +1} = \deg(g)$.
Moreover, the $\gamma_{i,j,k}$ occuring in the 
table are some but not necessarily all $X$-relevant
faces containing~$e_\ell$ or $e_{\ell+1}$ 
and the indices $i,j,k$ are listed in such a 
manner that $\det(w_i,w_j,w_k) = 1$ holds.
We now discuss each of these cases.

\medskip
\noindent
\emph{Case~(1)}. 
Write $w_\ell = (x,y,z)$.
Then $\det(w_1,w_\ell,w_5) = 1$
implies $y = 1$
and $\det(w_3,w_5,w_{\ell+1}) = 1$
gives $x = d_1-1$.
Moreover, $\det(w_6,w_4,w_{\ell+1}) = 1$
leads to $z = 1$.
Thus, we arrive at~$w_1=w_{\ell+1}$, $w_2=w_\ell$,
which contradicts $w_\ell \in M_a^\circ$.

\medskip
\noindent
\emph{Case (2)}. 
Write $w_\ell = (x,y,z)$.
Then $\det(w_1,w_\ell,w_4) = 1$
implies $y = 1$.
From $\det(w_1,w_6,w_{\ell+1})$ we 
derive $z=0$.
Thus, we obtain
$$
[w_1,w_2,w_3,w_4,w_5,w_6,w_\ell,w_{\ell+1}]
 = \
\left[
\begin{array}{cc|cc|cc|cc}
1 & d_1-1    & 0 & d_1   & 0 & d_1    & x  & d_1-x  \\
0 & 1     & 1 & 0   & 0 & 1    & 1  & 0   \\
0 & 1     & 0 & 1   & 1 & 0    & 0     & 1
\end{array}
\right].
$$
The weights are arranged as follows,
where~$w_2$ lies on the dotted line,
$w_\ell$ on the thickly dotted line
and $w_{\ell+1}$ on the dash-dotted line.
\begin{center}
\begin{tikzpicture}[scale=1.6]

    \coordinate (w1) at (6,0);
    \node[right] at (w1) {\small{$w_1$}};
    \node[draw,circle,inner sep=.8pt,fill=black] at (w1) {}; 
        
    \coordinate (w3) at (4,2);
    \node[above, right] at (w3) {\small{$w_3$}};
    \node[draw,circle,inner sep=.8pt,fill=black] at (w3) {}; 
    
    \coordinate (w5) at (4,0);
    \node[below] at (w5) {\small{$w_5$}};
    \node[draw,circle,inner sep=.8pt,fill=black] at (w5) {}; 
    
    \coordinate (w6) at (5,1);
    \node[above,right] at (w6) {\small{$w_6$}};
    \node[draw,circle,inner sep=.8pt,fill=black] at (w6) {}; 
    
    \coordinate (w2) at (3.5,1.25);
    \node[left, below] at (w2) {\small{$w_2$}};
    
    \coordinate (w4) at (5,0);
    \node[below] at ($(w4)$) {\small{$w_4$}};
    \node[draw,circle,inner sep=.8pt,fill=black] at (w4) {}; 
    
    \coordinate (degg) at ($(4+2/3 , 2/3)$);
    \node[below] at ($(degg)+(-.05,-0.05)$) {\tiny{$d$}};
    \node[draw,circle,inner sep=.8pt,fill=black] at (degg) {};  
    
    \coordinate (u) at (5.3,.5);
    \node[left] at ($(u)$) {\tiny{$u$}};
    \node[draw,circle,inner sep=.8pt,fill=black] at (u) {};

  \draw[thin, black!70!, dashed] (u) --  ($5.5*(u)-4.5*(w1)$);
    \fill[black!90!] ($5.5*(u)-4.5*(w1)$) node [above]  {\small{$H_{1}$}}; 

 \draw[thin, black!70!, dashed] (u) --  ($4*(u)-3*(w4)$);
    \fill[black!90!] ($4*(u)-3*(w4)$) node [right]  {\small{$H_{4}$}}; 

\draw[thin, black!70!, dashed] (u) --  ($4.5*(u)-3.5*(w6)$);
    \fill[black!90!] ($4.5*(u)-3.5*(w6)$) node [left]  {\small{$H_{6}$}};  
   \coordinate (a) at (5,2);
    \node[] at ($(a)$) {\tiny{$M_a$}};     

\coordinate (f) at (3.65,-.65);
    \node[] at ($(f)$) {\tiny{$M_f$}};

\coordinate (sell) at
(intersection cs: first line={(w4) -- (u)},
                  second line={(w1) -- (w3)});
                  
\coordinate (6u) at
(intersection cs: first line={(w6) -- (u)},
                  second line={(w1) -- (w5)});

\draw[thick, loosely dashdotted]  (6u) -- (3,0);
   \draw[thick, loosely dotted]  (sell) -- (3.3,2.7);
    \draw[thin]  (w1) -- (degg);
    \draw[thin]  (w4) -- (w6);
    \draw [thin] (w3) -- (w4);
    \draw [thin] (w5) -- (w6);
    
    \draw[thin, dotted]  (degg) -- (3,1.5);
          
\end{tikzpicture}   
\end{center}
We remark that $n=8$ with $w_7,w_8$ as in Case~(2)
is not possible. 
Indeed, otherwise, we have  
$u \not\in \cone(w_2,\ldots,w_6,w_7,w_8)$,
where by Proposition~\ref{prop:ample}, the latter 
cone contains the moving cone $X$ and thus~$u$; 
a contradiction.
Moreover, we note that for any $w_\ell,w_{\ell+1}$ 
of Case~(2), we have
$$ 
\SAmple(X)
\ \subseteq \ 
Q(\gamma_{1,6,4})
\cap 
Q(\gamma_{1,\ell,4})
\cap 
Q(\gamma_{1,6,\ell+1}).
$$

\medskip
\noindent
\emph{Case (3)}.
Write $w_\ell = (x,y,z)$.
Then $\det(w_\ell,w_3,w_5)=1$ shows $x=1$.
Moreover, $\det(w_\ell,w_2,w_4)=1$ implies 
$y=d_1z$.
Finally,  $\det(w_1,w_3,w_{\ell+1})=1$
leads to $z=0$.
We arrive at $w_\ell = w_1$ and thus 
$w_\ell\notin M_c^\circ$,
a contradiction.

\medskip
\noindent
\emph{Case~(4)}. 
Write $w_\ell = (x,y,z)$.
Then $\det(w_\ell,w_3,w_5)=1$ and
$\det(w_1,w_6,w_{\ell+1})=1$
show $x=1$ and $z=0$.
Now $\det(w_\ell,w_6,w_4)=1$
yields~$d_1y=0$.
We distinguish the cases 
$d_1 = 0$ and $d_1 > 0$.

\medskip
\noindent
\emph{Case (4.1)}: We have $d_1 = 0$.
Here, we have the following situation, 
where, in the figure, $w_\ell = (1,y,0)$ 
lies on the dotted line
and $w_{\ell+1}$ on the dash-dotted line.
$$
[w_1,w_2,w_3,w_4,w_5,w_6,w_\ell,w_{\ell+1}]
\ = \
\left[
\begin{array}{cc|cc|cc|cc}
1 & -1    & 0 & 0   & 0 & 0    & 1  & -1  \\
0 & 1     & 1 & 0   & 0 & 1    & y  & 1-y   \\
0 & 1     & 0 & 1   & 1 & 0    & 0  & 1
\end{array}
\right],
$$
\begin{center}
\begin{tikzpicture}[scale=1.3]

    \coordinate (w1) at (1,0);
    \node[right] at (w1) {\small{$w_1$}};
    \node[draw,circle,inner sep=.8pt,fill=black] at (w1) {}; 
    
    \coordinate (w6) at (0,1);
    \node[above] at (w6) {\small{$w_3, w_6$}};
    \node[draw,circle,inner sep=.8pt,fill=black] at (w6) {}; 
    
    \coordinate (w2) at (-1,1);
    \node[above] at (w2) {\small{$w_2$}};
    \node[draw,circle,inner sep=.8pt,fill=black] at (w2) {}; 
    
    \coordinate (w4) at (0, 0);
    \node[below] at ($(w4)+(-.2,0)$) {\small{$w_{4},w_5$}};
    \node[draw,circle,inner sep=.8pt,fill=black] at (w4) {};

    \coordinate (degg) at (0,.5);
    \node[left] at ($(degg)+(-.005,-0.06)$) {\tiny{$d$}};
    \node[draw,circle,inner sep=.8pt,fill=black] at (degg) {};

    \draw[thin]  (w1) -- (w2);
    \draw[thin]  (w1) -- (w4);
    \draw[thin]  (w4) -- (w6);
    \draw[thin]  (w2) -- (w4);

\coordinate (u) at (.3,.5);
    \node[left] at ($(u)$) {\tiny{$u$}};
    \node[draw,circle,inner sep=.8pt,fill=black] at (u) {}; 

    \draw[thin, black!70!, dashed] (u) --  ($4*(u)-3*(w1)$);
    \fill[black!90!] ($4*(u)-3*(w1)$) node [above]  {\small{$H_{1}$}}; 
    \draw[thin, black!70!, dashed] (u) --  ($2.8*(u)-1.8*(w2)$);
    \fill[black!90!] ($2.8*(u)-1.8*(w2)$) node [right]  {\small{$H_{2}$}}; 
  
    \draw[thin, black!70!, dashed] (u) --  ($4.5*(u)-3.5*(w6)$);
    \fill[black!90!] ($4.5*(u)-3.5*(w6)$) node [left]  {\small{$H_3 = H_6 = M_e$}};

    
    \coordinate (f) at (-1.2,-.5);
    \node[] at ($(f)$) {\tiny{$M_f$}};

\coordinate (2u) at
(intersection cs: first line={(w2) -- (u)},
                  second line={(w1) -- (w6)});
\draw[thick, loosely dotted]  (2u) -- (2.5,-1.5);

\coordinate (1u) at
(intersection cs: first line={(w1) -- (u)},
                  second line={(w2) -- (w6)});
\draw[thick, loosely dashdotted]  (1u) -- (-2.5,1);
    
\node[left] at (-2.5,1) {\small{$w_{\ell+1}$}};               
\node[right] at (2.5,-1.5){\small{$w_\ell$}};                    
            
\end{tikzpicture}  
\end{center}
Applying Proposition~\ref{prop:ample} 
to the resulting $X$-relevant faces 
of the present case, we arrive at
$$
\SAmple(X)
\ \subseteq \ 
Q(\gamma_{1,3,5})  
\cap 
Q(\gamma_{\ell,3,5})
 \cap 
Q(\gamma_{\ell,3,2}) 
\cap 
Q(\gamma_{\ell+1,3,1}).
$$

\medskip
\noindent
\emph{Case (4.2)}: We have $d_1 > 0$.
Then~$y=0$ must hold.  
This implies $w_\ell = w_1$ and $w_{\ell+1} = w_2$.
For the semiample cone, we have 
$$ 
\SAmple(X)
\ \subseteq \
Q(\gamma_{1,4,6}) \cap Q(\gamma_{1,2,6}).
$$
Moreover, the weights are arranged as in the 
figure below, where $w_2=w_{\ell+1}$ lies on the 
dotted line.
\begin{center}
\begin{tikzpicture}[scale=1.6]

    \coordinate (w1) at (6,0);
    \node[right] at (w1) {\small{$w_1=w_\ell$}};
    \node[draw,circle,inner sep=.8pt,fill=black] at (w1) {}; 
        
    \coordinate (w3) at (4,2);
    \node[above, right] at (w3) {\small{$w_3$}};
    \node[draw,circle,inner sep=.8pt,fill=black] at (w3) {}; 
    
    \coordinate (w5) at (4,0);
    \node[below] at (w5) {\small{$w_5$}};
    \node[draw,circle,inner sep=.8pt,fill=black] at (w5) {}; 
    
    \coordinate (w6) at (5,1);
    \node[above,right] at (w6) {\small{$w_6$}};
    \node[draw,circle,inner sep=.8pt,fill=black] at (w6) {}; 
    
    \coordinate (w2) at (3.5,1.25);
    \node[left] at (3.2,1.6) {\small{$w_2=w_{\ell+1}$}};
    
    \coordinate (w4) at (5,0);
    \node[below] at ($(w4)$) {\small{$w_4$}};
    \node[draw,circle,inner sep=.8pt,fill=black] at (w4) {}; 
    
    \coordinate (degg) at ($(4+2/3 , 2/3)$);
    \node[below] at ($(degg)+(-.05,-0.05)$) {\tiny{$d$}};
    \node[draw,circle,inner sep=.8pt,fill=black] at (degg) {};  
    
    \coordinate (u) at (5.3,.5);
    \node[left] at ($(u)$) {\tiny{$u$}};
    \node[draw,circle,inner sep=.8pt,fill=black] at (u) {}; 


    \draw[thin]  (w1) -- (4,1);
    \draw[thin]  (w1) -- (w5);
    \draw[thin]  (w4) -- (w6);
    \draw[thin]  (w3) -- (w5);
    \draw [thin] (w3) -- (w4);
    \draw [thin] (w5) -- (w6);
    \draw [thin] (w1) -- (w3);
    
    \draw[thin, dotted]  (4,1) -- (3,1.5);
          
\end{tikzpicture}   
\end{center}

\medskip

Subsuming the discussion so far, 
we see that only the Cases~(2) and~(4) 
allow weights $w_i$, where $i \ge 7$.
The remaining task is to check 
in which ways these cases can be combined.
So, let us go through the possible 
constellations of the pairs $w_i,w_{i+1}$ 
for $i = 7, 9, \ldots, n-1$.

\medskip
\noindent
\emph{All pairs $w_i,w_{i+1}$, where $i = 7, 9, \ldots, n-1$,
are from Case~(2)}.
In the discussion of Case~(2) we have seen that 
$n \ge 10$ must hold.
For odd $i \ge 7$, we have 
$w_i = (x_i,1,0)$ and $w_{i+1}=(d_1-x_i,0,1)$,
where we may assume
$x_7 \ge x_9 \ge \ldots \ge x_{n-1}$.
Now, the ample class $u$ lies in the moving 
cone.
Proposition~\ref{prop:ample} yields 
$$ 
u \in \cone(w_2,w_3, \ldots,w_{n-1},w_n)^\circ.
$$
We conclude $w_7 \in Q(\gamma_{1,6})^\circ$ 
and $w_{n} \in Q(\gamma_{1,4})^\circ$.
This in turn implies $x_7 > d_1$ and 
$x_{n-1} < 0$.
Moreover, $Q(\gamma_{7,n})$ is a bounding face 
of the moving cone.
Thus, we obtain
$$ 
u 
\ \in \ 
Q(\gamma_{1,7,4})^\circ \cap Q(\gamma_{1,6,n})^\circ
\setminus
Q(\gamma_{1,7,n})
\ \subseteq \ 
Q(\gamma_{7,2,n})^\circ.
$$
We conlude that $\gamma_{7,2,n}$ is  $X$-relevant.
Applying Proposition~\ref{prop:smooth}~(iii)
and the estimates for $x_7$ and $x_n$ just 
obtained, we arrive at a contradiction,
showing that the present setting can't occur:
$$ 
1 
\ = \ 
\det(w_7,w_2,w_n) 
\ = \ 
d_1 - x_{n-1} + x_7 - d_1 +1
\ \ge \ 2.
$$

\medskip
\noindent
\emph{There is an even $7 < k \le n$ such that 
for $i = 7, \ldots, k-1$, the pairs 
$w_i,w_{i+1}$ are from Case~(4.1) and for all odd
$j = k+1, \ldots n-1$, we have $w_j=w_3$ and $w_{j+1} = w_4$}.
Then, for the odd $i = 7, \ldots, k-1$,
we have $w_i = (1,y_i,0)$ and $w_{i+1} = (-1,1-y_{i},1)$,
where we may assume $y_7 \ge \ldots \ge y_{k-1}$.
Set
$$
\alpha \sei \max(0, y_7), 
\quad
\tilde{w}_1 \sei (1,\alpha,0),
\qquad
\beta\sei\min(0, y_{k-1}),
\quad
\tilde{w}_{k} \sei (-1,1-\beta,1).
$$ 
Then $\tilde{w}_1$ and $\tilde{w}_{k}$ are 
degrees of variables and they are closest
to $w_3$ in the sense that 
$\tilde{w}_1 \in \cone(w_3,w_i)$ 
and
$\tilde{w}_k \in \cone(w_3,w_{i+1})$ 
holds for $i = 1$ and $i = 7, \ldots, k-1$;
see the figure in Case~(4.1).
Recall that the semiample cone is 
contained in the intersection of
$Q(\gamma_{1,3,5})$ and $\cone(d,w_1,w_3)$.
We even claim 
$$ 
\SAmple(X) 
\ = \ 
\cone(\tilde{w}_1,w_3,w_5) \cap \cone(\tilde{w}_1,w_3,\tilde{w}_{k}).
$$
By the definition of $\tilde w_1$ and $\tilde w_k$, 
we only have to show that both cones are images 
of $X$-relevant faces.
For the first one this is clear.
We discuss the second one.
Observe that we have 
$$ 
u 
\ \in \ 
\cone(\tilde{w}_1,w_3,\tilde{w}_{k})
\cup 
\cone(\tilde{w}_1,w_4,\tilde{w}_{k})
$$ 
Thus, according to Proposition~\ref{prop:Qfact},
the task is to show that 
$\cone(\tilde{w}_1,w_4,\tilde{w}_{k})$
is not the image of an $X$-relevant face.
Indeed, this would contradict 
Lemma~\ref{lem:nosmootharea1}
applied to an $X$-relevant 
face projecting onto 
$$
\cone(\tilde{w}_2,w_3,\tilde{w}_{k-1}),
\qquad
\tilde{w}_2 \sei (-1,1-\alpha,1),
\quad
\tilde{w}_{k-1} \sei (1,\beta,0).
$$
Now, the coordinate change on $K = \ZZ^3$
given by the following unimodular matrix
and suitably renumbering of variables
leads to the setting of 
Theorem~\ref{thm:fullpic3}:
$$
\left[
\begin{array}{ccc}
1 & 0 & 1  \\
-\beta & 1  & \alpha-\beta-1   \\
0 & 0  & 1
\end{array}
\right]. 
$$

\medskip
\noindent
\emph{There is an even $7 < k < n$ such that 
for $i = 7, \ldots, k-1$, the pairs 
$w_i,w_{i+1}$ are from Case~(4.1) and for all odd
$j = k+1, \ldots ,n-1$ the pair $w_j,w_{j+1}$ is 
from Case~(2)}.
Note that we have $d_1 = 0$ and the weights are of the form
$$
w_i = (1,y_i,0),
\quad
w_{i+1} = (-1,1-y_i,1),
\qquad
w_j = (x_j,1,0),
\quad
w_{j+1} = (-x_j,0,1),
$$
where we may assume $y_7 \ge \ldots \ge y_{k-1}$ 
and $x_{k+1} \ge \ldots \ge x_{n-1}$.
The weights are arranged as follows,
where the $w_i$ for $i = 7, \ldots, k-1$ 
and the $w_j$ for $j = k+1, \ldots, n-1$
lie on the dotted line:
\begin{center}
\begin{tikzpicture}[scale=1.3]

    \coordinate (w1) at (1,0);
    \node[right] at (w1) {\small{$w_1$}};
    \node[draw,circle,inner sep=.8pt,fill=black] at (w1) {}; 
    
    \coordinate (w6) at (0,1);
    \node[above,right] at (w6) {\small{$w_3, w_6$}};
    \node[draw,circle,inner sep=.8pt,fill=black] at (w6) {}; 
    
    \coordinate (w2) at (-1,1);
    \node[left] at (w2) {\small{$w_2$}};
    \node[draw,circle,inner sep=.8pt,fill=black] at (w2) {}; 
    
    \coordinate (w4) at (0, 0);
    \node[below] at ($(w4)+(-.2,0)$) {\small{$w_{4},w_5$}};
    \node[draw,circle,inner sep=.8pt,fill=black] at (w4) {};
        

    \coordinate (degg) at (0,.5);
    \node[left] at ($(degg)-(.05,0.05)$) {\tiny{$d$}};
    \node[draw,circle,inner sep=.8pt,fill=black] at (degg) {};  
  
   
    \draw[thin]  (w1) -- (w2);
    \draw[thin]  (w1) -- (w4);
    \draw[thin]  (w4) -- (w6);
    \draw[thin]  (w2) -- (w4);
    \draw[thin]  (w2) -- (w6);
    \draw[thin, dotted] (-1,2) -- (2,-1);
        
\end{tikzpicture}  
\end{center}
The discussion on the Cases~(4.1) and~(2) 
performed so far shows that for 
all odd $i = 7, \ldots, k-1$ and 
$j = k+1, \ldots, n-1$, the semiample cone 
of $X$ satisfies
$$
\SAmple(X) 
\ \subseteq \ 
Q(\gamma_{3,5,i})
\cap 
Q(\gamma_{1,4,j})
\ \subseteq \ 
Q(\gamma_{5,i,j}).
$$
Since the semiample cone is full-dimensional, 
we see that~$\gamma_{5,i,j}$ is $X$-relevant.
Thus, we may apply Proposition~\ref{prop:smooth}
and obtain~
$$
1
\ = \ 
\det(w_5,w_i,w_j)
\ = \ 
1-y_ix_j.
$$
This leaves us with $y_i=0$ for $i = 7, \ldots, k-1$
or $x_j = 0$ for $j = k+1, \ldots, n-1$.
If all the $x_j$ vanish, then we are in the case just 
treated.
So, assume that all the $y_i$ vanish.
Then we have $w_i = w_1$ and $w_{i+1} = w_2$ for 
$i = 7, \ldots, k-1$.
Set 
$$
\alpha \sei \max(0,x_{k+1}),
\quad 
\tilde w_{3}  \sei (\alpha,1,0),
\qquad
\beta \sei \min(0,x_{n-1}),
\quad 
\tilde w_{n}  \sei (-\beta,0,1).
$$ 
Then $\tilde w_3$ and $\tilde w_n$ are 
the degrees of the variables sitting closest 
to $w_1$ and among the $w_j$ and $w_{j+1}$ 
with $j=3$ or $j = k+1,\ldots, n-1$.
We claim that the semiample cone 
is given by
$$ 
\SAmple(X)
\ = \ 
\cone(w_1,\tilde w_3,w_2)
\cap
\cone(w_1,\tilde w_3, \tilde w_n).
$$
As in the preceding case,
we only have to show that both cones 
are projected $X$-relevant faces.
For the first one this is clear.
We turn to the second one.
For sure we have
$$ 
u 
\ \in \ 
\cone(w_1,\tilde{w}_3,\tilde{w}_{n})
\cup 
\cone(\tilde{w}_3,w_2,\tilde{w}_{n}) .
$$ 
We verify
$u \not\in \cone(\tilde{w}_3,w_2,\tilde{w}_{n})$.
Otherwise, because of
$\det(\tilde{w}_3,w_2,\tilde{w}_{n}) = 1 + \alpha - \beta$,
Proposition~\ref{prop:smooth}~(iii) 
yields $\alpha = \beta = 0$; a contradiction.
Thus, $\cone(w_1,\tilde w_3, \tilde w_n)$ 
is the image of an $X$-relevant face. 
%
Now, the coordinate change on $K = \ZZ^3$ given 
by the following unimodular matrix 
and suitably renumbering of variables
leads to the setting of Theorem~\ref{thm:fullpic3}:
$$
\left[
\begin{array}{ccc}
0 & 1 & 0  \\
1 & -\beta & \alpha    \\
0 & 0  & 1
\end{array}
\right]. 
$$

\medskip
\noindent
\emph{There is an even $7 < k < n$ such that 
for $i = 7, \ldots, k-1$, the pairs 
$w_i,w_{i+1}$ are from Case~(4.2) and for all odd
$j = k+1, \ldots n-1$ the pair $w_j,w_{j+1}$ is 
from Case~(2)}.
This means 
$$
w_i = (1,0,0),
\quad
w_{i+1} = (d_1-1,1,1),
\qquad
w_j = (x_j,1,0),
\quad
w_{j+1} = (d_1-x_j,0,1),
$$
A coordinate change on $K = \ZZ^3$ given by
the following unimodular matrix 
and suitably renumbering of variables
leads to the preceding case:
$$
\left[
\begin{array}{ccc}
0 & 1 &-1  \\
1 & 0 & 1-d_1 \\
0 & 0 & 1  
\end{array}
\right].
$$
\end{proof}

\begin{proof}[Proof of Theorem~\ref{thm:fullpic3}, Part~II]
Let~$X$ arise from Construction~\ref{constr:intquad} 
with the input data specified in Theorem~\ref{thm:fullpic3}.
Consider the toric embedding $X \subseteq Z$ 
provided by Construction~\ref{constr:intquad}.
From Remark~\ref{rem:Xfaces}, we infer
$\tau_X = \tau_Z$ for the semiample cones. 
Thus, for the divisor class $w = (1,a+1,1)$,
a representing toric divisor $E$ on $Z$ 
and its restriction $D$ on $X$, we obtain 
a commutative diagram 
$$ 
\xymatrix{
X
\ar@{}[r]|\subseteq
\ar[d]_{\varphi_D}
&
Z
\ar[d]^{\varphi_E}
\\
X(D)
\ar@{}[r]|\subseteq
&
Z(E)
\ar@{}[r]|{\hspace*{-2cm}\cong}
& 
{\PP(\mathcal{O}_{\PP_{l-1}}(b_1) \oplus \ldots \oplus \mathcal{O}_{\PP_{l-1}}(b_k)),}
}
$$
where the inclusions are closed embeddings,
$l$ is the number of coordinates of 
$\overline{Z} = \KK^{n}$ of degree
$w_1=(0,1,0)$, call them $f_1, \ldots, f_l$,
and $k$ is the number of coordinates
whose degree is located on the line segment
$\cone(w_5,w_8)$, 
call them $h_1, \ldots, h_k$.
Then we have $n = 2l+2k$
and $\deg(h_i) = (1,b_i,0)$
with $0=b_1\le b_2\le\ldots\le b_k=a$.
Using local trivializations, 
we see that $X$ projects onto the base
$Z(E)$, which means $X(D) = Z(E)$.
Moreover, on each fiber $\varphi_E^{-1}([z])$, 
the relation~$g$ becomes a linear form 
in the coordinates~$T_i$ different from $f_i$ and $h_j$
and thus cuts out a hyperplane of 
$\varphi_E^{-1}([z]) \cong \PP_{l+k-1}$.
Consequently $\varphi_D \colon X \to X(D)$ 
is as claimed.
\end{proof}

\begin{corollary}
\label{cor:fujitapic3full}
Let~$X$ be a Fano smooth full intrinsic quadric
of Picard number three.
Then every numerically effective divisor 
on~$X$ is base point free.
In particular,~$X$ fulfills Fujita's freeness 
conjecture.
\end{corollary}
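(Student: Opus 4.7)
The plan is to mirror the proof of Corollaries~\ref{cor:bpfsat} and~\ref{cor:bpfquadricssatrhoX2}: we first show every numerically effective divisor on $X$ is base point free, from which Fujita's freeness conjecture follows verbatim as in the Picard number two case, by Fujita's result~\cite{Fu}*{Thm.~1} that $C_X + sD$ is numerically effective when $s \geq \dim X + 1$ and $D$ is ample.

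By \cite{ArDeHaLa}*{Prop.~3.3.2.8},
$$
\BPF(X) \ = \ \bigcap_{\gamma_0 \in \cov(X)} Q(\gamma_0 \cap \ZZ^n) \ \subseteq \ \Cl(X),
$$
and the cone it generates in $\Cl_\QQ(X)$ is $\SAmple(X)$, which coincides with the nef cone for the Mori dream space $X$. The corollary therefore reduces to showing that $\BPF(X)$ is saturated in $\Cl(X) = \ZZ^3$; and since intersections of saturated submonoids are saturated, it suffices to establish saturation of each individual $Q(\gamma_0 \cap \ZZ^n)$ for $\gamma_0 \in \cov(X)$.

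To identify $\cov(X)$ I combine Theorem~\ref{thm:fullpic3} with Remark~\ref{rem:Xfaces}. The fullness hypotheses $t = m = 0$ restrict the $\overline{X}$-faces to those contained in a type~(iv) face $\gamma_{i_1, \ldots, i_{n/2}}$ (one index from each pair $\{2l-1, 2l\}$), or those containing a type~(i) face $\gamma_{i, i+1, j, j+1}$. Together with the requirement $\dim Q(\gamma_0) = 3$ coming from $\QQ$-factoriality via Proposition~\ref{prop:Qfact}, this splits $\cov(X)$ into two classes: three-dimensional simplicial subfaces $\gamma_{i,j,k}$ of type~(iv) whose weights are linearly independent, and four-dimensional type~(i) faces $\gamma_{i, i+1, j, j+1}$ whose three-dimensional subfaces fail to be $\overline{X}$-faces. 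For the first class, Proposition~\ref{prop:smooth}(v) applied to smoothness of $X$ yields that $w_i, w_j, w_k$ form a $\ZZ$-basis of $\Cl(X)$, so $Q(\gamma_0 \cap \ZZ^n)$ is the standard positive orthant in those basis coordinates and is trivially saturated in $\ZZ^3$.

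The main obstacle is the four-dimensional case $\gamma_0 = \gamma_{i, i+1, j, j+1}$. Using the single relation $w_i + w_{i+1} = w_j + w_{j+1} = \mu$, the integer system $\sum_\ell \alpha_\ell w_\ell = w$ with nonnegative integer unknowns reduces, upon eliminating three of the four variables in terms of a chosen one, to an interval constraint $\alpha \in [L(w), U(w)] \cap \ZZ$ with integer-valued endpoints $L, U$ that are linear functions of the coordinates of $w$. This interval is nonempty precisely when the defining inequalities of $Q(\gamma_0)$ are met; since all data are integral it then automatically contains an integer, which provides the desired preimage in $\gamma_0 \cap \ZZ^n$. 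The explicit weight patterns from Theorem~\ref{thm:fullpic3} --- with $w_i, w_{i+1}$ for $i \geq 9$ lying either in $\{w_1, w_2\}$ or on the segments $\conv(w_5, w_8)$ and $\conv(w_6, w_7)$ --- bound the configurations to check to finitely many cases. Once saturation is verified for each $\gamma_0 \in \cov(X)$, the corollary follows exactly as in Corollary~\ref{cor:bpfquadricssatrhoX2}.
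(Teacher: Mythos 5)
Your proposal is correct and follows essentially the same route as the paper: reduce to saturation of $\BPF(X)=\bigcap_{\gamma_0\in\cov(X)}Q(\gamma_0\cap\ZZ^n)$, dispose of the three-dimensional faces of $\cov(X)$ via the $\ZZ$-basis property coming from smoothness, and handle the remaining four-dimensional faces $\gamma_{i,i+1,j,j+1}$ by a direct check using the explicit weights $w_i=(0,1,0)$, $w_{i+1}=(1,a-1,1)$, $w_j=(0,b,1)$, $w_{j+1}=(1,a-b,0)$ from Theorem~\ref{thm:fullpic3}. Your interval argument for the four-dimensional case is exactly the computation the paper compresses into ``one directly checks'' (its validity rests on any three of these four weights forming a $\ZZ$-basis, which the explicit coordinates confirm), so the two proofs coincide in substance.
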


\begin{proof}
We may assume that $X$ arises 
from Construction~\ref{constr:intquad}
with the input data specified in 
Theorem~\ref{thm:fullpic3}.
As in the proof of Corollaries~\ref{cor:bpfsat}
and~\ref{cor:bpfquadricssatrhoX2}, 
we consider the monoid of base 
point free divisor classes and its 
combinatorial description:
$$ 
\BPF(X) 
\ = \ 
\bigcap_{\gamma_0 \in \cov(X)} Q(\gamma_0 \cap \ZZ^{n})
$$
Again the task is to show that for all~$\gamma_0\in\cov(X)$,
the monoid~$Q(\gamma_0\cap E)\subseteq Q(\lin(\gamma_0)\cap E)$
is saturated.
For three-dimensional~$\gamma_0$, 
this is due to Proposition~\ref{prop:smooth}~(iii).
If $\gamma_0\in\cov(X)$
is not three-dimensional,
then we have~$\gamma_0 = \gamma_{i,i+1,j,j+1}$ 
with
$w_i=(0,1,0)$, $w_{i+1}=(1,a-1,1)$ 
and
$w_j=(0,b,1)$, $w_{j+1}=(1,a-b,0)$
for some $0\le b\le a$.
One directly checks that the corresponding monoid
$Q(\gamma_0\cap E)\subseteq Q(\lin(\gamma_0)\cap E)$
is saturated.
\end{proof}


\begin{bibdiv}
\begin{biblist}

\bib{AlIl}{article}{
    author = {Altmann, Klaus},
    author =  {Ilten, Nathan},
     TITLE = {Fujita's freeness conjecture for $T$-varieties of complexity one},
      eprint = {arXiv:1712.09927},
}

\bib{ACO}{article}{
    AUTHOR = {Andreatta, Marco},
    author =  {Chierici, Elena},
    author = {Occhetta, Gianluca},
     TITLE = {Generalized {M}ukai conjecture for special {F}ano varieties},
   JOURNAL = {Cent. Eur. J. Math.},
    VOLUME = {2},
      YEAR = {2004},
    NUMBER = {2},
     PAGES = {272--293},
}
\bib{ArDeHaLa}{book}{
   author={Arzhantsev, Ivan},
   author={Derenthal, Ulrich},
   author={Hausen, J\"urgen},
   author={Laface, Antonio},
   title={Cox rings},
   series={Cambridge Studies in Advanced Mathematics},
   volume={144},
   publisher={Cambridge University Press, Cambridge},
   date={2015},
   pages={viii+530},
   isbn={978-1-107-02462-5},
}

\bib{Ba}{article}{
   author={Batyrev, Victor V.},
   title={On the classification of smooth projective toric varieties},
   journal={Tohoku Math. J. (2)},
   volume={43},
   date={1991},
   number={4},
   pages={569--585},
}

\bib{BeHa:2007}{article}{
   author={Berchtold, Florian},
   author={Hausen, J\"urgen},
   title={Cox rings and combinatorics},
   journal={Trans. Amer. Math. Soc.},
   volume={359},
   date={2007},
   number={3},
   pages={1205--1252},
}

\bib{bocadedr}{article}{
   author={Bonavero, Laurent},
   author={Casagrande, Cinzia},
   author={Debarre, Olivier},
   author={Druel, Stephane},
   title={Sur une conjecture de Mukai},
   journal={Comment.~Math.~Helv.},
   volume={78},
   date={2003},
   pages={601--626},
}

\bib{Bo2}{article}{
   author={Bourqui, David},
   title={La conjecture de Manin g\'eom\'etrique pour une famille de quadriques
   intrins\`eques},
   language={French, with English and French summaries},
   journal={Manuscripta Math.},
   volume={135},
   date={2011},
   number={1-2},
   pages={1--41},
}

\bib{Cas}{article}{
    AUTHOR = {Casagrande, Cinzia},
     TITLE = {The number of vertices of a {F}ano polytope},
   JOURNAL = {Ann. Inst. Fourier (Grenoble)},
    VOLUME = {56},
      YEAR = {2006},
    NUMBER = {1},
     PAGES = {121--130},
}

\bib{Cox}{article}{
   author={Cox, David A.},
   title={The homogeneous coordinate ring of a toric variety},
   journal={J. Algebraic Geom.},
   volume={4},
   date={1995},
   number={1},
   pages={17--50},
}

\bib{Ca}{article}{
   author = {Casagrande, Cinzia},
   title = {On the birational geometry of Fano 4-folds},
   journal = {Math. Ann.},
   date={2013},
   number={355},
   pages={585--628},
}

\bib{EL1}{article}{
    AUTHOR = {Ein, Lawrence},
    author = {Lazarsfeld, Robert},
     TITLE = {Global generation of pluricanonical and adjoint linear series
              on smooth projective threefolds},
   JOURNAL = {J. Amer. Math. Soc.},
    VOLUME = {6},
      YEAR = {1993},
    NUMBER = {4},
     PAGES = {875--903},
}

\bib{Fa}{article}{
   author = {Fahrner, Anne},
   title = {Smooth Mori dream spaces of small Picard number},
   journal = {Doctoral Dissertation, Universit\"at T\"ubingen},
   date={2017},
   eprint = {https://publikationen.uni-tuebingen.de}
}

\bib{FaHaNi}{article}{
   author = {Fahrner, Anne},
   author = {Hausen, J\"urgen},
   author = {Nicolussi, Michele},
   title = {Smooth projective varieties with a torus action of complexity 1 and Picard number 2},
   journal = {to appear in  Ann. Sc. Norm. Super. Pisa Cl. Sci.},
   eprint = {arXiv:1602.04360},
}

\bib{fujino}{article}{
    AUTHOR = {Fujino, Osamu},
     TITLE = {Notes on toric varieties from {M}ori theoretic viewpoint},
 journal = {Tohoku Math.~J.~(2)},
    VOLUME = {55},
    number={4},
     PAGES = {551--564},
      YEAR = {2003},
}

\bib{Fu}{article}{
    AUTHOR = {Fujita, Takao},
     TITLE = {On polarized manifolds whose adjoint bundles are not
              semipositive},
 BOOKTITLE = {Algebraic geometry, {S}endai, 1985},
    SERIES = {Adv. Stud. Pure Math.},
    VOLUME = {10},
     PAGES = {167--178},
 PUBLISHER = {North-Holland, Amsterdam},
      YEAR = {1987},
}

\bib{HaHe}{article}{
   author={Hausen, J\"urgen},
   author={Herppich, Elaine},
   title={Factorially graded rings of complexity one},
   conference={
      title={Torsors, \'etale homotopy and applications to rational points}
   },
   book={
      series={London Math. Soc. Lecture Note Ser.},
      volume={405},
      publisher={Cambridge Univ. Press, Cambridge},
   },
   date={2013},
   pages={414--428},
}

\bib{HuKe}{article}{
author = {Hu, Yi},
author = {Keel, Sean},
journal = {Michigan Math. J.},
number = {1},
pages = {331--348},
title = {Mori dream spaces and GIT},
volume = {48},
year = {2000},
}

\bib{Kaw}{article}{
    AUTHOR = {Kawamata, Yujiro},
     TITLE = {On {F}ujita's freeness conjecture for {$3$}-folds and
              {$4$}-folds},
   JOURNAL = {Math. Ann.},
    VOLUME = {308},
      YEAR = {1997},
    NUMBER = {3},
     PAGES = {491--505},
}

\bib{Kl}{article}{
   author={Kleinschmidt, Peter},
   title={A classification of toric varieties with few generators},
   journal={Aequationes Math.},
   volume={35},
   date={1988},
   number={2--3},
   pages={254--266},
}

\bib{MoMu}{article}{
author = {Mori, Shigefumi},
author = {Mukai, Shigeru},
journal = {Manuscripta mathematica},
pages = {147-162},
title = {Classification of Fano 3-Folds with $B_2 \ge 2$},
volume = {36},
year = {1981},
}

\bib{Mukai}{article}{
   author={Mukai, S.},
   title={Problems on characterization of the complex projective space},
   journal={In: Birational Geometry of Algebraic
Varieties, Open Problems, Katata, August 22--27},
   date={1988},
   pages={57--60},
}

\bib{Re}{article}{
    AUTHOR = {Reider, Igor},
     TITLE = {Vector bundles of rank {$2$} and linear systems on algebraic
              surfaces},
   JOURNAL = {Ann. of Math. (2)},
    VOLUME = {127},
      YEAR = {1988},
    NUMBER = {2},
     PAGES = {309--316},
}

\bib{Wis}{article}{
   author={Wi\'sniewski, Jaros\l aw A.},
   title={On a conjecture of Mukai},
   journal={Manuscripta Math.},
   volume={68},
   date={1990},
   number={2},
   pages={135--141},
   issn={0025-2611},
   review={\MR{1063222}},
}

\bib{yezhu2}{article}{
   author = {Ye, Fei},
   author = {Zhu, Zhixian},
    title = {On Fujita's freeness conjecture in dimension 5},
   eprint = {arXiv:1511.09154},
     year = {2015},
}

\end{biblist}
\end{bibdiv}

\end{document}